\definecolor{darkblue}{rgb}{0.0,0.0,0.3}
\let\temp\phi
\let\phi\varphi
\let\varphi\temp
\DeclareMathOperator{\Aut}{Aut}
\newcommand{\C}{\mathbb{C}}
\newcommand{\N}{\mathbb{N}}
\newcommand{\R}{\mathbb{R}}
\newcommand{\calB}{\mathcal{B}}
\newcommand{\calK}{\mathcal{K}}
\newcommand{\calM}{\mathcal{M}}
\newcommand{\rmA}{\mathrm{A}}
\newcommand{\rmB}{\mathrm{B}}
\newcommand{\rmC}{\mathrm{C}}
\DeclareMathOperator{\id}{id}
\DeclareMathOperator{\ca}{\mathrm{C}^*}
\DeclareMathOperator{\camin}{\mathrm{C}_{\mathrm{min}}^*}
\DeclareMathOperator{\ucp}{UCP}
\DeclareMathOperator{\ccp}{CCP}
\DeclareMathOperator{\NCConv}{NCConv}
\DeclareMathOperator{\PoNCConv}{PoNCConv}
\DeclareMathOperator{\UnOpSys}{UnOpSys}
\DeclareMathOperator{\OpSys}{OpSys}
\newcommand{\bC}{\mathbb{C}}
\newcommand{\csharp}{$\text{C}^\sharp$\xspace}
\theoremstyle{plain}
\newtheorem{theorem}{Theorem}[section]
\newtheorem{lemma}[theorem]{Lemma}
\newtheorem{proposition}[theorem]{Proposition}
\newtheorem{corollary}[theorem]{Corollary}
\theoremstyle{definition}
\newtheorem{example}[theorem]{Example}
\newtheorem{remark}[theorem]{Remark}
\newtheorem{definition}[theorem]{Definition}
\newtheorem{introtheorem}{Theorem}
\newcommand{\Irr}{\operatorname{Irr}}
\newcommand{\Spec}{\operatorname{Spec}}
\DeclareMathOperator{\mintens}{\otimes_{\mathrm{min}}}
\renewcommand{\sb}{\overline{\partial K}}
\numberwithin{equation}{section}
\begin{document}
%%%

%%%
\title[Nonunital operator systems and nc convexity]{Nonunital operator systems and noncommutative convexity}

\author[M. Kennedy]{Matthew Kennedy}
\address{Department of Pure Mathematics\\University of Waterloo\\Waterloo, ON, N2L 3G1, Canada}
\email{matt.kennedy@uwaterloo.ca}
\author[S.J. Kim]{Se-Jin Kim}
\address{School of Mathematics and Statistics\\University of Glasgow\\University Place\\Glasgow, G12 8QQ\\United Kingdom}
\email{sejin.kim@glasgow.ac.uk}
\author[N. Manor]{Nicholas Manor}
\address{Department of Pure Mathematics\\ University of Waterloo\\Waterloo, ON, N2L 3G1, Canada}
\email{nmanor@uwaterloo.ca}

\begin{abstract}
We establish the dual equivalence of the category of (potentially non-unital) operator systems and the category of pointed compact nc (noncommutative) convex sets, extending a result of Davidson and the first author. We then apply this dual equivalence to establish a number of results about operator systems, some of which are new even in the unital setting.

For example, we show that the maximal and minimal C*-covers of an operator system can be realized in terms of the C*-algebra of continuous nc functions on its nc quasistate space, clarifying recent results of Connes and van Suijlekom. We also characterize ``C*-simple'' operator systems, i.e.\ operator systems with simple minimal C*-cover, in terms of their nc quasistate spaces.

We develop a theory of quotients of operator systems that extends the theory of quotients of unital operator algebras. In addition, we extend results of the first author and Shamovich relating to nc Choquet simplices. We show that an operator system is a C*-algebra if and only if its nc quasistate space is an nc Bauer simplex with zero as an extreme point, and we show that a second countable locally compact group has Kazhdan's property (T) if and only if for every action of the group on a C*-algebra, the set of invariant quasistates is the quasistate space of a C*-algebra.
\end{abstract}

\subjclass[2010]{}
\keywords{}
\thanks{First author supported by Canadian Natural Sciences and Engineering Research Council (NSERC) Discovery Grant number 2018-202107.}
\thanks{Second author supported by Canadian Natural Sciences and Engineering Research Council (NSERC) PGS-D Scholarship number 396162013, and the European Research Council (ERC) Grant number 817597 under the European Union's Horizon 2020 research and innovation programme.}
\thanks{Third author supported by the Canadian Natural Sciences and Engineering Research Council (NSERC) PGS-D Scholarship number 401226864.}

\maketitle

\setcounter{tocdepth}{1}
\tableofcontents
%%%

%%%
\section{Introduction}
%%%

Werner's notion of a (generalized, i.e.\ potentially nonunital) operator system is an axiomatic, representation-independent characterization of concrete operator systems, which are self-adjoint subspaces of bounded operators acting on a Hilbert space. Werner \cite{Wer2002} showed that every concrete operator system satisfies the axioms of an abstract operator system, and conversely that every abstract operator system is isomorphic to a concrete operator system, thereby generalizing an important result of Choi and Effros \cite{CE1977} for unital operator systems. 

Recently, Davidson and the first author \cite{DK2019} introduced a theory of noncommutative convex sets and noncommutative functions. A key starting point for the theory is the dual equivalence between the category of compact noncommutative convex sets and the category of closed unital operator systems. On the one hand, this equivalence allows the rich theory of operator systems and C*-algebras to be applied to problems in noncommutative convexity. On the other hand, recent results suggest that that the perspective of noncommutative convexity can also provide new insight on operator systems and C*-algebras (see e.g.\  \cites{DK2015, KS2019, DK2020}).

In this paper we will establish a similar dual equivalence between the category of operator systems in the sense of Werner and a category of objects that we call pointed noncommutative convex sets. These are certain pairs consisting of a compact noncommutative convex set along with a distinguished point in the set. We will then consider a number of applications of this equivalence.

Before stating our results, we will first briefly review some of the basic ideas from the theory of noncommutative convexity.

A compact nc (noncommutative) convex set is a graded set $K = \sqcup K_n$, where each graded component $K_n$ is an ordinary compact convex subset of the set $\calM_n(E)$ of $n \times n$ matrices over an operator space $E$, and the graded components are related by requiring that $K$ is closed under direct sums and compressions. The union is taken over all $n \leq \kappa$ for some sufficiently large infinite cardinal number $\kappa$ depending on $K$. The fact that $\kappa$ is infinite is an essential part of the theory, being necessary for e.g.\ the existence of extreme points. In the separable setting, it typically suffices to take $\kappa = \aleph_0$. 

The conditions on $K$ are equivalent to requiring that $K$ is closed under nc convex combinations, meaning that $\sum \alpha_i^* x_i \alpha_i \in K_n$ for every bounded family of points $\{x_i \in K_{n_i}\}$ and every family of scalar matrices $\{\alpha_i \in \calM_{n_i,n}\}$.

The prototypical example of a compact nc convex set is the nc state space of a unital operator system $S$ defined by $K = \sqcup K_n$, where $K_n = \ucp(S,\calM_n)$ is the set of unital completely positive maps from $S$ into the space $\calM_n$ of $n \times n$ matrices. The dual equivalence in \cite{DK2019} implies that $S$ is isomorphic to the operator system $\rmA(K)$ of continuous affine nc functions on $K$, and that, on the other hand, if $K$ is a compact nc convex set, then $K$ is affinely homeomorphic to the nc state space of the operator system $\rmA(K)$. In particular, every compact nc convex set arises as the nc state space of an operator system.

For a (generalized) operator system $S$, it is necessary to instead consider the nc quasistate space of $S$. This is the pair $(K,z)$ consisting of the compact nc convex set $K = \sqcup K_n$, where $K_n = \ccp(S,\calM_n)$ is the set of completely contractive and completely positive maps from $S$ into $\calM_n$, and $z \in K_1$ is the zero map.

We are therefore led to consider pairs $(K,z)$ consisting of a compact nc convex set $K$ and a distinguished point $z \in K_1$. However, it turns out that not every pair $(K,z)$ arises as the nc quasistate space of an operator system. This is an important point that explains many of the difficulties that arise in the non-unital setting. In order to obtain the desired dual equivalence between operator systems and pointed compact nc convex sets, it is necessary to impose an additional constraint.

Specifically, we say that the pair $(K,z)$ is a pointed compact nc convex set if the operator system $\rmA(K,z) \subseteq \rmA(K)$ of continuous affine nc functions on $K$ that vanish at $z$ has nc quasistate space $(K,z)$. Our results will imply that this property is equivalent to $(K,z)$ arising as the state space of a compact nc convex set.

We consider pointed compact nc convex sets and functions on pointed compact nc convex sets in Section \ref{sec:pointed-nc-convex-sets} and Section \ref{sec:pointed-nc-functions} respectively. The following two results establishing the above-mentioned dual equivalence are the main results in Section \ref{sec:categorical-duality}.

\begin{introtheorem} \label{thm:intro-duality}
	An operator system $S$ with nc quasistate space $(K,z)$ is isomorphic to the operator system $\rmA(K,z) \subseteq \rmA(K)$ of continuous affine nc functions on $K$ that vanish at $z$. Hence $(K,z)$ is a pointed compact nc convex set if and only if it arises as the nc quasistate space of an operator system.
\end{introtheorem}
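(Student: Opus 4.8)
The plan is to reduce the statement to the unital duality of Davidson and the first author (\cite{DK2019}) by passing to the unitization of $S$. Write $S^\sharp$ for the (closed) unital operator system obtained by adjoining a unit to $S$, so that $S$ sits inside $S^\sharp$ as a codimension-one subspace with complement $\C\cdot 1$. The construction of $S^\sharp$ should be arranged precisely so that restriction and unital extension set up a bijection
\[
	\ucp(S^\sharp,\calM_n)\;\longleftrightarrow\;\ccp(S,\calM_n),\qquad \psi\mapsto\psi|_S,
\]
for every $n$: a unital completely positive map on $S^\sharp$ restricts to a completely contractive completely positive map on $S$ (it is automatically completely contractive since $\|\psi\|_{\cb}=\|\psi(1)\|=1$), and conversely every element of $\ccp(S,\calM_n)$ extends to a (necessarily unique) unital completely positive map on $S^\sharp$. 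First I would verify that this bijection is natural in $n$, continuous with continuous inverse, and compatible with direct sums and compressions, so that it is an affine nc homeomorphism of the nc state space $\tilde K$ of $S^\sharp$ onto the nc convex set $K$ underlying the nc quasistate space of $S$; in particular $K$ is genuinely a compact nc convex set, since $\tilde K$ is. Applying the correspondence to the zero quasistate $z=0\in\ccp(S,\C)$ shows that $z$ corresponds to the canonical character $\chi\in\tilde K_1$ determined by $\chi|_S=0$ and $\chi(1)=1$.

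Granting this, the remainder is essentially bookkeeping. By the unital duality the canonical evaluation map $\Phi\colon S^\sharp\to\rmA(\tilde K)=\rmA(K)$, $a\mapsto\hat a$ with $\hat a(\psi)=\psi(a)$, is an isomorphism of unital operator systems. Evaluating at the distinguished point gives $\ev_z\circ\Phi=\chi$, since $\hat a(z)=\chi(a)$; consequently $\Phi$ carries $\ker\chi=S$ isomorphically onto $\ker\ev_z=\rmA(K,z)$. Concretely, for $s\in S$ we have $\hat s(z)=z(s)=0$, so $\hat s\in\rmA(K,z)$, while for a general $a=s+\lambda 1\in S^\sharp$ we have $\hat a(z)=\lambda$, so $\hat a$ vanishes at $z$ exactly when $a\in S$. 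Restricting the isomorphism $\Phi$ to these complemented subspaces, each carrying the operator system structure inherited from the ambient unital systems, yields the desired operator system isomorphism $S\cong\rmA(K,z)$.

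The second assertion is then immediate. If $(K,z)$ arises as the nc quasistate space of some operator system $S$, then $S\cong\rmA(K,z)$ by the first part, so $\rmA(K,z)$ has nc quasistate space $(K,z)$; thus $(K,z)$ satisfies the definition of a pointed compact nc convex set. The converse is the definition itself: a pointed compact nc convex set $(K,z)$ is by definition one for which $\rmA(K,z)$ has nc quasistate space $(K,z)$, and this exhibits $(K,z)$ as the nc quasistate space of the operator system $\rmA(K,z)$.

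I expect the main obstacle to be the unitization step, namely establishing the correspondence $\ucp(S^\sharp,\calM_n)\leftrightarrow\ccp(S,\calM_n)$ and verifying it is an affine nc homeomorphism. This is exactly where the subtleties of the nonunital theory are concentrated: one must equip $S^\sharp$ with the correct matrix ordering so that the quasistates of $S$ are recovered precisely as the states of $S^\sharp$, with no spurious positivity introduced, and the surjectivity of the extension map (every ccp map does extend to a ucp map) is the delicate point. Once this is in place, the passage to continuous affine nc functions and the identification of $S$ with the functions vanishing at $z$ are formal consequences of the unital duality.
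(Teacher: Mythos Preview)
Your approach is essentially the paper's: pass to the unitization $S^\sharp$, identify its nc state space with $K$ via Werner's extension result (every ccp map on $S$ extends uniquely to a ucp map on $S^\sharp$), apply the unital duality to obtain an isomorphism $\Phi\colon S^\sharp\to\rmA(K)$, and then restrict to the kernels $S=\ker\chi$ and $\rmA(K,z)=\ker\ev_z$. The bijection you describe is Werner's Lemma~4.9, and the resulting identification of $K$ with the nc state space of $S^\sharp$ is exactly the paper's Proposition~\ref{prop:nc-state-space-unitization}.

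There is one step you undersell. You write that the restricted map is an isomorphism because each subspace carries ``the operator system structure inherited from the ambient unital systems''. For $\rmA(K,z)$ this is its definition, but $S$ comes equipped with its \emph{own} matrix cone $P$, and one must check that this agrees with the cone inherited from $S^\sharp$; equivalently, that the canonical inclusion $S\hookrightarrow S^\sharp$ is a complete order \emph{embedding}, not merely a complete isometry (Theorem~\ref{thm:partial-unitization} only records that it is completely positive). Concretely, the issue is: if $s\in\calM_n(S)$ satisfies $x(s)\geq 0$ for every nc quasistate $x$, must $s\in P_n$? The paper's proof of Theorem~\ref{thm:operator-system-isomorphism} settles this by an explicit separation argument: if $s\notin P_n$, one uses the nc separation theorem to produce a self-adjoint completely bounded map separating $s$ from the closed cone $P_n$, and after rescaling this yields an nc quasistate $x$ with $x(s)\not\geq 0$. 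Your phrase ``no spurious positivity introduced'' gestures at this, but it is a property to be \emph{proved} about the given matrix order on $S$, not a design constraint on $S^\sharp$; the surjectivity of the ccp-to-ucp extension map, which you flag as the delicate point, is already handled by Werner.
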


Theorem \ref{thm:intro-duality} is the key ingredient in the dual equivalence between the category of generalized operator systems and the category of pointed compact nc convex sets.

\begin{introtheorem}
The category $\OpSys$ of generalized operator systems is dually equivalent to the category $\PoNCConv$ of pointed compact nc convex sets. 
\end{introtheorem}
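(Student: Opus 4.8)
The plan is to exhibit an explicit pair of contravariant functors together with natural isomorphisms witnessing the duality, with Theorem~\ref{thm:intro-duality} supplying the object-level content. Define a functor $\calQ \colon \OpSys \to \PoNCConv$ sending an operator system $S$ to its nc quasistate space $(K_S, z_S)$, where $(K_S)_n = \ccp(S, \calM_n)$ and $z_S$ is the zero map; on a ccp morphism $\phi \colon S \to T$ let $\calQ(\phi)$ be precomposition $\psi \mapsto \psi \circ \phi$. Define a functor $\calA \colon \PoNCConv \to \OpSys$ sending $(K,z)$ to the operator system $\rmA(K,z)$ of continuous affine nc functions vanishing at $z$; on a base-point-preserving continuous affine nc morphism $\theta \colon (K,z) \to (L,w)$ let $\calA(\theta)$ be precomposition $f \mapsto f \circ \theta$.

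Next I would verify that these assignments are well defined and functorial. For $\calQ$, the map $\calQ(\phi)$ is clearly affine, nc, and continuous, and it preserves base points since $0 \circ \phi = 0$; functoriality and the fact that it lands among $\PoNCConv$-morphisms are routine. The content lies in $\calA$: I must check that $f \mapsto f \circ \theta$ is completely positive and completely contractive. Complete positivity is immediate, since $\theta$ maps $K$ into $L$ and positivity of affine nc functions is tested pointwise. Complete contractivity is the crux, and it is exactly where the base-point hypothesis is used: the matrix norm on $\rmA(K,z)$ is recovered from evaluations of the nc quasistate structure anchored at $z$, so the identity $\theta(z) = w$ together with nc affineness of $\theta$ forces the evaluations of $F \circ \theta$ over $K$ to be dominated by those of $F$ over $L$, yielding $\|F \circ \theta\| \le \|F\|$ at every matrix level. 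I expect this verification --- pinning down the precise norm formula on $\rmA(K,z)$ and confirming that the base-point condition makes precomposition contractive --- to be the main obstacle.

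With both functors in hand, the natural isomorphisms come from Theorem~\ref{thm:intro-duality}. For $S \in \OpSys$, the evaluation map $\eta_S \colon S \to \rmA(K_S, z_S)$, $\eta_S(s)(\psi) = \psi(s)$, is an isomorphism of operator systems by the first assertion of that theorem; for $(K,z) \in \PoNCConv$, the evaluation map $\epsilon_{(K,z)} \colon (K,z) \to \calQ(\calA(K,z))$, $x \mapsto \ev_x$, is an isomorphism of pointed nc convex sets precisely because $(K,z)$ being pointed means its nc quasistate space is $(K,z)$ --- this is the second assertion. Naturality of $\eta$ and $\epsilon$ is a direct diagram chase: both sides of each naturality square are computed by iterated evaluation and composition, and they agree by associativity of composition. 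Finally, since $\eta$ and $\epsilon$ are natural isomorphisms, fullness and faithfulness of $\calQ$ and $\calA$ follow formally (faithfulness also follows directly, since ccp maps into matrix algebras separate points), establishing that $\calQ$ and $\calA$ are mutually inverse up to natural isomorphism and hence that $\OpSys$ and $\PoNCConv$ are dually equivalent.
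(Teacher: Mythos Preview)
Your approach is the same as the paper's: both construct the contravariant functors by precomposition and appeal to Theorem~\ref{thm:intro-duality} for the natural isomorphisms at the object level, with naturality a routine diagram chase.

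One small correction worth flagging: you have misidentified where the base-point hypothesis enters in verifying that $\calA(\theta)$ is a morphism. Complete contractivity is not the crux and requires no special argument. The matrix norm on $\rmA(K,z)$ is simply the uniform norm inherited from $\rmA(K)$, namely $\|F\| = \sup_{x \in K}\|F(x)\|$, so $\theta(K)\subseteq L$ alone gives $\|F\circ\theta\|\leq\|F\|$ at every matrix level, independent of the base point. The condition $\theta(z)=w$ is needed instead for \emph{well-definedness}: it ensures $(f\circ\theta)(z)=f(w)=0$, so that $f\circ\theta$ lands in $\rmA(K,z)$ rather than merely in $\rmA(K)$. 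With that adjustment there is no obstacle, and the remainder of your argument goes through as written.
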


An important consequence of Theorem \ref{thm:intro-duality} is that essentially all of the results from \cite{DK2019} about unital operator systems apply to (generalized) operator systems. For example, in Section \ref{sec:min-max-c-star-covers}, we establish characterizations of the maximal and minimal C*-covers of an operator system in terms of the C*-algebra of continuous nc functions on its nc quasistate space. As a corollary, we recover results about the minimal C*-cover (i.e.\ the C*-envelope) recently obtained by Connes and van Suijlekom \cite{CS2020}. 

\begin{introtheorem}
Let $(K,z)$ be a pointed compact nc convex set.
\begin{enumerate}
\item The C*-algebra $\rmC(K,z)$ of pointed continuous nc functions on $(K,z)$ is the maximal C*-cover of $\rmA(K,z)$.
\item Let $I_{\sb}$ denote the boundary ideal in the C*-algebra $\rmC(K)$ of continuous nc functions on $K$ relative to the unital operator system $\rmA(K)$, so that the C*-algebra $\rmC(K)/I_{\sb} \cong \rmC(\sb)$ is the minimal unital C*-cover of $\rmA(K)$, and let $I_{(\sb,z)} = I_{\sb} \cap \rmC(K,z)$. Then the C*-algebra $\rmC(K,z)/I_{(\sb,z)}$ is the minimal C*-cover of $\rmA(K,z)$.
\end{enumerate}
\end{introtheorem}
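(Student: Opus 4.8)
The plan is to deduce both parts from the unital results of \cite{DK2019} via the operator system unitization $\rmA(K) = \rmA(K,z)^\dagger$ and the companion decomposition $\rmC(K) = \rmC(K,z) + \C 1$, where I read $\rmC(K,z)$ as the kernel of the evaluation character $\ev_z \colon \rmC(K) \to \C$, $f \mapsto f(z)$ (this is a $*$-homomorphism since point evaluation of continuous nc functions is). For (1), I would first record that $\rmA(K,z) \hookrightarrow \rmC(K,z)$ is a complete order embedding, being a restriction of $\rmA(K) \hookrightarrow \rmC(K)$, and that $\rmA(K,z)$ generates $\rmC(K,z)$: since $\rmA(K) = \rmA(K,z) + \C 1$ generates $\rmC(K)$, every $f \in \rmC(K,z)$ is a limit of polynomials $p_0 + c1$ with $p_0$ in the algebra generated by $\rmA(K,z)$, and evaluating at $z$ forces $c \to 0$.

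For the universal property I would start with a ccp map $\phi \colon \rmA(K,z) \to \scrB \subseteq \calB(\calH)$, unitize it to a ucp map $\tilde\phi \colon \rmA(K) \to \scrB^\dagger$ with $\tilde\phi(1)=1$, and use $\rmC(K) = \camax(\rmA(K))$ to extend $\tilde\phi$ to a unital $*$-homomorphism $\tilde\pi \colon \rmC(K) \to \scrB^\dagger$. The crucial point is that the composite of $\tilde\pi$ with the quotient $\scrB^\dagger \to \C$ is a character on $\rmC(K)$ agreeing with $\ev_z$ on $\rmA(K)$, hence equal to $\ev_z$; consequently $\tilde\pi$ maps $\rmC(K,z) = \ker\ev_z$ into $\scrB$, and $\tilde\pi|_{\rmC(K,z)}$ is a $*$-homomorphism extending $\phi$. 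This establishes $\rmC(K,z) = \camax(\rmA(K,z))$.

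For (2), writing $J$ for the boundary ideal of $\rmA(K,z)$ in $\camax(\rmA(K,z)) = \rmC(K,z)$ — the largest ideal for which the quotient is completely isometric on $\rmA(K,z)$, so that $\rmC(K,z)/J = \camin(\rmA(K,z))$ — I would prove $J = I_{(\sb,z)}$ by two inclusions. That $I_{(\sb,z)} \subseteq J$ is immediate: $\rmC(K,z)/I_{(\sb,z)}$ embeds into $\rmC(K)/I_{\sb} \cong \rmC(\sb)$, on which $\rmA(K)$ and hence $\rmA(K,z)$ sits completely isometrically, so $I_{(\sb,z)}$ is a boundary ideal. For the reverse inclusion I would first observe that $J$, though a priori only an ideal of $\rmC(K,z)$, is automatically an ideal of $\rmC(K)$, because $\rmC(K) = \rmC(K,z) + \C 1$ and the scalars absorb into $J$.

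The hard part will be the last step: promoting ``$\rmC(K) \to \rmC(K)/J$ is completely isometric on $\rmA(K,z)$'' to the same statement on all of $\rmA(K)$. I expect to obtain this from the functoriality of the operator system unitization established earlier in the paper, namely that the matrix order and norm on a unitization are determined by the underlying system, so that a unital map is completely isometric on $\rmA(K) = \rmA(K,z)^\dagger$ precisely when it is completely isometric on $\rmA(K,z)$. Granting this, $J$ is a boundary ideal for $\rmA(K)$ in $\rmC(K)$, whence $J \subseteq I_{\sb}$; together with $J \subseteq \rmC(K,z)$ this gives $J \subseteq I_{(\sb,z)}$, and combining the two inclusions yields $\rmC(K,z)/I_{(\sb,z)} = \camin(\rmA(K,z))$ as claimed.
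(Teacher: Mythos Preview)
Your argument for part (1) is correct and close in spirit to the paper's, though the paper takes a shorter route: rather than unitizing and invoking the unital universal property of $\rmC(K)$, it observes that an embedding $\phi:\rmA(K,z)\to B\subseteq\calM_n$ is (by pointedness) evaluation at some $x\in K_n$, so the evaluation $*$-homomorphism $\delta_x$ on $\rmC(K,z)$ already does the job. One small imprecision: you begin with a general ccp map $\phi$, but the universal property of a maximal C*-cover is only tested against C*-covers, i.e.\ against \emph{embeddings} in the paper's sense; once $\phi$ is an embedding, your $\tilde\phi=\phi^\sharp$ is a unital complete order embedding by definition and your unitization argument goes through.

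Part (2), however, has a genuine gap at precisely the step you flag as the ``hard part.'' You define $J$ as the largest ideal for which the quotient is \emph{completely isometric} on $\rmA(K,z)$, and then hope to promote this to complete isometry on $\rmA(K)$ via ``functoriality of unitization.'' But the paper establishes that functoriality only for completely isometric complete order \emph{isomorphisms} (Theorem~\ref{thm:partial-unitization}(3)), and Example~\ref{ex:does-not-extend-to-complete-order-embedding} is constructed exactly to show that a completely isometric complete order embedding $\phi:S\to B$ into a C*-algebra with $B=\ca(\phi(S))$ can have $\phi^\sharp$ fail to be a complete order embedding. So the implication ``completely isometric on $\rmA(K,z)$ $\Rightarrow$ completely isometric on $\rmA(K)$'' is not available, and with your definition of $J$ there is no reason that $J\subseteq I_{\sb}$. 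This is why the paper builds the unitization condition into the very notion of embedding and C*-cover (see Remark~\ref{rem:difficulty-complete-order-embeddings}): the boundary ideal $I_{(\sb,z)}$ is the largest ideal whose quotient is an \emph{embedding} on $\rmA(K,z)$, not merely completely isometric (Remark~\ref{rem:generalized-boundary-ideal}).

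The paper's proof of (2) sidesteps this entirely. Rather than comparing boundary ideals, it checks the universal property directly: given any C*-cover $(B,\phi)$, part (1) produces a surjection $\sigma:\rmC(K,z)\to B$; its unitization $\sigma^\sharp:\rmC(K)\to B^\sharp$ restricts to $\phi^\sharp$ on $\rmA(K)$, which is completely isometric \emph{because $\phi$ is by hypothesis an embedding}, whence $\ker\sigma^\sharp\subseteq I_{\sb}$ and $\ker\sigma\subseteq I_{(\sb,z)}$. If you want to salvage your boundary-ideal approach, redefine $J$ as the largest ideal for which the quotient is an embedding on $\rmA(K,z)$; then your ``hard part'' becomes the definition, and the rest of your argument carries over.
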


In Section \ref{sec:quotients-operator-systems}, as another application of the dual equivalence between operator systems and pointed compact nc convex sets, we develop a theory of quotients of generalized operator systems that extends the theory of quotients of unital operator systems developed by Kavruk, Paulsen, Todorov and Tomforde \cite{KPTT2013}.

\begin{introtheorem} \label{thm:functorial-characterization-quotients}
Let $S$ be an operator system and let $J \subseteq S$ be the kernel of a completely contractive and completely positive map on $S$. There is a unique pair $(S/J,\phi)$ consisting of an operator system $S/J$ and a morphism $\phi : S \to S/J$ with the property that whenever $T$ is an operator system and $\psi : S \to T$ is a completely contractive and completely positive map with $J \subseteq \ker \psi$, then $\psi$ factors through $\phi$. In other words, there is a completely contractive and completely positive map $\omega : S/J \to T$ such that $\psi = \omega \circ \phi$.
\end{introtheorem}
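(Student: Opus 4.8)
The plan is to prove the theorem by transporting the entire construction to the dual side and working with nc quasistate spaces, exploiting the dual equivalence between $\OpSys$ and $\PoNCConv$. Let $(K,z)$ denote the nc quasistate space of $S$, so that $S \cong \rmA(K,z)$ by Theorem~\ref{thm:intro-duality}. For a point $\theta \in K_n = \ccp(S,\calM_n)$, the condition $J \subseteq \ker\theta$ is preserved under direct sums, compressions, and limits, so the graded set
\[
K^J \;=\; \bigsqcup_n \{\theta \in K_n : J \subseteq \ker\theta\}
\]
is a closed nc convex subset of $K$, and it contains $z$ since the zero map annihilates $J$. I would then define $S/J := \rmA(K^J,z)$ and take $\phi : S \to S/J$ to be the map dual to the inclusion $\iota : (K^J,z) \hookrightarrow (K,z)$, that is, the restriction map $f \mapsto f|_{K^J}$ under the identification $S \cong \rmA(K,z)$. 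Because every $\theta \in K^J$ annihilates $J$, the affine nc function corresponding to $j \in J$ restricts to zero on $K^J$, so $J \subseteq \ker\phi$ automatically.

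The crux of the argument, and the step I expect to be the main obstacle, is to verify that $(K^J,z)$ is genuinely an object of $\PoNCConv$, i.e.\ that it is a pointed compact nc convex set; only then may $\rmA(K^J,z)$ be treated as the operator system dual to $(K^J,z)$ and the equivalence applied to $\iota$. Concretely, this amounts to showing that the evaluation map $K^J \to \ccp(\rmA(K^J,z),\calM_n)$ is an affine nc homeomorphism. Injectivity is routine: for distinct $\theta_1,\theta_2 \in K^J$ there is $f \in \rmA(K)$ separating them, and then $f - f(z)\mathbf{1}$ lies in $\rmA(K,z)$, still separates $\theta_1$ and $\theta_2$, and restricts to an element of $\rmA(K^J,z)$. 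The real content is surjectivity, which is exactly the pointedness condition. Here I would pass through the unital duality of \cite{DK2019}: regarded as an abstract compact nc convex set, $K^J$ is the nc state space of the unital operator system $\rmA(K^J)$, and the decomposition $\rmA(K^J) = \C\mathbf{1} \oplus \rmA(K^J,z)$ exhibits $\rmA(K^J)$ as a unitization of $\rmA(K^J,z)$ with counit evaluation at $z$. Every quasistate of $\rmA(K^J,z)$ should then extend to a state of $\rmA(K^J)$, which unital duality identifies with a point of $K^J$, and tracking the behaviour at $z$ shows the extension is compatible with the distinguished point. This is precisely the verification carried out for pointed nc convex sets in Section~\ref{sec:pointed-nc-convex-sets}, whose criterion I would invoke.

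With $(K^J,z) \in \PoNCConv$ established, the universal property follows formally. Given an operator system $T$ with nc quasistate space $(L,w)$ and a ccp map $\psi : S \to T$ with $J \subseteq \ker\psi$, dualizing $\psi$ yields a pointed continuous affine nc map $\psi_* : (L,w) \to (K,z)$, $\lambda \mapsto \lambda \circ \psi$. For $\lambda \in L_n$ and $j \in J$ one has $(\psi_*\lambda)(j) = \lambda(\psi(j)) = 0$, so the image of $\psi_*$ lies in $K^J$; hence $\psi_*$ corestricts to a pointed affine nc map $\widetilde\psi : (L,w) \to (K^J,z)$ with $\iota \circ \widetilde\psi = \psi_*$. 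Setting $\omega := \widetilde\psi^{\,*} : S/J \to T$ and using functoriality of the duality gives $\omega \circ \phi = (\iota \circ \widetilde\psi)^{*} = \psi$, and $\omega$ is ccp because it is the dual of a morphism in $\PoNCConv$.

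Finally, uniqueness of $\omega$ (and hence of the pair $(S/J,\phi)$ up to isomorphism) follows because $\iota$ is a monomorphism, being an inclusion of graded sets, so its dual $\phi$ is an epimorphism. Indeed, if $\omega_1 \circ \phi = \omega_2 \circ \phi$, then dualizing yields $\iota \circ \widetilde{\omega_1} = \iota \circ \widetilde{\omega_2}$, and cancelling the monic $\iota$ forces $\widetilde{\omega_1} = \widetilde{\omega_2}$, whence $\omega_1 = \omega_2$ since the duality is faithful. In summary, the only genuinely nontrivial input is the pointedness of $(K^J,z)$ treated in the second paragraph; everything else is a transcription through the equivalence of categories.
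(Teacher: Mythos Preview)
Your overall architecture matches the paper's exactly: dualize to the nc quasistate space $(K,z)$, form the annihilator $K^J = J^\perp$, set $S/J = \rmA(K^J,z)$ with $\phi$ the restriction map, and read off the universal property and uniqueness from the categorical equivalence. The verification of the universal property and the uniqueness argument are correct and essentially identical to the paper's Theorem~\ref{thm:universal-characterization-quotients}.

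The gap is precisely where you flagged it: the pointedness of $(K^J,z)$. Your proposed argument is circular. The decomposition $\rmA(K^J) = \C\mathbf{1} \oplus \rmA(K^J,z)$ is only a vector-space splitting; it does \emph{not} by itself exhibit $\rmA(K^J)$ as the partial unitization $\rmA(K^J,z)^\sharp$ in the sense of Definition~\ref{defn:partial-unitization}, because the partial unitization carries a specific matrix cone. The assertion ``every quasistate of $\rmA(K^J,z)$ should then extend to a state of $\rmA(K^J)$'' is exactly equivalent to pointedness (see Corollary~\ref{cor:pointed-conditions}), so invoking it assumes what you are trying to prove. And Section~\ref{sec:pointed-nc-convex-sets} contains no general criterion that applies here; Example~\ref{ex:first-example-pointed-2} shows that a pair $(M,z)$ with $M$ a compact nc convex set and $z \in M_1$ need not be pointed, so some genuine input is required.

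The paper's device (Proposition~\ref{prop:kernel-restriction-map}) is to exploit the pointedness of the \emph{ambient} pair $(K,z)$: given a quasistate $\theta$ on $\rmA(K^J,z)$, pre-compose with the restriction map $r : \rmA(K,z) \to \rmA(K^J,z)$ to obtain a quasistate $\theta \circ r$ on $\rmA(K,z)$. Since $(K,z)$ is pointed, $\theta \circ r$ is evaluation at some $x \in K$, and since $x$ annihilates $J$ one gets $x \in K^J$. This is the missing step in your outline; once you insert it, your proof and the paper's coincide.
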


We also obtain some results that are new even for unital operator systems. In Section \ref{sec:c-star-simplicity}, we establish a characterization of operator systems that are C*-simple, meaning that their minimal C*-cover is simple. We refer to Section \ref{sec:min-max-c-star-covers} for the definition of the spectral topology.

\begin{introtheorem}
An operator system $S$ with nc quasistate space $(K,z)$ is C*-simple if and only if the closed nc convex hull of any nonzero point in the spectral closure of $\partial K$ contains $\partial K \setminus \{z\}$.
\end{introtheorem}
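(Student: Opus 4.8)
The plan is to transport the problem entirely to the C*-algebra side. Write $A := \rmC(K,z)/I_{(\sb,z)}$ for the minimal C*-cover of $S$ furnished by Section~\ref{sec:min-max-c-star-covers}, so that $S$ is C*-simple precisely when $A$ is simple, where $\sb$ denotes the spectral closure of $\partial K$. Since the Jacobson topology on $\Prim(A)$ is $T_0$, simplicity is equivalent to the statement that the only primitive ideal of $A$ is $\{0\}$, i.e.\ that every nonzero representation of $A$ is faithful. I would first recall the representation-theoretic dictionary from Section~\ref{sec:min-max-c-star-covers}: the irreducible representations of $A$ are exactly the boundary representations $\pi_x$ attached to nonzero points $x \in \partial K$ (with $z$ giving the zero representation), that more generally each $x \in \sb$ yields a representation $\pi_x$ with $\pi_x \neq 0$ iff $x \neq z$, and that the direct sum of the boundary representations is jointly faithful on $A$. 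The organizing principle is the order-reversing correspondence between closed two-sided ideals of $A$ and spectrally-closed nc-convex subsets of $\sb$ containing $z$, sending $I$ to its hull $\{x \in \sb : \pi_x(I)=0\}$.

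The technical heart is a bridge lemma relating the point-weak-$*$ closed nc convex hull to weak containment of the associated representations: for $x \in \sb$ and a boundary point $y$,
\[
y \in \cl{\ncconv}(x) \quad\Longleftrightarrow\quad \ker \pi_x \subseteq \ker \pi_y .
\]
The forward implication is the softer half: compressions, ampliations, direct sums, and point-weak-$*$ limits of $x$ all produce representations weakly contained in $\pi_x$, hence with kernels containing $\ker\pi_x$, and the passage through limits is exactly what the spectral topology is designed to control. The reverse implication is where boundary-ness of $y$ is used: because $\pi_y$ is a boundary representation it has the unique extension property, so the containment $\ker\pi_x \subseteq \ker\pi_y$ forces $y$ into the spectral closure of the nc convex combinations of $x$, and one then descends from the spectral closure back to the ordinary closed nc convex hull.

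Granting the bridge lemma, I would finish both directions quickly. If $A$ is simple and $x \in \sb$ with $x \neq z$, then $\pi_x \neq 0$ is faithful, so $\ker\pi_x = \{0\} \subseteq \ker\pi_y$ for every boundary $y$; the lemma then places every boundary point in $\cl{\ncconv}(x)$, giving $\cl{\ncconv}(x) \supseteq \partial K \setminus \{z\}$. Conversely, assume the hull condition and let $x \in \partial K$ be any nonzero boundary point, with $P := \ker\pi_x$ primitive. For each boundary point $y \neq z$ we have $y \in \partial K \setminus \{z\} \subseteq \cl{\ncconv}(x)$, so the lemma yields $P \subseteq \ker\pi_y$; intersecting over all boundary $y$ and invoking joint faithfulness of the boundary representations gives $P \subseteq \bigcap_y \ker\pi_y = \{0\}$. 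Thus every primitive ideal of $A$ is $\{0\}$, so $A$ is simple.

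The main obstacle I anticipate is the bridge lemma, and in particular its reverse implication together with the interplay of the two topologies: the quantifier in the statement ranges over the spectral closure $\sb$ while the conclusion concerns the ordinary closed nc convex hull, and reconciling these requires the precise comparison between spectral convergence and point-weak-$*$ convergence of the representations $\pi_x$. A secondary point demanding care, specific to the nonunital setting, is the bookkeeping around $z$: it lies in every hull and gives the zero representation, which is exactly why the generating condition must be phrased with $\partial K \setminus \{z\}$ rather than all of $\partial K$, and one must verify that removing $z$ is harmless for the ideal correspondence.
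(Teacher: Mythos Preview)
Your overall strategy---reducing to kernel containment on the minimal C*-cover $A = \rmC(\sb,z)$ via a ``bridge lemma''---is the same mechanism the paper uses, but there are two genuine gaps. First, in your converse direction you assert that the irreducible representations of $A$ are exactly the $\pi_x$ for nonzero $x \in \partial K$. This is false: by Proposition~\ref{prop:spectral-closure-factors-through} they correspond to nonzero points in the \emph{spectral closure} $\sb$ of $\partial K$, which is precisely why the spectral closure appears in the statement of the theorem. Consequently your argument only shows that primitive ideals arising from boundary points are zero, leaving those from $\sb \setminus \partial K$ untouched. The fix is immediate---start with arbitrary nonzero $x \in \sb$, since the hypothesis is quantified over exactly these---but the error reflects a misunderstanding of the role of the spectral closure.

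Second, you have the difficulty of the two halves of the bridge lemma reversed. The implication $y \in \cl{\ncconv}(x) \Rightarrow \ker\pi_x \subseteq \ker\pi_y$ is \emph{not} soft: if $y$ is a limit of compressions $\alpha_i^* x^{(\infty)} \alpha_i$ in $K$, the compressed maps $\alpha_i^* \delta_{x^{(\infty)}}(\cdot)\alpha_i$ on $\rmC(K)$ cluster at some ucp map $\mu$ with barycenter $y$, and one must invoke the unique extension property of the extreme point $y$ to conclude $\mu = \delta_y$ and hence $\ker\delta_x \subseteq \ker\delta_y$. This is exactly the device in the paper's proof of the converse. Conversely, your sketch of the reverse bridge implication (via ``descending from the spectral closure back to the ordinary closed nc convex hull'') is vague and states more than is needed. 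The paper only handles the special case $\ker\pi_x = 0$ arising from simplicity, and does so concretely: faithfulness of $\pi_x$ makes $x$ a complete order embedding of $\rmA(K)$, and then the nc separation theorem gives a direct contradiction if some boundary $y \notin \cl{\ncconv}(x)$, with no reference to the unique extension property at all.
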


In Section \ref{sec:characterization-c-star-algebras}, we establish a characterization of operator systems that are isomorphic to C*-algebras in terms of their nc quasistate spaces, extending a result for unital operator systems from \cite{KS2019}.

\begin{introtheorem}
Let $S$ be an operator system with nc quasistate space $(K,z)$. Then $S$ is a C*-algebra if and only if $K$ is an nc Bauer simplex and $z$ is an extreme point. The result also holds for unital operator systems with nc quasistate spaces replaced by nc state spaces.	
\end{introtheorem}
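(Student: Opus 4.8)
The plan is to deduce both implications from the unital characterization of C*-algebras in \cite{KS2019}, after reducing to the unitization. Throughout I write $B = \rmA(K)$ for the unital operator system dual to the underlying compact nc convex set $K$; by the unital duality of \cite{DK2019} the set $K$ is the nc state space of $B$, and by Theorem \ref{thm:intro-duality} I may identify $S$ with $\rmA(K,z) = \{f \in B : f(z) = 0\}$, the affine nc functions vanishing at $z$. Thus $B$ plays the role of a unitization of $S$, and $S$ sits inside $B$ as the kernel of evaluation at $z$. The external input is the result of \cite{KS2019}: a unital operator system is completely order isomorphic to a unital C*-algebra if and only if its nc state space is an nc Bauer simplex. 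This already gives the final unital assertion, since there $S = \rmA(K)$ with $K$ its nc state space.

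For the forward implication of the quasistate version, suppose $S$ is a C*-algebra $A$. I would first argue that the unitization $B$ is again a unital C*-algebra. When $A$ is nonunital, the canonical bijection sending a completely contractive completely positive map $A \to \calM_n$ to its unique unital completely positive extension carries the nc quasistate space $K$ onto the nc state space of the C*-algebraic unitization $A^\sim$, so that $B \cong A^\sim$; when $A$ is already unital one instead obtains $B \cong A \oplus \C$. In either case $B$ is a unital C*-algebra, whence $K$ is an nc Bauer simplex by \cite{KS2019}. Moreover the distinguished point $z$ is exactly the character of $B$ that annihilates $A$ (respectively the coordinate character onto the adjoined copy of $\C$); being a one-dimensional, hence irreducible, representation of the C*-algebra $B$, it is a boundary representation and therefore an nc extreme point of $K$ by \cite{DK2019}.

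For the reverse implication, suppose $K$ is an nc Bauer simplex and $z$ is an nc extreme point. By \cite{KS2019} the unitization $B = \rmA(K)$ is a unital C*-algebra. Since $z \in K_1$ is an nc extreme point, the corresponding state $B \to \calM_1 = \C$ is a one-dimensional boundary representation of $B$ by \cite{DK2019}; as $B$ is a C*-algebra this is an irreducible representation on a one-dimensional space, that is, a character $\chi \colon B \to \C$. Consequently $\chi$ is multiplicative and $S \cong \rmA(K,z) = \{b \in B : \chi(b) = 0\} = \ker \chi$ is a closed two-sided ideal of the C*-algebra $B$, hence itself a C*-algebra, as required.

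The main obstacle I anticipate is the careful identification of the unitization $B = \rmA(K)$ with an honest C*-algebra in the forward direction, namely verifying that the operator-system unitization of a C*-algebra coincides with its C*-algebraic unitization (and handling the exceptional case in which $A$ is already unital), together with invoking the precise correspondence from \cite{DK2019} between level-one nc extreme points of $K$ and characters of $B$. Once these identifications are in place, the equivalence follows formally from the unital theorem and the observation that $S$ occurs inside its unitization as the kernel of the evaluation at $z$.
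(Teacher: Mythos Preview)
Your proposal is correct and follows essentially the same route as the paper: both arguments reduce to the unitization $S^\sharp \cong \rmA(K)$, invoke \cite{KS2019}*{Theorem 10.5} to identify $K$ being an nc Bauer simplex with $S^\sharp$ being a C*-algebra, and then use the correspondence between extreme points at level one and irreducible (hence character) representations to show that $z \in \partial K$ is precisely the condition making $S = \ker z$ an ideal. The paper packages the second step as a separate lemma (Lemma~\ref{lem:c-star-algebra-iff-unitization-is}) and treats the unital and nonunital cases uniformly via the remark that the partial unitization of a C*-algebra always coincides with its C*-algebraic unitization, but the substance is the same as what you outline.
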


In Section \ref{sec:stable-equivalence}, we make another connection to the recent work of Connes and van Suijlekom \cite{CS2020}. They consider operator systems $S$ and $T$ that are stably equivalent in the sense that the minimal tensor product $S \mintens \calK$ is isomorphic to the minimal tensor product $T \mintens \calK$, where $\calK$ denotes the C*-algebra of compact operators. The next result is a characterization of stable equivalence of operator systems in terms of their nc quasistate spaces.

\begin{introtheorem}
	Let $S$ and $T$ be operator systems with nc quasistate spaces $(K,z)$ and $(L,w)$ respectively. Let $0_\calK$ and $\id_{\calK}$ denote the zero map and the identity representation respectively of $\calK$. Then $S$ and $T$ are stably isomorphic if the closed nc convex hulls of $\partial K \otimes \{0_{\calK}, \id_{\calK}\}$ and $\partial L \otimes \{0_{\calK}, \id_{\calK}\}$ are pointedly affinely homeomorphic with respect to the points $z \otimes 0_\calK$ and $w \otimes 0_\calK$ (see Section \ref{sec:stable-equivalence}). 
\end{introtheorem}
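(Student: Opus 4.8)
The plan is to deduce the statement from the dual equivalence between $\OpSys$ and $\PoNCConv$ by identifying the nc quasistate space of the stabilization $S \mintens \calK$ with the closed nc convex hull appearing in the hypothesis. Thus the entire theorem reduces to the following computation, which I would isolate as the main lemma: the nc quasistate space of $S \mintens \calK$ is pointedly affinely homeomorphic to the closed nc convex hull of $\partial K \otimes \{0_\calK, \id_\calK\}$, pointed at $z \otimes 0_\calK$, and similarly for $T$. Granting this, the argument is immediate. Since $S \mintens \calK$ is an operator system, Theorem~\ref{thm:intro-duality} guarantees that its nc quasistate space is a pointed compact nc convex set, and the lemma realizes it as the stated hull; the hypothesis then asserts precisely that the nc quasistate spaces of $S \mintens \calK$ and $T \mintens \calK$ are pointedly affinely homeomorphic, so applying the dual equivalence produces an operator system isomorphism $S \mintens \calK \cong T \mintens \calK$, which is exactly stable isomorphism of $S$ and $T$.

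To prove the lemma I would first record that $\calK$ is nuclear, so that $\mintens = \maxtens$ and there is no ambiguity in the tensor norm, and then identify the nc extreme boundary of the nc quasistate space of $\calK$ itself. Because $\calK$ is a simple C*-algebra possessing a unique (up to unitary equivalence) irreducible representation $\id_\calK$, and because the nonzero nc extreme points of the quasistate space of a C*-algebra are precisely its irreducible representations, the nc extreme boundary of the quasistate space of $\calK$ is $\{0_\calK, \id_\calK\}$; here the base point $0_\calK$ is itself an extreme point by the characterization of C*-algebras among operator systems (an nc Bauer simplex with extreme zero) recalled in the excerpt.

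The heart of the matter is to show that every nc extreme point of the quasistate space of $S \mintens \calK$ lies in the closed nc convex hull of $\partial K \otimes \{0_\calK, \id_\calK\}$; equivalently, that every boundary representation of $S \mintens \calK$ factors as a tensor product $x \otimes \id_\calK$ of a boundary representation $x \in \partial K$ of $S$ with the identity representation of $\calK$, the zero case contributing only the base point. This is a factorization result for extreme points of a minimal tensor product, in the spirit of the classical fact that a pure state on a minimal tensor product is a product of pure states, now upgraded to the matrix-valued, completely contractive nc setting. I would establish it by dilating such an extreme quasistate to an irreducible representation of the relevant C*-cover of $S \mintens \calK$, restricting along the embedded copy of $\calK$ (using an approximate unit of $\calK$ to compensate for the possible non-unitality of $S$), and using that every representation of $\calK$ is a multiple of $\id_\calK$ to split off the $\calK$-factor; the complementary $S$-factor is then forced to be a boundary representation by the unique extension property. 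Combining this factorization with the nc Krein--Milman theorem of \cite{DK2019}, that every compact nc convex set is the closed nc convex hull of its nc extreme points, gives that the quasistate space of $S \mintens \calK$ equals the closed nc convex hull of $\partial K \otimes \{0_\calK, \id_\calK\}$, with distinguished point the zero map $z \otimes 0_\calK$, as required.

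I expect the factorization of extreme points to be the main obstacle. One must control not merely pure states but arbitrary matrix-valued, completely contractive nc extreme maps, and rule out any ``entangled'' boundary representations on the minimal tensor product. The nuclearity and simplicity of $\calK$, together with the rigidity of its representation theory (a single irreducible representation), are precisely what make this feasible, while the approximate-unit argument is the delicate technical point where the non-unitality of $S$ must be handled with care.
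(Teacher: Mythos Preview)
Your proposal is correct and follows essentially the same route as the paper: identify the nc quasistate space of $S \mintens \calK$ with the closed nc convex hull of $\partial K \otimes \{0_\calK,\id_\calK\}$ via a factorization of its extreme points through irreducible representations of the minimal C*-cover, then invoke the nc Krein--Milman theorem and the dual equivalence (Corollary~\ref{cor:isomorphic-iff-affinely-homeomorphic}). The only point the paper handles differently is that, rather than an approximate-unit argument, it cites the Connes--van~Suijlekom result \cite{CS2020}*{Proposition~2.37} that $\camin(\rmA(K,z)\mintens\calK)=\rmC(\sb,z)\otimes\calK$, which makes the passage to irreducible representations of the tensor product immediate.
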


Finally, in Section \ref{sec:dynamics} we establish the following characterization of second countable locally compact groups with property (T), extending a result from \cite{KS2019} for discrete groups acting on unital C*-algebras, as well as a result of Glasner and Weiss from \cite{GW1997} for second countable locally compact groups acting on unital commutative C*-algebras.

\begin{introtheorem}
	A second countable locally compact group $G$ has Kazhdan's property (T) if and only if for every action of the group on a C*-algebra, the set of invariant quasistates is the quasistate space of a C*-algebra. The result also holds for unital C*-algebras with quasistate spaces replaced by state spaces. 
\end{introtheorem}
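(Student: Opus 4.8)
The plan is to translate the statement, through the characterization of C*-algebras proved above, into an assertion about nc Bauer simplices, and then to establish each implication using the dynamical content of property (T). Write $K^G = \sqcup_n \ccp(A,\calM_n)^G$ (respectively $\ucp(A,\calM_n)^G$ in the unital case) for the invariant nc quasistate space (respectively nc state space), and $z$ for the zero map. I would first record that $(K^G,z)$ is genuinely a pointed compact nc convex set, so that the characterization theorem applies: this can be obtained either by realizing the invariant quasistates as the quasistates of the quotient of $A$ by the closed span of the coboundaries $\{g\cdot a - a\}$, using the quotient theory developed above, or by exhibiting $K^G$ as the fixed-point set of the $G$-action by pointed affine nc homeomorphisms on the genuine pointed set $(Q(A),z)$. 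By the characterization of C*-algebras, the invariant quasistates then form the quasistate space of a C*-algebra exactly when $(K^G,z)$ is an nc Bauer simplex with $z$ extreme; since the zero map is always an extreme point, the whole content is that $K^G$ is an nc Bauer simplex. I would reduce the non-unital statement to the unital one by passing to the unitization: the action extends to $\tilde A$ fixing the unit, the correspondence $\phi \mapsto \tilde\phi$ identifies $(Q(A)^G,z)$ with $S(\tilde A)^G$, and this identification respects the nc Bauer simplex property.

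Next I would isolate the two ingredients of the Bauer condition. First, for an arbitrary action the invariant nc state space $K^G$ is an nc simplex: each invariant state admits a unique representing nc measure carried by the ergodic (nc extreme) invariant states. This follows from the nc ergodic decomposition in the manner of \cite{KS2019}, does not use property (T), and is where second countability of $G$ enters, guaranteeing that the disintegration is well behaved. Consequently $K^G$ is a Bauer simplex if and only if its nc extreme boundary $\partial K^G$, the set of ergodic invariant states, is closed, and this is the only point at which property (T) is needed.

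For the forward implication, fix a Kazhdan pair $(Q,\kappa)$. An invariant state is ergodic precisely when the canonical unitary representation of $G$ on the orthogonal complement of the GNS vector has no nonzero invariant vector; by property (T) this representation then has spectral gap at least $\kappa$, uniformly over all ergodic states and, crucially, over all matrix levels, since a Kazhdan pair controls every unitary representation of $G$ at once. Given a net of ergodic invariant states converging to $\phi$, I would show $\phi$ is again ergodic: if it were not, a nonzero invariant vector orthogonal to its GNS vector could be realized approximately by an element $a \in A$, and the quantities $\phi\bigl((g\cdot a - a)^*(g\cdot a - a)\bigr)$ being small and weak*-continuous in the state would transport this to $(Q,\kappa/2)$-almost invariant unit vectors in the complements attached to the nearby ergodic states, contradicting the uniform spectral gap. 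Hence $\partial K^G$ is closed, $K^G$ is an nc Bauer simplex, and after unitization the non-unital form follows.

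For the converse I would argue contrapositively. If $G$ lacks property (T), the construction of Glasner and Weiss \cite{GW1997} yields an action on a compact metric space $X$ for which the invariant measures $\mathrm{Prob}(X)^G$ do not form a Bauer simplex, i.e.\ the ergodic measures fail to be closed. Taking $A = C(X)$, were $K^G$ the nc state space of a C*-algebra it would be an nc Bauer simplex, so its level-one part $K^G_1 = \mathrm{Prob}(X)^G$ would be the classical state space of a C*-algebra $B$; but $\mathrm{Prob}(X)^G$ is a classical Choquet simplex, and a C*-algebra whose state space is a simplex is commutative, whence that state space would be a Bauer simplex, contradicting the choice of $X$. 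The non-unital counterexample comes from the same action. I expect the main obstacle to be the forward implication: making the spectral-gap transfer argument rigorous at all matrix levels and for genuinely noncommutative ergodic invariant states, rather than only for ergodic measures as in the classical setting, while ensuring that the continuity and disintegration arguments remain valid for second countable locally compact, as opposed to discrete, groups.
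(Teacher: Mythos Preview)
Your overall architecture matches the paper's: translate via the characterization of C*-algebras to the assertion that $K^G$ is an nc Bauer simplex with $z$ extreme, handle the forward implication through property (T), and handle the converse through the Glasner--Weiss construction on $\mathrm{C}(X)$. The converse argument you give is essentially identical to the paper's.

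The difference is in the forward implication. The paper does not carry out the spectral-gap argument you sketch; it invokes \cite{KS2019}*{Theorem 14.2} directly, observing that the proof there goes through verbatim for second countable locally compact $G$ once one checks that the unitary representation produced by the invariant dilation lemma \cite{KS2019}*{Lemma 12.6} is continuous. Your sketch is morally that argument, but two points need care if you want to execute it independently. First, the nc Bauer condition is closedness of $\partial K^G$ in the \emph{spectral} topology on $\Irr(K^G)$, not in the point-weak* topology, so a limit-of-ergodic-states argument at the scalar level does not by itself settle it; one has to work with the representation-theoretic description of the boundary. Second, your criterion ``ergodic iff no invariant vector in the orthocomplement of the GNS vector'' is the scalar characterization; for extreme points of $K^G$ at matrix levels $n>1$ the correct statement is formulated in \cite{KS2019} in terms of the dilation picture, and the uniform Kazhdan gap has to be fed through that formulation. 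You flag this as the expected obstacle, correctly, but the sketch as written does not yet meet it.

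One smaller gap: your proposed route to pointedness of $(K^G,z)$ via the quotient by the closed span $J$ of coboundaries requires $J$ to be a kernel, i.e.\ $J = J^{\perp\perp}$; the inclusion $J \subseteq (K^G)^\perp$ is clear but the reverse is not automatic (a functional annihilating coboundaries is $G$-invariant but need not be a quasistate). The paper avoids this: once $K^G$ is shown to be an nc Bauer simplex, $\rmA(K^G)$ is a unital C*-algebra and $z \in (\partial K^G)_1$ is a character, so $\rmA(K^G,z) = \ker z$ is a C*-ideal whose C*-unitization is $\rmA(K^G)$, and Corollary \ref{cor:pointed-conditions} then gives pointedness directly.
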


%%%
\section{Preliminaries on operator systems} \label{sec:generalized-operator-systems}
%%%

In this section we will recall the notion of a matrix ordered operator space and introduce the notion of a generalized (i.e.\ potentially nonunital) operator system. For a reference on operator spaces and unital operator systems, we refer the reader to the books of Paulsen \cite{Pau2002} and Pisier \cite{Pis2003}.

Let $E$ be a self-adjoint operator space, i.e.\ such that $E = E^*$. We let $E_h = \{x \in E : x = x^*\}$ denote the set of self-adjoint elements in $E$. For $n \in \N$, we will write $\calM_n(E)$ for the operator space of $n \times n$ matrices over $E$, and we will write $\calM_n$ for $\calM_n(\C)$. A {\em matrix cone} over $E$ is a disjoint union $P = (P_n)_{n \in \mathbb{N}}$ of closed subsets $P_n \subseteq M_n(E)_h$ such that
\begin{enumerate}
\item $P_n \cap -P_n = 0$ for all $n \in \mathbb{N}$ and
\item $AP_nA^* \subseteq P_m$ for all $A \in M_{m,n}$ and $m,n \in \mathbb{N}$.
\end{enumerate}

\begin{definition}
A {\em matrix ordered operator space} is a pair $(E,P)$ consisting of a self-adjoint *-vector space $E$ and a matrix cone $P$ over $E$. For $n \in \N$, an element in $\calM_n(E)$ is {\em positive} if it belongs to $P_n$.
\end{definition}

\begin{remark}
When referring to a matrix ordered operator space, we will typically omit the positive cone unless we need to refer to it explicitly. Note that if $E$ is a matrix ordered operator space then for $m \in \mathbb{N}$, the space $\calM_m(E)$ is a matrix ordered operator space in a canonical way. Specifically, letting $P$ denote the matrix cone for $E$, $(\calM_m(E),Q)$ is a matrix ordered operator space, where $Q = (Q_n)_{n \in \mathbb{N}}$ is the matrix cone defined by identifying $\calM_m(\calM_n(E))$ with $\calM_{mn}(E)$ in the obvious way and setting $Q_n = P_{mn}$.
\end{remark}

Let $E$ be a matrix ordered operator space. An element $e \in E$ is an {\em archimedean order unit} for $E$ if for every $x \in E_h$, there is a scalar $\alpha > 0$ such that $-\alpha e \leq x \leq \alpha e$, and if $x + \alpha e \geq 0$ for all $\alpha > 0$, then $x \geq 0$. It is an {\em archimedean matrix order unit} for $E$ if for every $n \in \N$, $1_n \otimes e$ is an archimedean order unit for $\calM_n(E)$.

If $E$ is a matrix ordered operator space, then an archimedean matrix order unit $e \in E$ induces a norm $\|\cdot\|_e$ on $\calM_n(E)$ for each $n \in \N$, defined by
\[
\|x\|_e = \inf \left\{ \alpha > 0 : \left(\begin{matrix} \alpha 1_n \otimes e & x \\ x^* & \alpha 1_n \otimes e \end{matrix}\right) \geq 0 \right\} \quad \text{for} \quad x \in \calM_n(E).
\]

The next definition is equivalent to the definition of an operator system given by Choi and Effros \cite{CE1977}.

\begin{definition} \label{defn:unital-operator-system}
A {\em unital operator system} $S$ is a complete matrix ordered operator space with an archimedean matrix order unit $1_S$ that is {\em distinguished} in the sense that for each $n$, the norm on $\calM_n(S)$ coincides with the norm $\|\cdot\|_{1_S}$ from above.
\end{definition}

\begin{remark}
Although not strictly necessary, it will be convenient for the purposes of this paper to assume that operator systems are complete. If $S$ is a unital operator system, then the distinguished archimedean order unit $1_S$ is uniquely determined by the property that for $s \in S$ with $s \geq 0$, $\|s\| \leq 1$ if and only if $s \leq 1_S$.
\end{remark}

Let $(E,P)$ and $(F,Q)$ be matrix ordered operator spaces and let $\phi : E \to F$ be a bounded map. We will write $\phi_n : \calM_n(E) \to \calM_n(F)$ for the linear map defined by $\phi_n = \id_n \otimes \phi$. 

\begin{definition}
Let $(E,P)$ and $(F,Q)$ be matrix ordered operator spaces. A linear map $\phi : E \to F$ is {\em contractive} if $\|\phi\| \leq 1$, and {\em completely contractive} if $\phi_n$ is contractive for all $n \in \mathbb{N}$. It is {\em isometric} if $\|\phi(x)\| = \|x\|$ for all $x \in E$, and {\em completely isometric} if $\|\phi_n(x)\| = \|x\|$ for all $n \in \N$ and all $x \in \calM_n(E)$. Similarly, it is {\em positive} if $\phi(P_1) \subseteq Q_1$, and {\em completely positive} if $\phi_n$ is positive for each $n \in \mathbb{N}$. The map $\phi$ is a {\em complete order isomorphism} if it is completely positive and invertible with a completely positive inverse. It is a {\em complete order embedding} if it is completely positive and invertible on its range with a completely positive inverse.
\end{definition}

\begin{remark}
For unital operator systems, these definitions agree with the usual definitions. Furthermore, because the norm on a unital operator system is completely determined by the matrix order, a unital map between unital operator systems is completely isometric if and only if it is a complete order embedding. However, this is not true for arbitrary matrix ordered operator spaces (see \cite{Wer2002}).
\end{remark}

We will write $\UnOpSys$ for the category of unital operator systems with unital completely positive maps (equivalently, unital complete order homomorphisms) as morphisms. We will refer to unital complete order isomorphisms as {\em isomorphisms}, and to unital complete order embeddings as {\em embeddings}. 

Choi and Effros \cite{CE1977}*{Theorem 4.4} showed that every unital operator system is isomorphic to a concrete unital operator system, meaning that there is a unital completely isometric map into some $\calB(H)$, where $\calB(H)$ denotes the C*-algebra of bounded linear operators acting on a Hilbert space $H$. We will be interested in matrix ordered operator spaces satisfying an appropriate analogue of this property. 

Specifically, we are interested in matrix ordered operator spaces with a completely isometric complete order embedding into some $\calB(H)$. It turns out that not every matrix ordered operator space has this property. Following Connes and van Suijlekom \cite{CS2020}, we will make use of Werner's \cite{Wer2002} characterization of matrix ordered operator spaces with this property in terms of partial unitizations (see below), although other characterizations are also known (see e.g.\ \cite{Rus2015}).

The next definition is \cite{Wer2002}*{Definition 4.1} (see also \cite{CS2020}*{Definition 2.11}).

\begin{definition} \label{defn:partial-unitization}
Let $E$ be a matrix ordered operator space. The {\em partial unitization} of $E$ is the matrix ordered operator space $(E^\sharp,P)$, where $E^\sharp = E \oplus \bC$ and the matrix cone $P = (P_n)$ is defined by specifying that for each $n \in \N$, $P_n \subseteq \calM_n(E^\sharp)_h = \calM_n(E)_h \oplus (\calM_n)_h$ consists of all pairs $(x,\alpha) \in \calM_n(E)_h \oplus (\calM_n)_h$ satisfying
\[
\alpha \geq 0 \text{ and } \phi(\alpha_\epsilon^{-1/2} x \alpha_\epsilon^{-1/2}) \geq -1 \text{ for all } \epsilon > 0 \text{ and } \phi \in \ccp(E,\calM_n),
\]
where $\alpha_\epsilon = \alpha + \epsilon 1_n$ and $\ccp(E,\calM_n)$ denotes the space of completely contractive and completely positive maps from $E$ to $\calM_n$. We will refer to the map $E \to E^\sharp : x \to (x,0)$ as the {\em canonical inclusion map}, and we will refer to the map $E^\sharp \to \C : (x,\alpha) \to \alpha$ as the {\em projection onto the scalar summand}. 
\end{definition}

The next result is contained in \cite{Wer2002}*{Section 4} (see also \cite{CS2020}*{Proposition 2.12} and \cite{CS2020}*{Lemma 2.13}).

\begin{theorem} \label{thm:partial-unitization}
Let $E$ be a matrix ordered operator space.
\begin{enumerate}
\item The partial unitization $E^\sharp$ is a unital operator system.
\item Let $\iota : E \to E^\sharp$ denote the canonical inclusion map and let $\tau : E^\sharp \to \C$ denote the projection onto the scalar summand. Then $\iota$ is completely contractive and completely positive and $\tau$ is unital and positive, and the following sequence is split exact:
\[
\begin{tikzcd}
0 \arrow{r} & E \arrow{r}{\iota} & E^\sharp \arrow{r}{\tau} & \C \arrow{r} & 0\;.
\end{tikzcd}
\]
\item Let $F$ be a matrix ordered operator space and let $\phi : E \to F$ be a completely contractive and completely positive map. Then the unitization $\phi^\sharp : E^\sharp \to F^\sharp$ defined by $\phi^\sharp((x,\alpha)) = (\phi(x),\alpha)$ for $(x,\alpha) \in E^\sharp$ is unital and completely positive. Furthermore, if $\phi$ is a completely isometric complete order isomorphism then $\phi^\sharp$ is a unital complete order isomorphism.
\end{enumerate}
\end{theorem}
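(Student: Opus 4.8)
The plan is to exploit the fact that the cone $P$ in Definition \ref{defn:partial-unitization} is engineered precisely so that positivity in $E^\sharp$ is detected by the maps $\tilde\phi\colon E^\sharp\to\calM_n$, $\tilde\phi(x,\alpha)=\phi(x)+\alpha 1_n$, as $\phi$ ranges over $\ccp(E,\calM_n)$. A convenient first step is to rewrite the defining condition in a manifestly positive form: since $\alpha_\eps=\alpha+\eps 1_n$ is invertible for $\alpha\geq 0$ and $\eps>0$, conjugating the inequality $\phi_n(\alpha_\eps^{-1/2}x\alpha_\eps^{-1/2})\geq -1$ by the invertible matrix $\alpha_\eps^{1/2}\otimes 1_n$ and using the module identity $\phi_n(\beta y \gamma)=(\beta\otimes 1_n)\phi_n(y)(\gamma\otimes 1_n)$ for scalar matrices $\beta,\gamma\in\calM_n$ shows that $(x,\alpha)\in P_n$ if and only if $\alpha\geq 0$ and $\phi_n(x)+\alpha\otimes 1_n\geq 0$ for every $\phi\in\ccp(E,\calM_n)$ (the quantifier over $\eps$ collapses by closedness of the positive cone). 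With this reformulation the three parts separate cleanly, and the bulk of the work lies in (1).

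For (1), I would verify the matrix-cone axioms directly from the reformulated condition. Closedness of each $P_n$ and $P_n\cap -P_n=\{0\}$ are immediate. Congruence invariance $AP_nA^*\subseteq P_m$ for $A\in\calM_{m,n}$ follows from the module identity $\psi_m(AyA^*)=(A\otimes 1_m)\psi_n(y)(A^*\otimes 1_m)$, which rewrites $\psi_m(AxA^*)+(A\alpha A^*)\otimes 1_m$ as a congruence $(A\otimes 1_m)[\psi_n(x)+\alpha\otimes 1_m](A^*\otimes 1_m)$ of a positive element; here one needs that checking positivity against target level equal to the matrix level suffices (equivalently, that the condition is independent of the target level), a standard reduction via vector states that I would carry out or cite from Werner. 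To see that $1:=(0,1)$ is an archimedean matrix order unit, boundedness of $x$ in the operator-space norm together with complete contractivity of the $\phi$ gives $\phi_n(x)\geq -\|x\|\cdot 1$, so $(x,\alpha)+t(1_n\otimes 1)\geq 0$ for $t$ large; the archimedean condition is built into the reformulated cone, since $\phi_n(x)+(\alpha+\eps)\otimes 1_n\geq 0$ for all $\eps>0$ already forces $\phi_n(x)+\alpha\otimes 1_n\geq 0$. Finally, the order norm $\|\cdot\|_1$ is equivalent to the direct-sum norm on $E\oplus\C$, so $E^\sharp$ is complete and, being generated by an archimedean matrix order unit, is a unital operator system with that distinguished norm.

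Parts (2) and (3) are then largely formal. That $\iota(x)=(x,0)$ is completely positive is immediate from the reformulation ($\phi_n(x)\geq 0$ when $x\geq 0$), complete contractivity of $\iota$ follows by comparing $\|\cdot\|_1$ with the norm on $E$, and $\tau(x,\alpha)=\alpha$ is plainly unital and (completely) positive since $(x,\alpha)\in P_n$ forces $\alpha\geq 0$. The map $\sigma\colon\C\to E^\sharp$, $\sigma(\alpha)=(0,\alpha)$, is a unital completely positive splitting with $\tau\circ\sigma=\id_\C$ and $\ran\iota=\ker\tau$, giving split exactness. For (3), the key observation is functoriality of the cone under composition: if $\psi\in\ccp(F,\calM_m)$ then $\psi\circ\phi\in\ccp(E,\calM_m)$, and the module identity gives $\psi_m(\alpha_\eps^{-1/2}\phi_m(x)\alpha_\eps^{-1/2})=(\psi\circ\phi)_m(\alpha_\eps^{-1/2}x\alpha_\eps^{-1/2})$; hence $(x,\alpha)\in P_n^{E^\sharp}$ implies $(\phi_m(x),\alpha)\in P_n^{F^\sharp}$, so $\phi^\sharp$ is unital and completely positive. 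If moreover $\phi$ is a completely isometric complete order isomorphism, then $\phi^{-1}$ is again completely contractive and completely positive, so $(\phi^{-1})^\sharp=(\phi^\sharp)^{-1}$ is also unital completely positive, whence $\phi^\sharp$ is a unital complete order isomorphism.

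The main obstacle is the cone analysis underlying (1): establishing that the defining positivity condition is independent of the target matrix level and is invariant under congruence. The reformulation above reduces both to the module identity plus a vector-state reduction of positivity, but this is precisely the point where one must argue carefully (or invoke Werner's computations) rather than by pure formality; once it is in hand, the order-unit, completeness, and functoriality statements follow smoothly.
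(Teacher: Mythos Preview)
The paper does not prove this theorem at all; it simply records the statement and cites \cite{Wer2002}*{Section~4} and \cite{CS2020} for the proof. So there is no in-paper argument to compare your proposal against. What you have written is essentially a reconstruction of Werner's direct verification, and the outline is sound: the reformulation of the cone condition as $\phi_n(x)+\alpha\otimes 1_n\geq 0$ is correct, parts~(2) and~(3) do follow formally from it exactly as you describe, and you have correctly isolated the one nontrivial ingredient, namely showing that the defining positivity test (which uses only $\phi\in\ccp(E,\calM_n)$ with target dimension equal to the matrix level~$n$) is in fact independent of the target dimension, which is what one needs for congruence invariance $AP_nA^*\subseteq P_m$.

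One caution: your proposed ``vector-state reduction'' is a little too optimistic as stated. Compressing a ccp map $\psi\colon E\to\calM_m$ by a contraction $V\colon\C^n\to\C^m$ determined by a unit vector $\xi\in\C^n\otimes\C^m$ does give a ccp map $V^*\psi(\cdot)V\colon E\to\calM_n$, but when you unwind the computation the scalar part produces $\alpha\otimes V^*V$ rather than $\alpha\otimes 1_n$, and since $V^*V\leq 1_n$ the inequality runs the wrong way. The fix (which is Werner's) is to work instead with the original $\eps$-formulation and use the contraction $C=(A\alpha A^*+\eps)^{-1/2}A(\alpha+\delta)^{1/2}$ for suitably small $\delta$, or equivalently to argue via an explicit embedding of $E^\sharp$ into a concrete operator system. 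Either route works, but neither is a one-line vector-state trick. Your closing paragraph already signals awareness of this, so the proposal is fine as a sketch; just be prepared to either carry out that step in detail or cite \cite{Wer2002} for it.
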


\begin{remark}
Note that $E \ne E^\sharp$, even if $E$ is already unital. For a C*-algebra $A$, the partial unitization $A^\sharp$ coincides with the usual C*-algebraic unitization of $A$, and hence is a unital C*-algebra.
\end{remark}

It follows from the representation theorem of Choi and Effros \cite{CE1977} for unital operator systems that if $E$ is a matrix ordered operator space with partial unitization $E^\sharp$ and the canonical inclusion map $E \to E^\sharp$ is completely isometric, then there is a completely isometric complete order isomorphism of $E$ onto a self-adjoint subspace of bounded operators acting on a Hilbert space. Following \cite{CS2020}, this motivates the following definition. 

\begin{definition} \label{defn:generalized-operator-system}
We will say that a complete matrix ordered operator space $S$ is an {\em operator system} if the canonical inclusion map $S \to S^\sharp$ is completely isometric, in which case we will refer to $S^\sharp$ as the {\em unitization} of $S$.
\end{definition}
\begin{remark}
As in the unital case, it is not strictly necessary to assume that operator systems are complete. For an operator system $S$, we will identify $S$ with its image in $S^\sharp$ under the canonical inclusion map.	 In particular, if $T$ is an operator system and $\phi : S \to T$ is completely contractive and completely positive, then we will view the unitization $\phi^\sharp : S^\sharp \to T^\sharp$ as an extension of $\phi$. 
\end{remark}

\begin{remark}
If $S$ is a unital operator system, then it follows from \cite{Wer2002}*{Lemma 4.9} that the identity map on $S$ factors through the canonical inclusion map $S^\sharp$. In particular, this implies that the canonical inclusion map is completely isometric, so $S$ is an operator system in the sense of Definition \ref{defn:generalized-operator-system}. 
\end{remark}

\begin{remark} \label{rem:difficulty-complete-order-embeddings}
	Let $S$ and $T$ be operator systems and let $\phi : S \to T$ be a completely contractive completely positive map. If $\phi$ is a completely isometric complete order isomorphism, then Theorem \ref{thm:partial-unitization} implies that the unitization $\phi^\sharp : S^\sharp \to T^\sharp$ is a complete order isomorphism. However, if $\phi$ is merely a completely isometric complete order embedding, then it is not necessarily true that the unitization $\phi^\sharp$ is a complete order embedding (see Example \ref{ex:does-not-extend-to-complete-order-embedding}). We will need to take this into account when we define embeddings between operator systems below.
\end{remark}

In the following example, we construct operator systems $S$ and $T$ and a completely isometric complete order embedding $\phi : S \to T$ such that the unitization $S^\sharp \to T^\sharp$ is not a complete order embedding. The fundamental issue is that completely contractive completely positive maps on the image of $S$ in $T$ do not necessarily extend to completely contractive completely positive maps on $T$ (see \cite{Rus2015}*{Section 6}).

\begin{example} \label{ex:does-not-extend-to-complete-order-embedding}
Define $a,b \in \calM_2$ by
\[
a = \left[\begin{matrix} 1 & 0 \\ 0 & -1 \end{matrix}\right],\quad 
b = \left[\begin{matrix} 1 & 0 \\ 0 & -1/2 \end{matrix}\right].
\]
Let $S = \operatorname{span}\{a\}$ and $B = \operatorname{span}\{1_2,b\} \cong \C^2$. Then $S$ is a non-unital operator system and $B$ is a unital C*-algebra. Define $\phi : S \to B$ by $\phi(\alpha a) = \alpha b$ for $\alpha \in \C$. We claim that $\phi$ is a completely isometric complete order embedding, but that the unitization $\phi^\sharp : S^\sharp \to B^\sharp$ is not completely isometric.

Note that $\calM_n(S) = \operatorname{span}\{\alpha \otimes a : \alpha \in \calM_n \}$. Since
\[
\|\phi(\alpha \otimes a)\| = \|\alpha \otimes b\| = \|\alpha\| = \|\alpha \otimes a\|,
\]
$\phi$ is completely isometric. Also, $\alpha \otimes a \geq 0$ if and only if $\alpha \otimes b \geq 0$ if and only if $\alpha = 0$, so $\alpha$ is a complete order embedding.

It is not difficult to see that for $\lambda \in [-1,1]$ the map $\phi_\lambda : S \to \C$ defined by $\phi_\lambda(\alpha a) = \lambda \alpha$ for $\alpha \in \C$ is a quasistate, i.e.\ is completely contractive and completely positive. Furthermore, if $\psi : S \to \C$ is a quasistate, then $\psi = \phi_\lambda$ for some $\lambda \in [-1,1]$. Hence the set of quasistates on $S$ can be identified with $[-1,1]$.

We will see in Section \ref{prop:nc-state-space-unitization} that this implies that the state space of the unitization $S^\sharp$ is $[-1,1]$. Since $[-1,1]$ is a simplex, it follows from a classical result of Bauer that $S^\sharp = \operatorname{span} \{1_2,a\} \cong \C^2$ (see e.g.\ \cite{KS2019}).

Note that $B^\sharp \cong \C^3$. We can identify $B = \C^2$ with the first two coordinates of $\C^3$. Then $\phi^\sharp(\alpha 1_2 + \beta a) = \alpha 1_3 + \beta b$. In particular, $\phi^\sharp(\tfrac{1}{2} 1_2 + a) = \tfrac{1}{2} 1_3 + b$. Since $\tfrac{1}{2} 1_2 + a \not \geq 0$ but $\tfrac{1}{2} 1_3 + b \geq 0$, it follows that $\phi^\sharp$ is not a complete order embedding.
\end{example}

We will write $\OpSys$ for the category of operator systems with completely contractive and completely positive maps as morphisms. We will refer to completely isometric complete order isomorphisms as {\em isomorphisms}. Motivated by Remark \ref{rem:difficulty-complete-order-embeddings}, for operator systems $S$ and $T$, we will refer to a completely isometric complete order embedding $\phi : S \to T$ as an {\em embedding} if the unitization $\phi^\sharp : S^\sharp \to T^\sharp$ is an embedding in the category of unital operator systems.

Werner was able to isolate the precise obstruction to a matrix ordered operator space being an operator system in the sense of Definition \ref{defn:generalized-operator-system}. The next result is \cite{Wer2002}*{Lemma 4.8}.

\begin{theorem} \label{thm:werner-norm}
Let $E$ be a matrix ordered operator space with partial unitization $E^\sharp$. For each $n$, let $\nu_n : \calM_n(E) \to \R_{\geq 0}$ denote the map defined by
\[
\nu_n(x) = \sup_\phi \left| \phi \left(\begin{matrix} 0 & x \\ x^* & 0 \end{matrix}\right) \right|, \quad \text{for} \quad x \in \calM_n(E),
\]
where the supremum is taken over all maps $\phi \in \ccp(\calM_{2n}(E),\C)$. Then $\nu_n$ is a norm on $\calM_n(E)$. The inclusion $E \to E^\sharp$ is completely isometric if and only if for each $n$, the norm on $\calM_n(E)$ coincides with $\nu_n$.
\end{theorem}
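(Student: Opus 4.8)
The plan is to reduce the whole statement to the single identity $\nu_n(x)=\|\iota_n(x)\|_{\calM_n(E^\sharp)}$ for every $n$ and every $x\in\calM_n(E)$, where $\iota\colon E\to E^\sharp$ is the canonical inclusion. Since $\iota_n$ is injective and, by Theorem \ref{thm:partial-unitization}, $E^\sharp$ is a unital operator system (so the norms on $\calM_n(E^\sharp)$ are genuine norms), this identity simultaneously shows that each $\nu_n$ is a norm and yields the final equivalence: ``completely isometric'' means exactly that the given norm on each $\calM_n(E)$ agrees with the norm it inherits as a subspace of $E^\sharp$. So everything comes down to computing this inherited norm and recognizing it as $\nu_n$.

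To compute it, I would apply the order-unit norm formula for the unital operator system $\calM_n(E^\sharp)$ to $\iota_n(x)$. The $2\times 2$ dilation appearing there is exactly the pair $(w,\alpha 1_{2n})\in\calM_{2n}(E)\oplus\calM_{2n}=\calM_{2n}(E^\sharp)$, where $w=\left(\begin{smallmatrix}0&x\\x^{*}&0\end{smallmatrix}\right)$. Unwinding the definition of the cone $P$ on $E^\sharp$, and using that $(\alpha+\epsilon)1_{2n}$ is scalar, shows that $(w,\alpha 1_{2n})\geq 0$ iff $\chi_{2n}(w)\geq -\alpha 1$ for every $\chi\in\ccp(E,\calM_{2n})$. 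Conjugating $w$ by the scalar unitary $1_n\oplus(-1_n)$ sends $w$ to $-w$ and shows $\chi_{2n}(w)$ is unitarily equivalent to $-\chi_{2n}(w)$; hence the one-sided bound upgrades to $\|\chi_{2n}(w)\|\leq\alpha$, so that $\|\iota_n(x)\|_{\calM_n(E^\sharp)}=\sup_{\chi}\|\chi_{2n}(w)\|$. It then remains to prove $\sup_{\chi\in\ccp(E,\calM_{2n})}\|\chi_{2n}(w)\|=\sup_{\theta\in\ccp(\calM_{2n}(E),\C)}|\theta(w)|=\nu_n(x)$. One inequality is easy: given $\chi$, pick a unit eigenvector $\zeta$ of the self-adjoint operator $\chi_{2n}(w)$ for its extremal eigenvalue and set $\theta=\langle\chi_{2n}(\cdot)\zeta,\zeta\rangle$; as a composition of the completely contractive completely positive map $\chi_{2n}$ with a vector state, $\theta$ lies in $\ccp(\calM_{2n}(E),\C)$ and $|\theta(w)|=\|\chi_{2n}(w)\|$, giving $\|\iota_n(x)\|_{\calM_n(E^\sharp)}\leq\nu_n(x)$.

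For the reverse inequality I would pass from a functional to a map: given $\theta\in\ccp(\calM_{2n}(E),\C)$, define $\psi\colon E\to\calM_{2n}$ by $\langle\psi(e)e_j,e_i\rangle=\theta(e_{ij}\otimes e)$. Complete positivity of $\psi$ follows from property (2) of the matrix cone, since the element to which $\theta$ is applied is a compression of a positive matrix by a rectangular scalar matrix; complete contractivity follows from the identity $\langle\psi_k(a)\eta,\zeta\rangle=\theta(Z^{*}aY)$ together with the operator-space estimate $\|Z^{*}aY\|\leq\|Z\|_{\mathrm{op}}\|a\|\|Y\|_{\mathrm{op}}$, where $Z,Y$ are the rectangular scalar matrices of operator norm at most $1$ built from $\zeta,\eta$. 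Thus $\psi\in\ccp(E,\calM_{2n})$ and $\|\psi_{2n}(w)\|\leq\sup_\chi\|\chi_{2n}(w)\|=\|\iota_n(x)\|$, so it suffices to show $|\theta(w)|\leq\|\psi_{2n}(w)\|$.

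The main obstacle is exactly this last estimate. One has $\theta(w)=\langle\psi_{2n}(w)\Omega,\Omega\rangle$ with $\Omega=\sum_k e_k\otimes e_k$ the unnormalized maximally entangled vector, and a naive Cauchy--Schwarz bound loses a factor $2n$ because $\|\Omega\|^{2}=2n$. The point is that this factor must be absorbed using both the off-diagonal form of $w$ and the complete positivity of $\psi$: writing $\psi(e)=S^{*}\pi(e)S$ via the Stinespring dilation of the unital map $\psi^\sharp$ (Theorem \ref{thm:partial-unitization}(3)), the entangled vector is transported to a vector whose two halves are correlated through the isometry $S$, and the off-diagonal structure of $w$ forces the relevant matrix coefficient to be controlled by $\|\psi_{2n}(w)\|$ without the dimensional penalty (in the scalar case this reduces to the inequality $2|\rho_{21}|\leq\|\rho\|$ for a positive matrix $\rho$). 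Making this bookkeeping precise---equivalently, exhibiting the unital completely positive ``splitting'' map $\calM_{2n}(E^\sharp)\to(\calM_{2n}(E))^\sharp$ restricting to the identity on $\calM_{2n}(E)$---is where the real work lies and completes the identity $\nu_n(x)=\|\iota_n(x)\|_{\calM_n(E^\sharp)}$.
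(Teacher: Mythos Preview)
The paper does not contain a proof of this statement; it is quoted verbatim from Werner \cite{Wer2002}*{Lemma 4.8}. There is therefore no in-paper argument to compare yours against, and your attempt has to be judged on its own.

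Your overall strategy is sound: the theorem is equivalent to the single identity $\nu_n(x)=\|\iota_n(x)\|_{\calM_n(E^\sharp)}$, and your derivation of $\|\iota_n(x)\|=\sup_{\chi\in\ccp(E,\calM_{2n})}\|\chi_{2n}(w)\|$ from Definition~\ref{defn:partial-unitization} together with the $1_n\oplus(-1_n)$ symmetry is correct. The inequality $\|\iota_n(x)\|\leq\nu_n(x)$ by composing $\chi_{2n}$ with an eigenvector state is fine, as is the verification that the map $\psi$ you build from a given $\theta\in\ccp(\calM_{2n}(E),\C)$ lies in $\ccp(E,\calM_{2n})$.

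The reverse inequality, however, is genuinely incomplete, and you say so yourself. You reduce it to $|\theta(w)|\leq\|\psi_{2n}(w)\|$ with $\theta(w)=\langle\psi_{2n}(w)\Omega,\Omega\rangle$ and $\|\Omega\|^{2}=2n$, then leave the removal of the $2n$ penalty as a sketch (``Stinespring \ldots\ off-diagonal structure \ldots\ is where the real work lies''). Two problems: first, $E^\sharp$ is only an operator system, so invoking ``the Stinespring dilation of $\psi^\sharp$'' already hides a passage to a C*-cover that you do not specify; second, your scalar check $2|\rho_{21}|\leq\|\rho\|$ uses the positivity of $\psi(1)=\rho^{T}$, and for a general matrix ordered operator space there is no unit on which to evaluate $\psi$ (indeed the matrix cone of $E$ may be zero, making complete positivity of $\psi$ vacuous while complete positivity of $\theta$ on $\calM_{2n}(E)$ is not). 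So the scalar argument does not lift in any evident way, and it is not clear that the \emph{particular} $\psi$ you construct always satisfies the needed bound. Your alternative formulation---exhibiting a ucp splitting $\calM_{2n}(E^\sharp)\to(\calM_{2n}(E))^\sharp$ restricting to the identity on $\calM_{2n}(E)$---would indeed finish the proof, but constructing that map is equivalent to the very inequality you are trying to prove, so nothing has been gained. As it stands the argument is a correct reduction followed by an unresolved gap.
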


%%%
\section*{Acknowledgements}
%%%

The authors are grateful to Ken Davidson for a number of helpful comments. The authors are also grateful to the anonymous referee for their corrections and suggestions.

%%%
\section{Pointed noncommutative convex sets} \label{sec:pointed-nc-convex-sets}
%%%

A key result from \cite{DK2019} is the dual equivalence between the category of unital operator systems and the category of compact nc convex sets. In this section we will review the definition of a compact nc convex set and introduce the definition of a pointed compact nc convex set. In Section \ref{sec:categorical-duality}, we will show that the category of operator systems is dual to the category of pointed compact nc convex sets.

%%%
\subsection{Noncommutative convex sets}
%%%

Let $E$ be an operator space. For nonzero (potentially infinite) cardinals $m$ and $n$, let $\calM_{m,n}(E)$ denote the operator space of $m \times n$ matrices over $X$ with the property that the set of finite submatrices are uniformly bounded. For brevity, we will write $\calM_n(E)$ for $\calM_{n,n}(E)$, $\calM_{m,n}$ for $\calM_{m,n}(\C)$ and $\calM_n$ for $\calM_n(\C)$. Restricting to matrices with uniformly bounded finite submatrices ensures that matrices over $E$ can be multiplied on the left and right by scalar matrices of the appropriate size. We identify $\calM_n$ with the C*-algebra of bounded operators acting on a Hilbert space $H_n$ of dimension $n$.

If $E$ is a dual operator space with distinguished predual $E_*$, then there is a natural operator space isomorphism $\calM_n(E) \cong \operatorname{CB}(E_*,\calM_n)$, where $\operatorname{CB}(E_*,\calM_n)$ denotes the space of completely bounded maps from $E_*$ to $\calM_n$. We equip $\calM_n(E)$ with the corresponding point-weak* topology.

Let $\calM(E) = \sqcup_n \calM_n(E)$, where the union is taken over all nonzero cardinal numbers $n$. Once again, for brevity, we will write $\calM$ for $\calM(\C)$. Although $\calM(E)$ is a proper class and not a set, we will only be interested in subsets, so this will not present any set-theoretic difficulties. More generally, we will consider disjoint unions over nonzero cardinal numbers $n$ of subsets of $\calM_n(E)$. For a subset $X \subseteq \calM(E)$ and a cardinal number $n$, we will write $X_n$ for the graded component $X_n = X \cap \calM_n(E)$.

\begin{definition}
Let $E$ be an operator space. An {\em nc convex set} over $E$ is a graded subset $K = \sqcup_n K_n$ with $K_n \subseteq \calM_n(E)$ that is closed under direct sums and compressions, meaning that
\begin{enumerate}
\item $\sum \alpha_i x_i \alpha_i^* \in K_n$ for every bounded family of points $x_i \in K_{n_i}$ and every family of isometries $\alpha_i \in \calM_{n,n_i}$ satisfying $\sum \alpha_i \alpha_i^* = 1_n$.
\item $\beta^* x \beta \in K_n$ for every $x \in K_n$ and every isometry $\beta \in \calM_{m,n}$.
\end{enumerate}
If $E$ is a dual operator space, so that each $\calM_n(E)$ is equipped with the weak* topology discussed above, then we will say that $K$ is {\em closed} if each $K_n$ is closed. Similarly, we will say that $K$ is {\em compact} if each $K_n$ is compact.
\end{definition}

The most important examples of compact nc convex sets are noncommutative state spaces of operator systems. The next definition is \cite{DK2019}*{Example 2.2.6}.

\begin{definition}
Let $S$ be a unital operator system. The {\em nc state space} of $S$ is the set $K = \sqcup_n K_n$ defined by $K_n = \ucp(S,\calM_n)$. Here, $\ucp(S,\calM_n)$ denotes the space of unital completely positive maps from $S$ to $\calM_n$. Elements in $K$ are referred to as {\em nc states} on $S$. 
\end{definition}

\begin{remark}	
Note that the set $K$ is nc convex and compact since each $\ucp(S,\calM_n)$ is compact.
\end{remark}

The following characterization of compact nc convex sets as sets that are closed under nc convex combinations is often useful. In particular, it makes the analogy between nc convex sets and ordinary convex sets more explicit. The next result is \cite{DK2019}*{Proposition 2.2.8}.

\begin{proposition}
Let $E$ be a dual operator space and let $K = \sqcup_n K_n$ for closed subsets $K_n \subseteq \calM_n(E)$. Then $K$ is nc convex if and only if it is closed under {\em nc convex combinations}, meaning that $\sum \alpha_i^* x_i \alpha_i \in K_n$ for every bounded family of points $x_i \in K_{n_i}$ and every family $\alpha_i \in \calM_{n_i,n}$ satisfying $\sum \alpha_i^* \alpha_i = 1_n$.
\end{proposition}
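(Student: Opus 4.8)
The plan is to prove the two implications separately, with the reverse implication (closure under nc convex combinations $\Rightarrow$ nc convex) being essentially immediate and the forward implication (nc convex $\Rightarrow$ closure under nc convex combinations) carrying all the content.

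For the easy direction, I would observe that both defining conditions of an nc convex set are themselves special nc convex combinations. A compression $\beta^* x \beta$ with $\beta \in \calM_{m,n}$ an isometry is simply a one-term nc convex combination, since $\beta^*\beta = 1_n$ and a single point is a bounded family. For a direct sum $\sum \alpha_i x_i \alpha_i^*$ with isometries $\alpha_i \in \calM_{n,n_i}$ satisfying $\sum \alpha_i \alpha_i^* = 1_n$, setting $\beta_i = \alpha_i^* \in \calM_{n_i,n}$ gives $\sum \beta_i^* \beta_i = \sum \alpha_i \alpha_i^* = 1_n$ and $\sum \beta_i^* x_i \beta_i = \sum \alpha_i x_i \alpha_i^*$, so the direct sum is itself an nc convex combination. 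Hence closure under nc convex combinations immediately yields both axioms.

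For the main direction, the key idea is to realize an arbitrary nc convex combination as a compression of a single direct sum. Given a bounded family $x_i \in K_{n_i}$ and matrices $\alpha_i \in \calM_{n_i,n}$ with $\sum \alpha_i^* \alpha_i = 1_n$, I would let $m = \bigoplus_i n_i$, with canonical inclusion isometries $v_i \in \calM_{m,n_i}$ (so that $v_i^* v_j = \delta_{ij} 1_{n_i}$ and $\sum_i v_i v_i^* = 1_m$), and form the block-diagonal element $x = \sum_i v_i x_i v_i^*$. By the direct-sum axiom, $x \in K_m$. Next I would assemble the column isometry $\alpha = \sum_i v_i \alpha_i \in \calM_{m,n}$ and check that $\alpha^* \alpha = \sum_{i,j}\alpha_i^* v_i^* v_j \alpha_j = \sum_i \alpha_i^* \alpha_i = 1_n$, so that $\alpha$ is indeed an isometry. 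A direct computation using the orthogonality relations then gives $\alpha^* x \alpha = \sum_{i,j,k} \alpha_i^* v_i^* v_k x_k v_k^* v_j \alpha_j = \sum_k \alpha_k^* x_k \alpha_k$, and by the compression axiom this lies in $K_n$, as desired.

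The point requiring care — rather than a genuine obstacle — is the well-definedness of the possibly infinite direct sum $x$ and the termwise rearrangement $\alpha^* x \alpha = \sum_k \alpha_k^* x_k \alpha_k$ when the index set is infinite. This is exactly where the boundedness hypothesis on $\{x_i\}$ is used: uniform boundedness guarantees that $x$ is a legitimate element of $\calM_m(E)$, whose finite submatrices are uniformly bounded, and that the point-weak*-convergent sums behave as expected, so that the relations $v_i^* v_j = \delta_{ij} 1_{n_i}$ may be applied term by term. One should also note that the cardinal $m = \bigoplus_i n_i$ falls within the range over which $K$ is graded, which is automatic here since $K$ ranges over all nonzero cardinals.
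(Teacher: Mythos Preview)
The paper does not supply its own proof of this proposition; it is quoted verbatim as \cite{DK2019}*{Proposition 2.2.8} and referred to the original source. Your argument is correct and is exactly the standard one: the reverse implication is immediate since direct sums and compressions are particular nc convex combinations, and the forward implication proceeds by realizing an arbitrary nc convex combination $\sum \alpha_i^* x_i \alpha_i$ as the compression $\alpha^* x \alpha$ of the block-diagonal direct sum $x = \bigoplus x_i$ by the stacked isometry $\alpha$ whose blocks are the $\alpha_i$. Your remarks on the infinite case (boundedness of $\{x_i\}$ ensuring $x \in \calM_m(E)$, and the cardinal $m$ lying in the grading range) address the only points that require any care.
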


One of the most important justifications for the utility of noncommutative convexity is the fact that there is a robust notion of extreme point for which a noncommutative analogue of the Krein-Milman theorem \cite{DK2019}*{Theorem 6.4.2} holds, meaning that every compact nc convex set is generated by its extreme points.

\begin{definition}
Let $K$ be a compact nc convex set. A point $x \in K_n$ is {\em extreme} if whenever $x$ is written as a finite nc convex combination  $x = \sum \alpha_i^* x_i \alpha_i$ for $\{x_i \in K_{n_i}\}$ and nonzero $\{\alpha_i \in \calM_{n_i,n}\}$ satisfying $\sum \alpha_i^* \alpha_i = 1_n$, then each $\alpha_i$ is a positive scalar multiple of an isometry $\beta_i \in \calM_{n_i,n}$ satisfying $\beta_i^* x_i \beta_i = x$ and each $x_i$ decomposes with respect to the range of $\alpha_i$ as a direct sum $x_i = y_i \oplus z_i$ for $y_i,z_i \in K$ with $y_i$ unitarily equivalent to $x$. The set of all extreme points is $\partial K = \sqcup (\partial K)_n$.
\end{definition}

The morphism between nc convex sets are the continuous affine noncommutative maps. The next definition is \cite{DK2019}*{Definition 2.5.1}.

\begin{definition} \label{defn:nc-map}
Let $K$ and $L$ be compact nc convex sets. A map $\theta : K \to L$ is an {\em nc map} if it is graded, respects direct sums and is unitarily equivariant, meaning that
\begin{enumerate}
\item $\theta(K_n) \subseteq L_n$ for all $n$,
\item $\theta(\sum \alpha_i x_i \alpha_i^*) = \sum \alpha_i \theta(x_i) \alpha_i^*$ for every bounded family $\{x_i \in K_{n_i}\}$ and every family of isometries $\{\alpha_i \in \calM_{n_i,n}\}$ satisfying $\sum \alpha_i^* \alpha_i = 1_n$,
\item $\theta(\alpha^* x \alpha) = \alpha^* \theta(x) \alpha$ for every $x \in K_m$ and every unitary $\alpha \in \calM_n$.
\end{enumerate}
An nc map $\theta$ is {\em affine} if, in addition, it is equivariant with respect to isometries, meaning that
\begin{enumerate}
\item[(3')] $\theta(\alpha^* x \alpha) = \alpha^* \theta(x) \alpha$ for every $x \in K_m$ and every isometry $\alpha \in \calM_{m,n}$.	
\end{enumerate}
An affine nc map $\theta$ is {\em continuous} if the restriction $f|_{K_n}$ is continuous with respect to the point-strong topology on $K_n$ and $L_n$ for each $n$. It is {\em bounded} if $\|\theta\|_\infty < \infty$, where $\|\theta\|_\infty$ denotes the uniform norm $\|\theta\|_\infty = \sup_{x \in K} \|\theta(x)\|$. Finally, $\theta$ is an {\em (affine) homeomorphism} and $K$ and $L$ are {\em (affinely) homeomorphic} if $\theta$ is continuous and has a continuous (affine) nc inverse. 
\end{definition}

\begin{remark}
We will consider an appropriate notion of continuity for more general nc maps in Section \ref{sec:pointed-nc-functions}.
\end{remark}

We will write $\NCConv$ for the category of compact nc convex sets with continuous affine nc maps as morphisms. We will refer to affine nc homeomorphisms as {\em isomorphisms}, and to injective continuous affine nc maps as {\em embeddings}.

The next definition is \cite{DK2019}*{Definition 3.2.1}.

\begin{definition}
Let $K$ be a compact nc convex set. We will write $\rmA(K)$ for the unital operator system of all continuous affine nc functions from $K$ to $\calM$.
\end{definition}

\begin{remark}
The fact that $\rmA(K)$ is a unital operator system is discussed in \cite{DK2019}*{Section 3.2}.
\end{remark}

For a point $x \in K_n$, the corresponding evaluation map $\rmA(K) \to \calM_n : a \to a(x)$ is an nc state on $\rmA(K)$. Moreover, by \cite{DK2019}*{Theorem 3.2.2}, every nc state on $\rmA(K)$ is given by evaluation at some point in $K$ (we will say more about this in Section \ref{sec:categorical-duality}). It will be convenient to identify points in $K$ with the corresponding nc state on $\rmA(K)$. 

Following \cite{DK2019}*{Section 3.2}, for each $n$ we will identify the unital operator system $\calM_n(\rmA(K))$ with the space of continuous affine nc maps from $K$ to $\calM_n(\calM)$ in the obvious way.

%%%
\subsection{Pointed noncommutative convex sets}
%%%

In this section we introduce the notion of a pointed compact nc convex set, of which the most important examples will be nc quasistate spaces of operator systems. Before introducing the definition of a pointed compact nc convex set, we require the definition of a pointed continuous affine nc function.

\begin{definition}
Let $(K,z)$ be a pair consisting of a compact nc convex set $K$ and a point $z \in K_1$. We will say that a continuous affine nc function $a \in \rmA(K)$ is {\em pointed} if $f(z) = 0$. We let $\rmA(K,z) \subseteq \rmA(K)$ denote the space of pointed continuous affine nc functions on $K$.
\end{definition}

\begin{remark}
The space $\rmA(K,z)$ is a matrix ordered operator space with matrix cone $P = \sqcup P_n$ inherited from $\rmA(K)$. Specifically, for $n \in \N$, the positive cone on $P_n$ consists of the positive functions in $\calM_n(\rmA(K,z))$. Since $\rmA(K,z)$ is a closed self-adjoint subspace of the unital operator system $\rmA(K)$, it follows that $\rmA(K,z)$ is an operator system.
\end{remark}

The most important examples of pointed compact nc convex sets will be nc quasistate spaces of operator systems. The idea to utilize nc quasistate spaces in this setting was inspired by the importance of the quasistate space of a non-unital C*-algebra. 

\begin{definition} \label{defn:nc-quasistate-space}
Let $S$ be an operator system. The {\em nc quasistate space} of $S$ is the pair $(K,z)$, where $K = \sqcup_n K_n$ is defined by $K_n = \ccp(S,\calM_n)$ and $z \in K_1$ is the zero map. Here, $\ccp(S,\calM_n)$ denotes the space of completely contractive and completely positive maps from $S$ to $\calM_n$. We will refer to elements of $K$ as {\em nc quasistates} on $S$. 
\end{definition}

\begin{remark}
Note that the set $K$ is nc convex and compact since each $\ccp(S,\calM_n)$ is compact.
\end{remark}

We are now ready to introduce the definition of a pointed compact nc convex set.

\begin{definition} \label{defn:pointed-compact-nc-convex-set}
	Let $(K,z)$ be a pair consisting of a compact nc convex set $K$ and a point $z \in K_1$. We will say that $(K,z)$ is a {\em pointed compact nc convex set} if every nc quasistate on the operator system $\rmA(K,z)$ belongs to $K$, i.e.\ is evaluation at a point in $K$.
\end{definition}

\begin{remark}
Since $K$ is the nc state space of the unital operator system $\rmA(K)$, $(K,z)$ is a pointed compact nc convex set if and only if every nc quasistate on $\rmA(K,z)$ extends to an nc state on $\rmA(K)$. 	
\end{remark}

By definition, a pointed compact nc convex set is the nc quasistate space of an operator system. In Section \ref{sec:categorical-duality}, we will show that the nc quasistate space of every operator system is a pointed compact nc convex set. The proof of this fact is non-trivial. However, we are now able to give some examples. 

\begin{example} \label{ex:example-pointed-1}
Define $K = \sqcup K_n$ by
\[
K_n = \{\alpha \in (\calM_n)_h : -1_n \leq \alpha \leq 1_n \}, \quad \text{for} \quad n \in \N.
\]
Then $K$ is a compact nc convex set (see \cite{DK2019}*{Example 2.2.4}). Let $z = 0$. We will show that the pair $(K,z)$ is a pointed compact nc convex set.

The unital operator system $\rmA(K)$ is given by $\rmA(K) = \operatorname{span}\{1_{\rmA(K)},a\}$, where $a \in \rmA(K,z)$ is the coordinate function $a(\alpha) = \alpha$ for $\alpha \in K$. Hence $\rmA(K,z) = \operatorname{span}\{a\}$. In fact, $\rmA(K,z)$ and $\rmA(K)$ are isomorphic to the operator systems $S$ and $S^\sharp$ from Example \ref{ex:does-not-extend-to-complete-order-embedding}. 

If $\theta : \rmA(K,z) \to \calM_n$ is an nc quasistate, then there is a self-adjoint $\beta \in \calM_n$  with $-1_n \leq \beta \leq 1_n$ such that $\theta(\alpha a) = \alpha \beta$ for $\alpha \in \C$. Conversely, it is easy to check that every self-adjoint $\beta \in \calM_n$ with $-1_n \leq \beta \leq 1_n$ gives rise to an nc quasistate on $\rmA(K,z)$ of this form. Hence the nc quasistate space of $\rmA(K,z)$ is $K$. Therefore, $(K,z)$ is a pointed compact nc convex set. 

Note that $\rmA(K,z)^\sharp = \rmA(K)$. In Corollary \ref{cor:pointed-conditions}, we will show that this property characterizes pointed compact nc convex sets.
\end{example}

The next example shows that not every pair $(K,z)$ consisting of a compact nc convex set and a point $z \in K_1$ is a pointed compact nc convex set.

\begin{example} \label{ex:first-example-pointed-2}
Define $K = \sqcup K_n$ by
\[
K_n = \{\alpha \in (\calM_n)_h : -\tfrac{1}{2} 1_n \leq \alpha \leq 1_n \}, \quad \text{for} \quad n \in \N.
\]
Then as in Example \ref{ex:example-pointed-1}, $K$ is a compact nc convex set. Let $z = 0$. We will show that the pair $(K,z)$ is not a pointed compact nc convex set.

The unital operator system $\rmA(K)$ is given by $\rmA(K) = \operatorname{span}\{1_{\rmA(K)},b\}$, where $b \in \rmA(K,z)$ is the coordinate function $b(\alpha) = \alpha$ for $\alpha \in K$. Hence $\rmA(K,z) = \operatorname{span}\{b\}$. In fact, $\rmA(K)$ is isomorphic to the C*-algebra $B$ from Example \ref{ex:does-not-extend-to-complete-order-embedding}.

Define $\theta : \rmA(K,z) \to \C$ by $\theta(\alpha b) = -\alpha$. Since $\rmA(K,z)$ does not contain any positive elements, the matrix cone of $\rmA(K,z)$ is zero, so it is easy to check that $\theta$ is an nc quasistate. However, $\theta$ does not extend to an nc state on $\rmA(K)$ since $\tfrac{1}{2} 1_{\rmA(K)} + b \geq 0$, while $\tfrac{1}{2} + \theta(b) = -\tfrac{1}{2} \not \geq 0$. Hence $\theta$ does not belong to $K$ and $(K,z)$ is not a pointed compact nc convex set.
\end{example}

We will now establish a geometric characterization of pointed compact nc convex sets.

\begin{proposition} \label{prop:geometric-condition-pointed}
	Let $(K,z)$ be a pair consisting of a compact nc convex set $K$ and a point $z \in K_1$. Then $(K,z)$ is pointed if and only if whenever self-adjoint $a \in \calM_m(\rmA(K,z))$ satisfies $a(x) \leq 1_m \otimes 1_n$ for all $x \in K_n$, then $\theta_m(a) \leq 1_m \otimes 1_p$ for all nc quasistates $\theta : \rmA(K,z) \to \calM_p$.
\end{proposition}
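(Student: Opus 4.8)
The plan is to reformulate pointedness via the decomposition $\rmA(K) = \rmA(K,z) \oplus \C\,1_{\rmA(K)}$, which holds because $\rmA(K,z)$ is precisely the kernel of the nc state $\ev_z : \rmA(K) \to \C$ and $\ev_z(1_{\rmA(K)}) = 1 \neq 0$. Writing $S := \rmA(K,z)$ and recalling (from the remark following Definition \ref{defn:pointed-compact-nc-convex-set}) that $(K,z)$ is pointed exactly when every nc quasistate on $S$ extends to an nc state on $\rmA(K)$, together with the fact that positivity in $\calM_m(\rmA(K))$ is detected pointwise on $K$ (from \cite{DK2019}), the forward implication is immediate: if $(K,z)$ is pointed then any nc quasistate $\theta : S \to \calM_p$ is evaluation $\ev_w$ at some $w \in K_p$, so for self-adjoint $a \in \calM_m(S)$ with $a(x) \le 1_m \otimes 1_n$ for all $x \in K_n$ one gets $\theta_m(a) = a(w) \le 1_m \otimes 1_p$.

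For the converse I would fix an nc quasistate $\theta : S \to \calM_p$ and define the candidate extension $\tilde\theta : \rmA(K) \to \calM_p$ by $\tilde\theta(s + \lambda 1_{\rmA(K)}) = \theta(s) + \lambda 1_p$ for $s \in S$ and $\lambda \in \C$; this is well defined by the direct sum decomposition, is unital, and restricts to $\theta$. It then suffices to show $\tilde\theta$ is completely positive, i.e.\ that $\tilde\theta_m$ is positive for each $m$, since then $\tilde\theta$ is an nc state on $\rmA(K)$ extending $\theta$ and $(K,z)$ is pointed. So let $b \in \calM_m(\rmA(K))$ be positive and put $\alpha = b(z) \in (\calM_m)_h$; evaluating the positive element $b$ at $z \in K_1$ gives $\alpha \ge 0$. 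Writing $a = b - \alpha \otimes 1_{\rmA(K)} \in \calM_m(S)_h$, one has $\tilde\theta_m(b) = \theta_m(a) + \alpha \otimes 1_p$, so the goal reduces to the single lower bound $\theta_m(a) \ge -\alpha \otimes 1_p$. Pointwise positivity of $b$ says exactly that $(-a)(x) \le \alpha \otimes 1_n$ for all $x \in K_n$, so everything comes down to upgrading this pointwise bound on $-a$ to the bound $\theta_m(-a) \le \alpha \otimes 1_p$ on $\theta$.

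The main obstacle, and the whole point of the stated criterion, is that the hypothesis only controls elements whose ``scalar bound'' is exactly $1_m$, whereas here the bound is an arbitrary positive matrix $\alpha$; the plan is to remove $\alpha$ by normalization. When $\alpha$ is invertible I would conjugate by $\alpha^{-1/2}$: setting $c = \alpha^{-1/2}(-a)\alpha^{-1/2} \in \calM_m(S)_h$ (using the scalar $\calM_m$-bimodule action, which commutes with evaluation and with $\theta_m = \id_m \otimes \theta$), the bound becomes $c(x) \le 1_m \otimes 1_n$ for all $x$, so the hypothesis yields $\theta_m(c) = \alpha^{-1/2}\theta_m(-a)\alpha^{-1/2} \le 1_m \otimes 1_p$, and conjugating back by $\alpha^{1/2}$ gives $\theta_m(-a) \le \alpha \otimes 1_p$. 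For general $\alpha \ge 0$ I would replace $\alpha$ by $\alpha_\eps = \alpha + \eps 1_m$, which is invertible and still dominates $(-a)(x)$; the invertible case then gives $\theta_m(-a) \le \alpha \otimes 1_p + \eps\, 1_m \otimes 1_p$ for every $\eps > 0$, and letting $\eps \to 0$ and using that the positive cone of $\calM_{mp}$ is closed (equivalently the archimedean property) yields $\theta_m(-a) \le \alpha \otimes 1_p$. This furnishes $\theta_m(a) \ge -\alpha \otimes 1_p$, hence $\tilde\theta_m(b) \ge 0$, completing the argument.
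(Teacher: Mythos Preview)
Your proof is correct. The forward direction is identical to the paper's. For the converse, however, you take a genuinely different route: the paper argues by contrapositive, assuming $(K,z)$ is not pointed, picking an nc quasistate $\theta \notin K$, and invoking the nc separation theorem \cite{DK2019}*{Theorem 2.4.1} to produce a self-adjoint $a \in \calM_n(\rmA(K,z))$ with $a(x) \le 1_n \otimes 1_p$ for all $x \in K_p$ but $\theta(a) \not\le 1_n \otimes 1_n$. Your argument is instead constructive: you build the candidate unital extension $\tilde\theta$ directly and verify its complete positivity by the $\alpha_\eps^{-1/2}$ normalization trick, which is exactly the device appearing in Werner's definition of the partial unitization (Definition~\ref{defn:partial-unitization}). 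Your approach avoids the separation theorem entirely and makes the link to the partial unitization transparent; the paper's approach is shorter and shows that the separation theorem alone already encodes the obstruction.
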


\begin{proof}
If $(K,z)$ is pointed, then every nc quasistate on $\rmA(K,z)$ belongs to $K$, so the condition trivially holds. Conversely, suppose that $(K,z)$ is not pointed. Then there is an nc quasistate $\theta : \rmA(K,z) \to \calM_n$ such that $\theta \notin K$. Identifying $K$ with its image in $\calM(\rmA(K,z)^*)$ and viewing $\theta$ as a point in $\calM_n(\rmA(K,z)^*)$, the nc separation theorem \cite{DK2019}*{Theorem 2.4.1} implies there is a self-adjoint element $a \in \calM_n(\rmA(K,z))$ such that $\theta(a) \not \leq 1_n \otimes 1_n$ but $a(x) \leq 1_n \otimes 1_p$ for all $x \in K_p$. 
\end{proof}

Example \ref{ex:first-example-pointed-2} is a single instance of a general class of examples.

\begin{corollary} \label{cor:condition-zero-matrix-cone}
		Let $(K,z)$ be a pair consisting of a compact nc convex set and a point $z \in K_1$ such that the matrix cone for $\rmA(K,z)$ is zero. Then $(K,z)$ is pointed if and only if whenever $x \in K_n$ satisfies $\alpha z^{(n)} + (1-\alpha) x \in K_n$ for $0 < \alpha < 1$, then $\alpha z^{(n)} - (1 - \alpha) x \in K_n$. Here $z^{(n)} \in K_n$ denotes the direct sum of $n$ copies of $z$.
\end{corollary}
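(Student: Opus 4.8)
The plan is to deduce the Corollary from the geometric criterion in Proposition \ref{prop:geometric-condition-pointed}, using the zero-matrix-cone hypothesis to strip away all order-theoretic content and reduce pointedness to a symmetry statement about one-sided bounds of affine functions. First I would record the consequences of the cone being zero. Since $\rmA(K,z)$ has no nonzero positive element at any matrix level, the complete positivity required of a quasistate is automatic, so the nc quasistates on $\rmA(K,z)$ are exactly the completely contractive maps $\theta\colon\rmA(K,z)\to\calM_p$. Moreover, for self-adjoint $a\in\calM_m(\rmA(K,z))$ the norm inherited from $\rmA(K)$ is $\|a\|=\max\bigl(\sup_x\lambda_{\max}(a(x)),\ \sup_x\lambda_{\max}(-a(x))\bigr)$, where the suprema run over all $x\in K$.

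With this in hand I would rewrite the condition of Proposition \ref{prop:geometric-condition-pointed}. The hypothesis ``$a(x)\le 1_m\otimes 1_n$ for all $x$'' says exactly that $\sup_x\lambda_{\max}(a(x))\le 1$, while the conclusion ``$\theta_m(a)\le 1_m\otimes 1_p$ for every nc quasistate $\theta$'' amounts to $\sup_\theta\lambda_{\max}(\theta_m(a))\le 1$. Since every $\theta$ is a complete contraction this supremum is at most $\|a\|$, and it equals $\|a\|$ because $\pm\ev_{x_0}$ is an nc quasistate for each $x_0\in K$; hence the conclusion is equivalent to $\|a\|\le 1$. Using the zero-cone condition again to rule out the degenerate case $\sup_x\lambda_{\max}(a(x))=0$ with $a\ne 0$, homogeneity shows that $(K,z)$ is pointed if and only if $\sup_x\lambda_{\max}(a(x))=\sup_x\lambda_{\max}(-a(x))$ for every self-adjoint $a\in\calM_m(\rmA(K,z))$ and every $m$. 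In other words, pointedness is equivalent to the symmetry of the upper and lower support functions of $K$ relative to the pointed affine functions.

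It then remains to match this functional symmetry with the geometric reflection condition, and this is where I expect the real work to lie. In one direction, given $x\in K_n$ I would consider simultaneously the genuine convex combination $w=\alpha z^{(n)}+(1-\alpha)x\in K_n$, on which affineness together with $a(z^{(n)})=0$ gives $a(w)=(1-\alpha)a(x)$, and the reflected point $x'=\alpha z^{(n)}-(1-\alpha)x$ supplied by the hypothesis. The midpoint relation $\tfrac12(w+x')=\alpha z^{(n)}$ then lets me evaluate $a(x')$ in terms of $a(x)$ and the values of $a$ at scalar multiples of $z^{(n)}$, and letting $\alpha\to 0$ should convert the membership $x'\in K_n$ into the inequality $\sup_x\lambda_{\max}(-a(x))\le\sup_x\lambda_{\max}(a(x))$; applying the same argument to $-a$ yields equality, hence pointedness. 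For the converse I would argue contrapositively: a failure of the reflection condition exhibits a reflected point lying outside $K_n$, and feeding this point into the nc separation theorem \cite{DK2019}*{Theorem 2.4.1} --- exactly as in the proof of Proposition \ref{prop:geometric-condition-pointed} --- produces a self-adjoint pointed affine function whose upper and lower support values disagree, contradicting the symmetry and hence pointedness.

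The step I expect to be the main obstacle is precisely this matching between reflection of points and symmetry of the support functions. The difficulty is that continuous affine nc functions respect only nc convex combinations, not arbitrary affine or reflected combinations, so the value of $a$ at the reflected point $x'$ cannot be read off directly and must be accessed through the genuine convex combination $w$, the midpoint identity, and the vanishing $a(z^{(n)})=0$ supplied by the basepoint. Keeping careful track of which combinations are genuinely convex, and of the role of $z^{(n)}$ as the centre of the reflection, is where the argument must be handled with care.
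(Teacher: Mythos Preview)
Your overall strategy is sound and your reduction of Proposition~\ref{prop:geometric-condition-pointed} to the symmetry statement $\sup_x\lambda_{\max}(a(x))=\sup_x\lambda_{\max}(-a(x))$ is correct and nicely isolates the role of the zero-cone hypothesis. For the direction ``reflection $\Rightarrow$ pointed'' your argument is essentially the paper's: both evaluate the given $a$ at the reflected point $\alpha z^{(n)}-(1-\alpha)x\in K_n$, obtain $-(1-\alpha)a(x)\le 1$, and let $\alpha\to 0$. Your midpoint manoeuvre is not needed, though: each $a\in\rmA(K,z)$ is (on every level $K_n$) the restriction of a genuinely affine map on the ambient $\calM_n(E)$, so $a(\alpha z^{(n)}-(1-\alpha)x)=-(1-\alpha)a(x)$ holds outright. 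The paper simply writes this equality without further comment.

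Where you diverge from the paper is in ``pointed $\Rightarrow$ reflection''. The paper's argument is a one-liner: the map $\theta(a)=-(1-\alpha)a(x)$ is completely contractive (since $\mathrm{ev}_x$ is and $1-\alpha<1$) and trivially completely positive (zero cone), hence an nc quasistate; pointedness then places $\theta$ in $K$, and that point is precisely $\alpha z^{(n)}-(1-\alpha)x$. You already have all the ingredients for this---you even observe that $-\mathrm{ev}_{x_0}$ is a quasistate---so the direct argument is available to you and avoids the separation theorem entirely. Your contrapositive route via separation is not wrong, but it incurs a detail you do not address: the nc separation theorem hands you a self-adjoint element of $\calM_m(\rmA(K))$, not of $\calM_m(\rmA(K,z))$, so you must subtract off $a(z)\otimes 1_{\rmA(K)}$ and then check that the resulting pointed function still witnesses the failure of symmetry. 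This can be done, but it is extra work that the paper's direct construction sidesteps.
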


\begin{proof}
Suppose that $(K,z)$ is pointed and $x \in K_n$ satisfies $\alpha z^{(n)} + (1-\alpha) x \in K_n$ for $0 < \alpha < 1$. Then since the positive cone of $\rmA(K,z)$ is zero, the map $\theta : \rmA(K,z) \to \calM_n$ defined by $\theta(a) = \alpha a(z^{(n)}) - (1-\alpha) a(x) = -(1-\alpha) a(x)$ for $a \in \rmA(K,z)$ is an nc quasistate. Hence $\theta$ is given by evaluation at a point in $K$ which must be $\alpha z^{(n)} - (1-\alpha) x$. Hence $\alpha z^{(n)} - (1-\alpha) x \in K$. 

Conversely, suppose that whenever $x \in K_n$ satisfies $\alpha z^{(n)} + (1-\alpha) x \in K_n$ for $0 < \alpha < 1$, then $\alpha z^{(n)} - (1 - \alpha) x \in K_n$. If self-adoint $a \in \calM_m(\rmA(K,z))$ satisfies $a(x) \leq 1_m \otimes 1_n$ for all $x \in K_n$, then for $0 < \alpha < 1$,
\[
(1-\alpha) a(x) = a(\alpha z^{(n)} + (1-\alpha) x) \leq 1_m \otimes 1_n
\]
and 
\[
-(1-\alpha) a(x) = a(\alpha z^{(n)} - (1-\alpha) x) \leq 1_m \otimes 1_n.
\]
Then taking $\alpha \to 0$ implies
\[
-1_m \otimes 1_n \leq a(x) \leq 1_m \otimes 1_n.
\]
Hence $\|a\|_\infty \leq 1$. It follows that if $\theta : \rmA(K,z) \to \calM_n$ is an nc quasistate on $\rmA(K)$, then $\theta(a) \leq 1_m \otimes 1_n$. Therefore, by Proposition \ref{prop:geometric-condition-pointed}, $(K,z)$ is pointed.
\end{proof}

\begin{example}
Let $K$ denote the nc state space of $\calM_2$ and let $z = \operatorname{Tr}$, where $\operatorname{Tr} \in K_1$ denotes the normalized trace. Then identifying $\calM_2$ with $\rmA(K)$,
\[
\rmA(K,z) = \left\{ \left[\begin{matrix} \alpha & \beta \\ \gamma & -\alpha \end{matrix}\right] : \alpha,\beta,\gamma \in \C
  \right\}.
\]
The matrix cone of $\rmA(K,z)$ is clearly zero. Define $\theta : \calM_2 \to \C$ by
\[
\theta \left( \left[\begin{matrix} \alpha & \beta \\ \gamma & \delta \end{matrix}\right]\right) = \alpha \quad \text{for} \quad  \left[\begin{matrix} \alpha & \beta \\ \gamma & \delta \end{matrix}\right] \in \calM_2.
\]
Then $\frac{1}{2} \operatorname{Tr} + \frac{1}{2} \theta \in K_1$. But $\frac{1}{2} \operatorname{Tr} - \frac{1}{2} \theta \notin K_1$. Hence by Corollary \ref{cor:condition-zero-matrix-cone}, the pair $(K,z)$ is not pointed.
\end{example}

We now define the category of pointed compact nc convex sets. 

\begin{definition}
Let $(K,z)$ and $(L,w)$ be pointed nc convex sets. We will say that an affine nc map $\theta : K \to L$ is {\em pointed} if $\theta(z) = w$. We will say that $(K,z)$ and $(L,w)$ are {\em pointedly affinely homeomorphic} if there is a pointed affine homeomorphism from $(K,z)$ to $(L,w)$. 
\end{definition}

We will write $\PoNCConv$ for the category of pointed compact nc convex sets with pointed continuous affine nc maps as morphisms. We will refer to pointed affine nc homeomorphisms as {\em isomorphisms}, and to pointed injective continuous affine nc maps as {\em embeddings}.

%%%
\section{Categorical duality} \label{sec:categorical-duality}
%%%

In this section we will prove the dual equivalence between the category $\OpSys$ of operator systems and the category $\PoNCConv$ of pointed compact nc convex sets. We begin by reviewing the details of the dual equivalence between the category $\UnOpSys$ of unital operator systems and the category $\NCConv$ of compact nc convex sets from \cite{DK2019}.

%%%
\subsection{Categorical duality for unital operator systems}
%%%

The dual equivalence between the category of unital operator systems and the category of compact nc convex sets was developed in \cite{DK2019}*{Section 3}. It is closely related to a similar dual equivalence established by Webster and Winkler \cite{WW1999}.

The next result combines \cite{DK2019}*{Theorem 3.2.2} and \cite{DK2019}*{Theorem 3.2.3}. 

\begin{theorem} \label{thm:unital-dual-equivalence}
Let $K$ be a compact nc convex set. The nc state space of the unital operator system $\rmA(K)$ is isomorphic to $K$. For a unital operator system $S$ with nc state space $K$, the map $S \to \rmA(K) : s \to \hat{s}$ defined by
\[
\hat{s}(x) = x(s) \quad \text{for} \quad s \in S,\ x \in K
\]
is an isomorphism.
\end{theorem}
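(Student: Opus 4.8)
The plan is to exhibit mutually inverse correspondences between $K$ and the nc state space of $\rmA(K)$, and then deduce the second assertion from the first. Write $K' = \sqcup_n \ucp(\rmA(K),\calM_n)$ for the nc state space of $\rmA(K)$. I would first define $\ev \colon K \to K'$ by sending $x \in K_n$ to the functional $\ev_x \colon \rmA(K) \to \calM_n$, $a \mapsto a(x)$; since $a \geq 0$ forces $a(x) \geq 0$ and $1_{\rmA(K)}(x) = 1_n$, each $\ev_x$ is an nc state, and $\ev$ is routinely seen to be a continuous affine nc map. In the other direction, viewing a unital operator system $S$ with nc state space $K$ so that points of $K_n$ are literally the maps in $\ucp(S,\calM_n)$, I would consider $s \mapsto \hat s$ with $\hat s(x) = x(s)$. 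That $\hat s \in \rmA(K)$ is immediate from the definition of nc convex combinations, and the map $s\mapsto\hat s$ is unital and completely positive because each $x \in K$ is unital and completely positive.

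The first routine point is that $s \mapsto \hat s$ is a complete order embedding. One direction is the complete positivity just noted. For the converse, given self-adjoint $[s_{ij}] \in \calM_m(S)$ that is not positive, I would produce a point of $K$ detecting this: fixing a unital complete order embedding $S \subseteq \calB(H)$ (Choi--Effros), there is $\xi = (\xi_1,\dots,\xi_m) \in H^{(m)}$ with $\langle [s_{ij}]\xi,\xi\rangle < 0$, and compressing to the finite-dimensional subspace $N = \Span\{\xi_1,\dots,\xi_m\}$, identified with $\C^p$ for $p \leq m$, yields a unital completely positive map $\phi \in K_p$ with $\langle \phi_m([s_{ij}])\xi,\xi\rangle = \langle [s_{ij}]\xi,\xi\rangle < 0$. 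Thus $\widehat{[s_{ij}]}(\phi) \not\geq 0$, so $\widehat{[s_{ij}]} \not\geq 0$ in $\calM_m(\rmA(K))$, as required. Injectivity of $\ev$, on the other hand, follows from the nc separation theorem \cite{DK2019}*{Theorem 2.4.1}, which guarantees that $\rmA(K)$ separates the points of $K$.

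The crux is surjectivity of both maps, and I expect the surjectivity of $\ev$ — that every nc state on $\rmA(K)$ is a point evaluation — to be the main obstacle. Here the plan is to pass to the C*-algebra $\rmC(K)$ of continuous nc functions on $K$, which contains $\rmA(K)$ as a unital operator subsystem. Given $\psi \in \ucp(\rmA(K),\calM_n)$, I would extend it by Arveson's extension theorem to a unital completely positive map $\Psi \colon \rmC(K) \to \calM_n$ and take a minimal Stinespring dilation $\Psi = V^*\pi(\cdot)V$ with $\pi \colon \rmC(K) \to \calB(H)$ a unital $*$-representation and $V \colon \C^n \to H$ an isometry. The key imported ingredient from the theory of $\rmC(K)$ in \cite{DK2019} is that every such representation is, up to unitary equivalence, evaluation at a point $y \in K_{\dim H}$, so that $f(y) = \pi(f)$ for continuous nc $f$. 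Setting $x = V^* y V$, which lies in $K_n$ since $K$ is closed under compression by isometries, affineness of $a \in \rmA(K)$ gives $\psi(a) = V^* a(y) V = a(V^* y V) = a(x)$, whence $\psi = \ev_x$. This shows $\ev$ is an isomorphism $K \cong K'$, the first assertion.

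Finally, I would reduce surjectivity of $s \mapsto \hat s$ to the first assertion by duality. As $s \mapsto \hat s$ is a unital complete order embedding, $\hat S$ is a closed unital operator subsystem of $\rmA(K)$; suppose toward a contradiction that some self-adjoint $a \in \rmA(K)$ lies outside $\hat S$. By Hahn--Banach there is a bounded self-adjoint functional $f$ on $\rmA(K)$ vanishing on $\hat S$ with $f(a) \neq 0$, and since $f(1_{\rmA(K)}) = 0$ its Jordan decomposition expresses $f$ as a positive multiple of a difference $\mu_+ - \mu_-$ of states that agree on $\hat S$ but differ at $a$. By the first assertion $\mu_\pm = \ev_{x_\pm}$ for points $x_\pm \in K_1$, and agreement on $\hat S$ reads $x_+(s) = x_-(s)$ for all $s \in S$, forcing $x_+ = x_-$ and hence $\mu_+ = \mu_-$, contradicting $\mu_+(a) \neq \mu_-(a)$. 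Therefore $\hat S = \rmA(K)$ and $s \mapsto \hat s$ is a unital complete order isomorphism. The one genuinely nontrivial input is the representation theorem for $\rmC(K)$ invoked in the third paragraph; everything else is a separation or compression argument.
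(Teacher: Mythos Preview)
The paper does not supply its own proof here: the statement is quoted from \cite{DK2019}*{Theorems 3.2.2 and 3.2.3} as background, so there is no in-paper argument to compare against.

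Your proposal, however, has a genuine circularity. To show that every $\psi\in\ucp(\rmA(K),\calM_n)$ is a point evaluation you pass to $\rmC(K)$, dilate via Stinespring to a $*$-representation $\pi$, and then import from \cite{DK2019} the fact that every unital $*$-representation of $\rmC(K)$ is some $\delta_y$. But that fact rests on the very assertion you are proving: since $\rmA(K)$ generates $\rmC(K)$, a $*$-representation $\pi$ equals $\delta_y$ if and only if the ucp map $\pi|_{\rmA(K)}$ equals $\ev_y$, and identifying ucp maps on $\rmA(K)$ with points of $K$ is exactly the first part of the theorem. In \cite{DK2019} the $\rmC(K)$ material (Theorem 4.4.3 there, appearing in this paper as Theorem~\ref{thm:maximal-unital-c-star-cover} and Theorem~\ref{thm:unital-enveloping-von-neumann}) is developed \emph{after} and \emph{using} Theorems 3.2.2--3.2.3, so invoking it here is not legitimate. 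A non-circular route bypasses $\rmC(K)$ entirely and works with an embedding $K\subseteq\calM(E)$ for a dual operator space $E$: the coordinate functionals coming from the predual $E_*$, together with constants, determine $\rmA(K)$, so a given $\psi$ produces a candidate $x_\psi\in\calM_n(E)$ which one shows lies in $K_n$ via the nc separation theorem, and then $\psi=\ev_{x_\psi}$. Your complete-order-embedding argument and your Hahn--Banach/Jordan-decomposition surjectivity argument for $s\mapsto\hat s$ are fine once the first assertion is obtained without the detour through $\rmC(K)$.
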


The dual equivalence between the category $\UnOpSys$ of unital operator systems and the category $\NCConv$ of compact nc convex sets follows from Theorem \ref{thm:unital-dual-equivalence}. The contravariant functor $\UnOpSys \to \NCConv$ is defined in the following way:
\begin{enumerate}
\item A unital operator system $S$ is mapped to its nc state space.
\item For unital operator systems $S$ and $T$ with nc state spaces $K$ and $L$ respectively, a morphism $\phi : S \to T$ is mapped to the morphism $\phi^d : L \to K$ defined by
\[
\phi^d(y)(a) = \phi(a)(y), \quad \text{for} \quad y \in L \text{ and } a \in \rmA(K).
\]
\end{enumerate}
The inverse functor $\NCConv \to \UnOpSys$ is defined in the following way:
\begin{enumerate}
	\item A compact nc convex set $K$ is mapped to the unital operator system $\rmA(K)$.
	\item If $K$ and $L$ are compact nc convex sets and $\psi : L \to K$ is a morphism, then the corresponding morphism $\psi^d : \rmA(K) \to \rmA(L)$ is defined by
\[
\psi^d(a)(y) = a(\psi(y)), \quad \text{for} \quad a \in \rmA(K) \text{ and } y \in L.
\]
\end{enumerate}
The next result summarizes this discussion. It is \cite{DK2019}*{Theorem 3.2.5}.

\begin{theorem} \label{thm:unital-categorical-equivalence}
The contravariant functors $\UnOpSys \to \NCConv$ and $\NCConv \to \UnOpSys$ defined above are inverses. Hence the categories $\UnOpSys$ and $\NCConv$ are dually equivalent.
\end{theorem}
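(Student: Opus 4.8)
The plan is to treat Theorem~\ref{thm:unital-dual-equivalence} as the substantive input and reduce the present statement to the formal verification that the two assignments are functorial and that the object-level isomorphisms supplied by Theorem~\ref{thm:unital-dual-equivalence} are natural. Write $F : \UnOpSys \to \NCConv$ and $G : \NCConv \to \UnOpSys$ for the two contravariant assignments described above. Recall the standard criterion: to prove that $\UnOpSys$ and $\NCConv$ are dually equivalent it suffices to exhibit $F$ and $G$ together with natural isomorphisms $\eta : \id_{\UnOpSys} \Rightarrow G \circ F$ and $\epsilon : \id_{\NCConv} \Rightarrow F \circ G$ (here $G \circ F$ and $F \circ G$ are covariant, so these transformations make sense).

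First I would check that $F$ and $G$ are genuine contravariant functors. For a morphism $\phi : S \to T$ in $\UnOpSys$, the map $\phi^d$ is precomposition by $\phi$ on nc states, $\phi^d(y) = y \circ \phi$; this preserves unital complete positivity, is graded, and respects direct sums, compressions, and point-strong continuity, hence is a morphism in $\NCConv$. Dually, for $\psi : L \to K$ in $\NCConv$, the assignment $\psi^d(a) = a \circ \psi$ sends a continuous affine nc function on $K$ to one on $L$, is plainly unital, and is completely positive since $a(\psi(y)) \geq 0$ whenever $a \geq 0$ and $\psi(y) \in K$. In both cases $\id^d = \id$ and $(\phi \circ \phi')^d = (\phi')^d \circ \phi^d$ are immediate from the defining formulas.

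Next I would produce the two natural isomorphisms. By Theorem~\ref{thm:unital-dual-equivalence}, for each unital operator system $S$ with nc state space $K$ the component $\eta_S : S \to \rmA(K) = G(F(S))$, $s \mapsto \hat{s}$, is an isomorphism, so only naturality remains; given $\phi : S \to T$ one has $G(F(\phi)) = (\phi^d)^d$, and unwinding the definitions gives, for $s \in S$ and $y \in F(T)$,
\[
(\phi^d)^d(\hat{s})(y) = \hat{s}(\phi^d(y)) = \phi^d(y)(s) = y(\phi(s)) = \widehat{\phi(s)}(y),
\]
so $G(F(\phi)) \circ \eta_S = \eta_T \circ \phi$. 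Symmetrically, Theorem~\ref{thm:unital-dual-equivalence} gives that for each compact nc convex set $K$ the evaluation map $\epsilon_K : K \to F(G(K))$, $x \mapsto \ev_x$, is an affine nc isomorphism, and for $\psi : L \to K$ the same bookkeeping yields, for $y \in L$ and $a \in \rmA(K)$,
\[
(\psi^d)^d(\ev_y)(a) = \ev_y(\psi^d(a)) = \psi^d(a)(y) = a(\psi(y)) = \ev_{\psi(y)}(a),
\]
so $F(G(\psi)) \circ \epsilon_L = \epsilon_K \circ \psi$. This exhibits $\eta$ and $\epsilon$ as natural isomorphisms and completes the argument.

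The real weight of the theorem lies entirely in the two bijectivity statements of Theorem~\ref{thm:unital-dual-equivalence}, namely that every nc state on $\rmA(K)$ is evaluation at a point of $K$ and that $s \mapsto \hat{s}$ is a surjective complete order isomorphism onto $\rmA(K)$; once these are granted, the present theorem is a routine diagram chase. I therefore expect no serious obstacle here, only a notational one: one must keep careful track of the identification $S \cong \rmA(F(S))$ under which $G \circ F$ operates, since the defining formula $\phi^d(y)(a) = \phi(a)(y)$ already presupposes it, and a sign or variable slip in that identification is the likeliest source of error in the naturality computations above.
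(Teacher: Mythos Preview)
Your proposal is correct. The paper does not actually prove this theorem: it states the result as a summary of the preceding discussion and cites \cite{DK2019}*{Theorem 3.2.5} directly, so there is no in-paper argument to compare against. Your write-up supplies precisely the routine categorical verification that the paper elides---functoriality of $F$ and $G$, and naturality of the component isomorphisms furnished by Theorem~\ref{thm:unital-dual-equivalence}---and the computations you give are correct.
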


We will make use of the following result in the next section.

\begin{proposition} \label{prop:unital-complete-order-embedding}
Let $K$ and $L$ be compact nc convex sets. Let $\phi : \rmA(K) \to \rmA(L)$ be a unital completely positive map and let $\phi^d : L \to K$ denote the continuous affine map obtained by applying Theorem \ref{thm:unital-categorical-equivalence} to $\phi$. Then $\phi$ is completely isometric if and only if $\phi^d$ is surjective.
\end{proposition}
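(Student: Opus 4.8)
The plan is to use the fact that, because the norm on a unital operator system is determined by its matrix order, the unital completely positive map $\phi$ is completely isometric if and only if it is a complete order embedding, i.e.\ if and only if $\phi_n(a) \geq 0$ implies $a \geq 0$ for every $n$ and every self-adjoint $a \in \calM_n(\rmA(K))$. I would then translate this order-reflecting property into a statement about $\phi^d$. The key observation is that, under the identification of points of $L$ and $K$ with nc states on $\rmA(L)$ and $\rmA(K)$, the dual map satisfies $\phi^d(y) = y \circ \phi$, and hence $\phi_n(a)(y) = a(\phi^d(y))$ for all $y \in L$ and all $a \in \calM_n(\rmA(K))$. Since positivity in $\calM_n(\rmA(K))$ is pointwise (a self-adjoint $a$ is positive precisely when $a(x) \geq 0$ for all $x \in K$), this shows that $\phi_n(a) \geq 0$ if and only if $a$ is positive on the image $C := \phi^d(L)$. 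Thus $\phi$ is a complete order embedding if and only if positivity of a self-adjoint element on $C$ already forces positivity on all of $K$.

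First I would record that $C = \phi^d(L)$ is a compact nc convex subset of $K$: it is graded and closed under nc convex combinations because $\phi^d$ is an affine nc map, and each $C_n = \phi^d(L_n)$ is compact as the continuous image of the compact set $L_n$. With this in hand, the easy direction is immediate: if $\phi^d$ is surjective then $C = K$, so positivity on $C$ and positivity on $K$ are the same condition, $\phi$ reflects positivity, and therefore $\phi$ is a complete order embedding and hence completely isometric.

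For the converse I would argue by contraposition. Suppose $\phi^d$ is not surjective, so there is some $x_0 \in K_n \setminus C_n$. Since $C$ is a compact nc convex set and $x_0 \notin C$, the nc separation theorem \cite{DK2019}*{Theorem 2.4.1} produces a self-adjoint $a \in \calM_n(\rmA(K))$ with $a(x) \leq 1_n \otimes 1_p$ for every $x \in C_p$ and all $p$, but $a(x_0) \not\leq 1_n \otimes 1_n$. Setting $b = 1_n \otimes 1_{\rmA(K)} - a$, the element $b$ is self-adjoint, positive on $C$, and satisfies $b(x_0) \not\geq 0$. Consequently $\phi_n(b) \geq 0$ while $b \not\geq 0$, so $\phi$ is not a complete order embedding and therefore not completely isometric, as desired.

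I expect the main obstacle to be the verification that the separation can be carried out inside $K$ with a separating function that actually lies in $\calM_n(\rmA(K))$ (rather than merely in the larger space of weak*-continuous affine nc functions on $\calM(\rmA(K)^*)$), so that $\phi_n(b)$ makes sense and the pointwise identity $\phi_n(b)(y) = b(\phi^d(y))$ can be applied; confirming that $C$ is genuinely compact nc convex, so that \cite{DK2019}*{Theorem 2.4.1} applies, is the other point requiring care. The conversion from the order bound supplied by separation to a genuine failure of positivity is then routine.
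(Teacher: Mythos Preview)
Your argument is correct, but it follows a genuinely different route from the paper's proof. For the easy direction the paper computes norms directly, writing $\|\phi(a)\|_\infty = \sup_{y \in L} \|a(\phi^d(y))\| = \sup_{x \in K} \|a(x)\| = \|a\|_\infty$ when $\phi^d$ is onto, whereas you pass through the equivalent order-embedding formulation; these are essentially the same. The real divergence is in the converse. The paper argues constructively: assuming $\phi$ is completely isometric, it sets $S = \phi(\rmA(K))$, lets $M$ be the nc state space of $S$, invokes Arveson's extension theorem to show that the restriction map $r : L \to M$ is surjective, and then factors $\phi^d = \psi \circ r$ with $\psi : M \to K$ the affine homeomorphism coming from Theorem~\ref{thm:unital-categorical-equivalence}. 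You instead argue by contraposition, separating a point $x_0 \notin C = \phi^d(L)$ from the compact nc convex set $C$ via \cite{DK2019}*{Theorem~2.4.1} to produce a concrete witness $b$ with $\phi_n(b) \geq 0$ but $b \not\geq 0$. Your approach trades Arveson's extension theorem for the nc separation theorem and is arguably more self-contained within the noncommutative-convexity framework; the paper's approach, on the other hand, yields the explicit factorization $\phi^d = \psi \circ r$, which makes transparent exactly how surjectivity arises. Your anticipated obstacles are not serious: $C$ is indeed compact nc convex (each $C_n$ is the continuous image of the compact $L_n$, and affinity of $\phi^d$ handles nc convex combinations), and the separating functional produced by \cite{DK2019}*{Theorem~2.4.1} does lie in $\calM_n(\rmA(K))$ since $\rmA(K)$ is the predual in play.
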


\begin{proof}
If $\phi^d$ is surjective, then for $a \in \calM_n(\rmA(K))$,
\[
\|\phi(a)\|_\infty = \sup_{y \in L} \|\phi(a)(y)\|_\infty = \sup_{y \in L} \|a(\phi^d(y))\|_\infty  = \sup_{x \in K} \|a(x)\| = \|a\|_\infty.
\]
Hence $\phi$ is completely isometric.

Conversely, suppose that $\phi$ is completely isometric. Let $S = \phi(\rmA(K))$. Then $S$ is a unital operator system. Let $M$ denote the nc state space of $S$, so that $S$ is isomorphic to $\rmA(K)$. It follows from Arveson's extension theorem that the restriction map $r : L \to M$ is surjective. Let $\psi : M \to K$ denote the continuous affine nc map obtained by restricting the range of $\phi$ to $S$ and applying Theorem \ref{thm:unital-categorical-equivalence}. Then $\phi^d = \psi \circ r$. Theorem \ref{thm:unital-categorical-equivalence} implies that $\psi$ is an affine homeomorphism. Since $r$ is surjective, it follows that $\phi^d$ is surjective.
\end{proof}

%%%
\subsection{Categorical duality for operator systems}
%%%

Let $(K,z)$ be a pair consisting of a compact nc convex set $K$ and a point $z \in K_1$. Observe that for a point $x \in K$, viewed as a unital completely positive map on $\rmA(K)$, the restriction $x|_{\rmA(K,z)}$ is an nc quasistate. For brevity, it will be convenient to simultaneously view points in $K$ as nc states on $\rmA(K)$ and nc quasistates on $\rmA(K,z)$. We will take care to ensure that this does not cause any confusion.

%The proof of the next result foreshadows some of the results that we will consider in Section \ref{sec:categorical-duality}.

If $(K,z)$ is the nc quasistate space of an operator system $S$, then it follows as in \cite{Wer2002}*{Lemma 4.9} that the extension $x^\sharp : S^\sharp \to \calM_n$ defined by $x^\sharp(s,\alpha) = x(s) + \alpha 1_n$ is unital and completely positive, and hence is an nc state on $S^\sharp$. Moreover, it is the unique extension of $x$ to an nc state on $S^\sharp$ with range in $\calM_n$. Here we have identified $S^\sharp$ with $S \oplus \C$ as in Definition \ref{defn:partial-unitization}. Note that $S = \ker z^\sharp$. 

% For $x \in K$, it follows from \cite{Wer2002}*{Lemma 4.9} that the extension $x^\sharp : S^\sharp \to \caM_n$ defined by $x^\sharp(s,\alpha) = x(s) + \alpha 1_n$ is unital and completely positive, and hence defines an nc state on $S^\sharp$. Here we have identified $S^\sharp$ with $S \oplus \C$ as in Definition \ref{defn:partial-unitization}.

\begin{proposition} \label{prop:nc-state-space-unitization}
Let $S$ be an operator system with nc quasistate space $(K,z)$ and let $L$ denote the nc state space of the unitization $S^\sharp$. For an nc quasistate $x \in K$, let $x^\sharp \in L$ be the nc state defined as above. Then the map $K \to L : x \to x^\sharp$ is an affine homeomorphism with inverse given by the restriction map $L \to K : y \to y|_S$. Hence $S^\sharp$ is isomorphic to $\rmA(K)$. 
\end{proposition}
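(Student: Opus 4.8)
The plan is to prove everything by directly exhibiting the restriction map $L \to K : y \mapsto y|_S$ as a two-sided inverse to the map $K \to L : x \mapsto x^\sharp$, checking that both are continuous affine nc maps, and then reading off the final isomorphism $S^\sharp \cong \rmA(K)$ from the unital duality of Theorem \ref{thm:unital-dual-equivalence}. The well-definedness of $x \mapsto x^\sharp$ and the fact that $x^\sharp$ is the \emph{unique} unital extension of $x$ with range in $\calM_n$ are already recorded in the paragraph preceding the statement (following \cite{Wer2002}*{Lemma 4.9}), so that part of the argument may be invoked rather than reproved.

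First I would check that the restriction map is well defined, i.e.\ that $y|_S \in K$ for $y \in L_n = \ucp(S^\sharp,\calM_n)$. Writing $\iota : S \to S^\sharp$ for the canonical inclusion, the composite $y|_S = y \circ \iota$ is completely positive, and it is completely contractive because $y$ is completely contractive (being unital completely positive) and $\iota$ is completely isometric by the definition of an operator system; hence $y|_S \in \ccp(S,\calM_n) = K_n$. Next I would verify that the two maps are mutually inverse. One composite is immediate from the definitions: $(x^\sharp)|_S(s) = x^\sharp(s,0) = x(s)$. For the other, unitality of $y$ gives $y(0,1) = y(1_{S^\sharp}) = 1_n$, so that $y(s,\alpha) = y(s,0) + \alpha y(0,1) = (y|_S)(s) + \alpha 1_n = (y|_S)^\sharp(s,\alpha)$; this is exactly the uniqueness of the unital extension noted before the proposition.

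Then I would confirm that both maps respect the nc structure and are continuous, so that $x \mapsto x^\sharp$ is an affine homeomorphism onto $L$. Affineness of $x \mapsto x^\sharp$ is a short computation: since $\sum \alpha_i \alpha_i^* = 1_n$ for a family of isometries with $\sum \alpha_i^*\alpha_i = 1_n$, and $\gamma^*\gamma = 1_m$ for an isometry $\gamma \in \calM_{n,m}$, the scalar summand $\beta 1_n$ is reproduced correctly under direct sums and compressions, giving $(\sum \alpha_i x_i \alpha_i^*)^\sharp = \sum \alpha_i x_i^\sharp \alpha_i^*$ and $(\gamma^* x \gamma)^\sharp = \gamma^* x^\sharp \gamma$; the restriction map is manifestly graded, affine, and nc. Continuity is equally direct: if $x_\lambda \to x$ then $x_\lambda^\sharp(s,\beta) = x_\lambda(s) + \beta 1_n \to x(s) + \beta 1_n = x^\sharp(s,\beta)$, while restriction is the adjoint of $\iota$ and hence weak*-continuous. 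As both maps are continuous and mutually inverse, $x \mapsto x^\sharp$ is an affine homeomorphism of $K$ onto $L$.

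Finally I would assemble the last assertion. Since $S^\sharp$ is a unital operator system with nc state space $L$, Theorem \ref{thm:unital-dual-equivalence} yields an isomorphism $S^\sharp \cong \rmA(L)$, and the affine homeomorphism $K \cong L$ induces an isomorphism $\rmA(K) \cong \rmA(L)$ by functoriality (Theorem \ref{thm:unital-categorical-equivalence}); composing gives $S^\sharp \cong \rmA(K)$. I expect no serious obstacle here: the only genuinely delicate input is the complete positivity, unitality, and uniqueness of $x^\sharp$, and that is precisely the content cited from \cite{Wer2002}*{Lemma 4.9} just before the statement, so the remainder is the routine bookkeeping of the bijection, affineness, and continuity outlined above.
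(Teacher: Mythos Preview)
Your proposal is correct and follows essentially the same approach as the paper: establish that $x \mapsto x^\sharp$ and restriction are mutually inverse continuous affine nc maps, then invoke Theorem~\ref{thm:unital-dual-equivalence}. The only cosmetic difference is that the paper, rather than verifying continuity of $x \mapsto x^\sharp$ directly, observes that the restriction map is continuous and that each $L_n$ is compact, so a continuous bijection between compact Hausdorff spaces is automatically a homeomorphism; your direct verification is equally valid.
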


\begin{proof}
For $x \in K$, we have already observed that $x^\sharp \in L$. On the other hand, for $y \in L$, the restriction $y|_S$ is completely contractive and completely positive, so $y|_S \in K$. Then by uniqueness, $(y|_S)^\sharp = y$. It follows that the map $K \to L : x \to x^\sharp$ is a bijection with inverse given by the restriction map.

It is clear that the restriction map from $L$ to $K$ is continuous and affine. From above, the restriction to each $L_n$ is a continuous bijection onto $K_n$. Since $L_n$ is compact, it follows that this restriction is a homeomorphism. Hence the restriction map is a homeomorphism.

The fact that $S^\sharp$ is isomorphic to $\rmA(K)$ now follows from Theorem~\ref{thm:unital-dual-equivalence}. 
\end{proof}

\begin{theorem} \label{thm:operator-system-isomorphism}
Let $S$ be an operator system with nc quasistate space $(K,z)$. Then $S$ is isomorphic to $\rmA(K,z)$. 
\end{theorem}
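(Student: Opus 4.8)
The plan is to transport the already-established unital duality through the unitization $S^\sharp$. By Proposition \ref{prop:nc-state-space-unitization}, the map $x \mapsto x^\sharp$ is an affine homeomorphism of $K$ onto the nc state space $L$ of $S^\sharp$, so the induced identification $\rmA(L) \cong \rmA(K)$ composed with the unital isomorphism $S^\sharp \cong \rmA(L)$ of Theorem \ref{thm:unital-dual-equivalence} yields a unital complete order isomorphism $\Phi : S^\sharp \to \rmA(K)$. Tracing the formulas, $\Phi$ is given explicitly by $\Phi(s,\alpha)(x) = x(s) + \alpha 1_n$ for $(s,\alpha) \in S^\sharp$ and $x \in K_n$. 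In particular, identifying $S$ with $\{(s,0)\} = \ker z^\sharp \subseteq S^\sharp$, the restriction $\Phi|_S$ is the evaluation map $s \mapsto \hat s$ with $\hat s(x) = x(s)$.

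Next I would check that $\Phi$ carries $S$ exactly onto $\rmA(K,z)$. Since $z$ is the zero quasistate, $\hat s(z) = z(s) = 0$, so $\Phi(S) \subseteq \rmA(K,z)$. Conversely, for any $(s,\alpha) \in S^\sharp$ one computes $\Phi(s,\alpha)(z) = z(s) + \alpha = \alpha$, so $\Phi(s,\alpha) \in \rmA(K,z)$ forces $\alpha = 0$, i.e.\ $(s,\alpha) \in S$. As $\Phi$ is a bijection, $\Phi(S) = \rmA(K,z)$ and $\Phi|_S : S \to \rmA(K,z)$ is a linear bijection.

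Finally I would promote $\Phi|_S$ to an isomorphism in $\OpSys$, i.e.\ verify it is completely isometric and a complete order isomorphism. Giving $\rmA(K,z)$ the norm and matrix order it inherits as a closed self-adjoint subspace of $\rmA(K)$, the fact that $\Phi$ is a unital (hence completely isometric) complete order isomorphism makes $\Phi|_S$ automatically completely isometric and a complete order isomorphism once $S$ is endowed with the norm and order it inherits from $S^\sharp$. It therefore remains to reconcile these inherited structures on $S$ with its intrinsic ones. For the norm this is immediate from the definition of operator system: the canonical inclusion $\iota : S \to S^\sharp$ is completely isometric. The genuine point — and the step I expect to be the main obstacle — is the order, namely that $\iota$ is in addition a complete order embedding, equivalently that the matrix cone of $S$ is detected by its quasistates, so that $x \in \calM_n(S)_h$ is positive whenever $\phi_n(x) \geq 0$ for all $\phi \in \ccp(S,\calM_n)$.

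I would resolve this using the representation theorem quoted before Definition \ref{defn:generalized-operator-system}: realize $S$ through a completely isometric complete order embedding $j : S \to \calB(H)$. Then any contraction $V : H_n \to H$ produces a quasistate $\phi = V^* j(\cdot) V \in \ccp(S,\calM_n)$, since it is completely positive and $\|\phi_k\| \leq \|V\|^2 \leq 1$. Given a vector $\xi = (\xi_1,\dots,\xi_n) \in H^{(n)}$, choosing $V$ with $V e_k = \xi_k$ (rescaled to a contraction) and the test vector $\zeta = \sum_k e_k \otimes e_k$, a direct computation recovers the vector functional $\langle j_n(x)\xi,\xi\rangle = \langle \phi_n(x)\zeta,\zeta\rangle$. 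Hence if $\phi_n(x) \geq 0$ for every such quasistate, then $\langle j_n(x)\xi,\xi\rangle \geq 0$ for all $\xi$, so $j_n(x) \geq 0$ and $x \geq 0$ in $\calM_n(S)$. This identifies the intrinsic order on $S$ with the inherited one and completes the proof that $\Phi|_S : S \to \rmA(K,z)$ is an isomorphism of operator systems.
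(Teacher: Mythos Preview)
Your proof follows the same overall strategy as the paper's: transport the unital duality through $S^\sharp$, check that $\Phi|_S$ lands bijectively on $\rmA(K,z)$, and then verify the order. The divergence is only in that last step. The paper shows directly that the intrinsic matrix cone of $S$ is detected by nc quasistates, via an nc separation argument in $S^{**}$: if $s \notin P_n$, separate $s$ from the weak*-closure of $P_n$ by a self-adjoint normal completely bounded map, observe that this map is automatically completely positive because $P_n$ is a cone, and rescale to a quasistate $x$ with $x(s) \not\geq 0$.

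Your route instead invokes the concrete realization $j : S \to \calB(H)$ stated before Definition \ref{defn:generalized-operator-system} and recovers positivity through compression quasistates and vector functionals. The computation is correct, but there is a circularity to watch: the paper derives that representation from Choi--Effros applied to $S^\sharp$, so the \emph{complete order embedding} part of $j$ already presupposes that $\iota : S \to S^\sharp$ is a complete order embedding --- exactly the fact you are trying to establish. Indeed, Theorem \ref{thm:partial-unitization}(2) only asserts that $\iota$ is completely contractive and completely positive; unwinding Definition \ref{defn:partial-unitization}, the reverse order implication $(x,0) \geq 0$ in $\calM_n(S^\sharp) \Rightarrow x \in P_n$ is literally the statement that $\phi_n(x) \geq 0$ for all quasistates $\phi$ forces $x \in P_n$. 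If you are willing to import this as an external fact from Werner's paper, your argument goes through; within the paper's own development, the separation argument is the self-contained route, and this is why the paper supplies it rather than simply restricting $\Phi^{-1}$.
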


\begin{proof}
By Proposition \ref{prop:nc-state-space-unitization}, we can identify the nc state space of the unitization $S^\sharp$ with $K$. Let $\phi^\sharp : S^\sharp \to A(K)$ denote the isomorphism from Theorem \ref{thm:unital-dual-equivalence}. Then for $s \in S^\sharp$, $\phi^\sharp(s) = \hat{s}$, where $\hat{s} : K \to \calM$ is the affine nc function defined by $\hat{s}(x) = x^\sharp(s)$ for $x \in K$. In particular, for $s \in S$, $\phi^\sharp(s)(z) = \hat{s}(z) = z^\sharp(s) = 0$, so $\phi^\sharp(S) \subseteq \rmA(K,z)$. Hence restricting $\phi^\sharp$ to $S$, we obtain a map $\phi : S \to \rmA(K,z)$. Since $\phi^\sharp$ is an isomorphism, $\phi$ is completely positive and completely isometric. It remains to show that $\phi$ is a surjective complete order isomorphism.

To see that $\phi$ is surjective, choose $a \in \rmA(K,z)$. By the surjectivity of $\phi^\sharp$, there is $s \in S^\sharp$ such that $\phi^\sharp(s) = a$. Then $0 = a(z) = \hat{s}(z) = z^\sharp(s)$. Hence $s \in S$, and we conclude that $\phi$ is surjective.

To see that $\phi$ is a complete order isomorphism, let $P = \sqcup P_n$ and $Q = \sqcup Q_n$ denote the matrix cones of $S$ and $\rmA(K,z)$ respectively. If $\phi$ is not a complete order isomorphism, then there is $s \in \calM_n(S)$ such that $s \notin P_n$ but $\phi(s) \in Q_n$. Suppose that this is the case. We will apply a separation argument to obtain a contradiction.

Identify $S$ with its image under the canonical embedding into its bidual $S^{**}$ and define $M \subseteq \calM(S^{**})$ by $M = \overline{P} = \sqcup \overline{P_m}$, where the closure is taken with respect to the weak* topology. Since $P$ is nc convex, $M$ is nc convex. Hence $M$ is a weak* closed nc convex set. Furthermore, since $P_n$ is convex and uniformly closed, it is weakly closed, implying $s \notin M$. Therefore, by the nc separation theorem \cite{DK2019}*{Theorem 2.4.1} there is a self-adjoint normal completely bounded linear map $\psi : S^{**} \to \calM_n$ such that $\psi(s) \not \geq -1_n \otimes 1_n$ but $\psi(t) \geq -1_n \otimes 1_p$ for all $t \in M_p$. 

Since $\psi$ is normal, it can be identified with the unique normal extension of a map $\psi : S \to \calM_n$ satisfying $\psi(s) \not \geq -1_n \otimes 1_n$ but $\psi(t) \geq -1_n \otimes 1_p$ for all $t \in P_p$. Then in particular, $\psi(s) \not \geq 0$. However, since $P$ is closed under multiplication by positive scalars, for $t \in P_p$ and $\alpha > 0$, $\psi(t) \geq -\alpha^{-1} 1_n \otimes 1_p$. Taking $\alpha \to \infty$ implies $\psi(t) \geq 0$. Hence $\psi \geq 0$. Multiplying $\psi$ by a sufficiently small positive scalar, we obtain a quasistate $x \in K$ such that $x(s) \not \geq 0$. But then $\hat{s}(x) = x(s) \not \geq 0$, so $\phi(s) = \hat{s} \not \geq 0$, contradicting the assumption that $\phi(s) \in Q_n$. 
\end{proof}

\begin{corollary}
	Let $S$ be an operator system with nc quasistate space $(K,z)$. Then $(K,z)$ is a pointed compact nc convex set.
\end{corollary}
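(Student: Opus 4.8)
The plan is to deduce this immediately from Theorem~\ref{thm:operator-system-isomorphism}, which provides an isomorphism $\phi : S \to \rmA(K,z)$ of operator systems. Since $(K,z)$ is by hypothesis the nc quasistate space of $S$, we have $K_n = \ccp(S,\calM_n)$, and by Definition~\ref{defn:pointed-compact-nc-convex-set} it suffices to show that every nc quasistate on $\rmA(K,z)$ is evaluation at a point of $K$. The entire content of the corollary is therefore already packaged in the isomorphism theorem; what remains is a formal verification.

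First I would transport nc quasistates across the isomorphism. Given an nc quasistate $\theta : \rmA(K,z) \to \calM_n$, the composition $\theta \circ \phi : S \to \calM_n$ is completely contractive and completely positive, being a composition of such maps, and hence defines a point $x := \theta \circ \phi \in \ccp(S,\calM_n) = K_n$.

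The remaining step is to verify that $\theta$ coincides with evaluation at $x$. Recall from the proof of Theorem~\ref{thm:operator-system-isomorphism} that $\phi$ is the restriction to $S$ of the unital isomorphism $\phi^\sharp : S^\sharp \to \rmA(K)$, and that for $s \in S$ the function $\phi(s) = \hat{s}$ satisfies $\hat{s}(y) = y(s)$ for $y \in K$. Since $\phi$ is surjective, every $a \in \rmA(K,z)$ has the form $a = \phi(s)$ for some $s \in S$, whence
\[
\theta(a) = \theta(\phi(s)) = (\theta \circ \phi)(s) = x(s) = \hat{s}(x) = a(x).
\]
Thus $\theta$ is evaluation at $x \in K$, so every nc quasistate on $\rmA(K,z)$ belongs to $K$, and $(K,z)$ is a pointed compact nc convex set.

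I expect essentially no obstacle here, since this is a formal consequence of the fact that an isomorphism of operator systems induces a bijection of their nc quasistate spaces. The only point requiring minor care is confirming that the identification of points of $K$ with nc quasistates on $\rmA(K,z)$ via evaluation is the one induced by $\phi$, which is precisely the displayed computation above.
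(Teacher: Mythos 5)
Your proof is correct and takes essentially the same route as the paper: the paper's own proof simply invokes Theorem~\ref{thm:operator-system-isomorphism} to identify $S$ with $\rmA(K,z)$ and then observes that, under this identification, every nc quasistate on $\rmA(K,z)$ belongs to $K$ by definition of the nc quasistate space. Your displayed computation $\theta(a) = \theta(\phi(s)) = x(s) = \hat{s}(x) = a(x)$ is just the explicit verification of that identification, which the paper leaves implicit.
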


\begin{proof}
By Theorem \ref{thm:operator-system-isomorphism}, we can identify $S$ with the operator system $\rmA(K,z)$, and by definition, every nc quasistate on $\rmA(K,z)$ belongs to $K$.
\end{proof}

\begin{corollary} \label{cor:pointed-conditions}
Let $(K,z)$ be a pair consisting of a compact nc convex set $K$ and a point $z \in K_1$. The following are equivalent:
\begin{enumerate}
	\item The pair $(K,z)$ is a pointed compact nc convex set.
	\item The nc quasistate space of the operator system $\rmA(K,z)$ is $(K,z)$.
	\item The operator system $\rmA(K,z)$ satisfies $\rmA(K,z)^\sharp = \rmA(K)$. 
\end{enumerate}
\end{corollary}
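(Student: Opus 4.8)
The plan is to prove the cycle of implications $(1) \Rightarrow (2) \Rightarrow (3) \Rightarrow (1)$, in each case leaning on the unital duality (Theorem \ref{thm:unital-dual-equivalence}) and on Proposition \ref{prop:nc-state-space-unitization}. Throughout, write $S = \rmA(K,z)$, which is an operator system by the remark following the definition of $\rmA(K,z)$, and let $(K',z')$ denote its nc quasistate space, so that $K'_n = \ccp(S,\calM_n)$ and $z'$ is the zero map. The central object is the restriction map $\rho \colon K \to K'$ sending a point $x \in K$, viewed as an nc state on $\rmA(K)$, to $x|_S$. Since unital completely positive maps are completely contractive, $\rho$ is well defined into $K'$, and because elements of $\rmA(K)$ are affine nc functions, $\rho$ is a continuous affine nc map; moreover every $a \in S$ vanishes at $z$, so $\rho(z) = z'$ and $\rho$ is pointed.

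For $(1) \Rightarrow (2)$, I would first record that $\rho$ is injective, using the vector space decomposition $\rmA(K) = \rmA(K,z) \oplus \C 1_{\rmA(K)}$, which is valid because $\rmA(K,z)$ is the kernel of the unital functional $a \mapsto a(z)$: two nc states that agree on $S$ and, being unital, on $1_{\rmA(K)}$ agree on all of $\rmA(K)$, hence coincide as points of $K$. Condition $(1)$ says precisely that every nc quasistate on $S$ is evaluation at a point of $K$, that is, that $\rho$ is surjective. A continuous affine nc bijection of compact nc convex sets is an isomorphism, since its inverse is continuous on each graded level by compactness of $K_n$ and is affine nc by transporting the defining identities of Definition \ref{defn:nc-map} through $\rho$. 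Thus $\rho$ is a pointed affine homeomorphism, which is exactly the assertion $(2)$ that the nc quasistate space of $\rmA(K,z)$ is $(K,z)$.

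For $(2) \Rightarrow (3)$, assuming the nc quasistate space of $S$ is $(K,z)$, I would apply Proposition \ref{prop:nc-state-space-unitization} to $S$ to obtain an isomorphism $S^\sharp \cong \rmA(K)$, and then identify it with the canonical map $j \colon S^\sharp = \rmA(K,z) \oplus \C \to \rmA(K)$ given by $j(a,\lambda) = a + \lambda 1_{\rmA(K)}$. Tracing the isomorphism of Theorem \ref{thm:unital-dual-equivalence} through the identification $x \leftrightarrow x^\sharp$ of $K$ with the nc state space of $S^\sharp$, the image of $(a,\lambda)$ is the function $x \mapsto x^\sharp(a,\lambda) = a(x) + \lambda 1_n$, which is precisely $j(a,\lambda)$. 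Since the duality isomorphism is a unital complete order isomorphism, so is $j$, giving $\rmA(K,z)^\sharp = \rmA(K)$.

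For $(3) \Rightarrow (1)$, suppose $\rmA(K,z)^\sharp = \rmA(K)$, and let $\theta \colon S \to \calM_n$ be any nc quasistate. Theorem \ref{thm:partial-unitization} produces the unital completely positive extension $\theta^\sharp \colon S^\sharp \to \calM_n$, which by $(3)$ is an nc state on $\rmA(K)$, hence by Theorem \ref{thm:unital-dual-equivalence} is evaluation at some $x \in K_n$; restricting to $S \subseteq \rmA(K)$ gives $\theta = x|_S$, so $\theta$ belongs to $K$ and $(K,z)$ is pointed. I expect the main obstacle to be the two ``upgrade'' steps: verifying that $\rho$ is genuinely an isomorphism of nc convex sets, rather than merely a set-theoretic bijection, and pinning down the abstract isomorphism of Proposition \ref{prop:nc-state-space-unitization} as the concrete canonical map $j$, so that $(3)$ holds as a literal equality. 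Both become routine once the correct identifications are in place, but they are where the argument must be handled with care.
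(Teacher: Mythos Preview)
Your proposal is correct and follows essentially the same cycle $(1)\Rightarrow(2)\Rightarrow(3)\Rightarrow(1)$ as the paper, invoking Proposition~\ref{prop:nc-state-space-unitization} for $(2)\Rightarrow(3)$ and the unitization $\theta^\sharp$ for $(3)\Rightarrow(1)$. The only difference is that you supply more detail at the identification steps (injectivity of $\rho$, the explicit form of $j$), whereas the paper treats these as tacit; your added care is justified and does not depart from the paper's method.
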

\begin{proof}
(1) $\Rightarrow$ (2) If $(K,z)$ is a pointed compact nc convex set then by definition every nc quasistate on $\rmA(K,z)$ belongs to $K$. Since every point in $K$ is an nc quasistate on $\rmA(K,z)$, it follows that the nc quasistate space of $\rmA(K,z)$ is $(K,z)$.

(2) $\Rightarrow$ (3) If the nc quasistate space of $\rmA(K,z)$ is $(K,z)$, then Proposition \ref{prop:nc-state-space-unitization} implies that the nc state space of $\rmA(K,z)^\sharp$ is $K$. It follows from Theorem \ref{thm:unital-categorical-equivalence} that $\rmA(K,z)^\sharp = \rmA(K)$. 

(3) $\Rightarrow$ (1) If $\rmA(K,z)^\sharp = \rmA(K)$, then since every nc quasistate on $\rmA(K,z)$ extends to an nc state on $\rmA(K,z)^\sharp$, and since $K$ is nc state space of $\rmA(K)$, it follows that every nc quasistate of $\rmA(K,z)$ belongs to $K$. Hence $(K,z)$ is a pointed compact nc convex set.
\end{proof}

The next result follows immediately from Theorem \ref{thm:operator-system-isomorphism} and Corollary \ref{cor:pointed-conditions}. It is an analogue of the representation theorem \cite{DK2019}*{Theorem 3.2.3}.

% todo

\begin{theorem} \label{thm:duality-theorem}
Let $S$ be an operator system with nc quasistate space $(K,z)$. The map $S^\sharp \to \rmA(K) : s \to \hat{s}$ defined by
\[
\hat{s}(x) = x^\sharp(s) \quad \text{for} \quad x \in K,
\]
is a unital complete order isomorphism that restricts to a completely isometric complete order isomorphism from $S$ to $\rmA(K,z)$. Hence $S$ is isomorphic to $\rmA(K,z)$. 
\end{theorem}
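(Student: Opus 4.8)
The plan is to recognize this statement as a repackaging of Proposition \ref{prop:nc-state-space-unitization} together with Theorem \ref{thm:operator-system-isomorphism}, so that the work is almost entirely bookkeeping about which identifications are in play. First I would invoke Proposition \ref{prop:nc-state-space-unitization}, which identifies the nc state space $L$ of the unital operator system $S^\sharp$ with $K$ via the affine homeomorphism $x \mapsto x^\sharp$, where $x^\sharp(s,\alpha) = x(s) + \alpha 1_n$ for $x \in K_n$. Under this identification, the nc state $x^\sharp \in L$ evaluates $s \in S^\sharp$ as $x^\sharp(s)$.

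Next I would apply the unital representation theorem, Theorem \ref{thm:unital-dual-equivalence}, to $S^\sharp$. Since $S^\sharp$ is a unital operator system whose nc state space is (identified with) $K$, the evaluation map $S^\sharp \to \rmA(K)$, $s \mapsto \hat{s}$ with $\hat{s}(y) = y(s)$, is a unital complete order isomorphism. Transporting this formula through the homeomorphism $x \mapsto x^\sharp$ gives exactly $\hat{s}(x) = x^\sharp(s)$, which establishes the first assertion. For the restriction statement I would use the observation recorded just before Proposition \ref{prop:nc-state-space-unitization} that $S = \ker z^\sharp$: for $s \in S$ we have $\hat{s}(z) = z^\sharp(s) = 0$, so $\hat{s} \in \rmA(K,z)$, and thus the isomorphism carries $S$ into $\rmA(K,z)$. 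Conversely, given $a \in \rmA(K,z)$ there is $s \in S^\sharp$ with $\hat{s} = a$, and then $z^\sharp(s) = a(z) = 0$ forces $s \in S$, giving surjectivity onto $\rmA(K,z)$.

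The genuine mathematical difficulty in this circle of ideas has already been dispatched, namely the separation argument in the proof of Theorem \ref{thm:operator-system-isomorphism} showing that the restriction to $S$ is a complete order isomorphism (and not merely completely isometric); that argument rests on applying the nc separation theorem to the weak*-closure of the matrix cone of $S$ inside $S^{**}$. Consequently, the only thing genuinely left to verify here is the compatibility of the two evaluation formulas, i.e.\ that the isomorphism of Theorem \ref{thm:operator-system-isomorphism} is literally the restriction to $S$ of the map appearing in the present statement. I expect this compatibility check to be the main (though routine) obstacle, as it requires carefully tracing the identification of the nc state space of $S^\sharp$ with $K$ through both Proposition \ref{prop:nc-state-space-unitization} and Theorem \ref{thm:unital-dual-equivalence}; once the identifications are matched, the conclusion is immediate.
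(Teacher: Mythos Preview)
Your proposal is correct and matches the paper's own treatment: the paper simply states that the result follows immediately from Theorem \ref{thm:operator-system-isomorphism} and Corollary \ref{cor:pointed-conditions}, and what you have written is precisely an unwinding of that citation (the surjectivity onto $\rmA(K,z)$ and the identification $\hat{s}(x)=x^\sharp(s)$ already appear verbatim in the proof of Theorem \ref{thm:operator-system-isomorphism}). The only cosmetic difference is that you cite Proposition \ref{prop:nc-state-space-unitization} and Theorem \ref{thm:unital-dual-equivalence} directly rather than routing through Corollary \ref{cor:pointed-conditions}, but these are the same ingredients.
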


Theorem \ref{thm:operator-system-isomorphism} and Corollary \ref{cor:pointed-conditions} imply the dual equivalence of the category $\OpSys$ of operator systems and the category $\PoNCConv$ of pointed compact nc convex sets. The contravariant functor $\OpSys \to \PoNCConv$ is defined in the following way:

\begin{enumerate}
	\item An operator system $S$ is mapped to its nc quasistate space.
	\item For operator systems $S$ and $T$ with nc quasistate spaces $(K,z)$ and $(L,w)$ respectively, a morphism $\phi : S \to T$ is mapped to the morphism $\phi^d : L \to K$ defined by
\[
\phi^d(y)(a) = \phi(a)(y), \quad \text{for} \quad y \in L \text{ and } a \in \rmA(K,z).
\]
\end{enumerate}
The inverse functor $\PoNCConv \to \OpSys$ is defined in the following way:
\begin{enumerate}
	\item A pointed compact nc convex set $(K,z)$ is mapped to the operator system $\rmA(K,z)$.
	\item If $(K,z)$ and $(L,w)$ are compact nc convex sets and $\psi : L \to K$ is a morphism, then the corresponding morphism $\psi^d : \rmA(K,z) \to \rmA(L,w)$ is defined by
\[
\psi^d(a)(y) = a(\psi(y)), \quad \text{for} \quad a \in \rmA(K,z) \text{ and } y \in L.
\]
\end{enumerate}
The next result summarizes this discussion.

\begin{theorem} \label{thm:categorical-equivalence}
The contravariant functors $\OpSys \to \PoNCConv$ and $\PoNCConv \to \OpSys$ defined above are inverses. Hence the categories $\OpSys$ and $\PoNCConv$ are dually equivalent.
\end{theorem}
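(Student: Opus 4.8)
The plan is to verify that the two contravariant functors are mutually inverse by checking three things in turn: that each is well defined on morphisms, that each is functorial, and that the two round-trip composites are naturally isomorphic to the respective identity functors. The object-level content is already in hand. The corollary following Theorem \ref{thm:operator-system-isomorphism} guarantees that the nc quasistate space of an operator system really is a pointed compact nc convex set, so the functor $\OpSys \to \PoNCConv$ lands in the correct category, while Definition \ref{defn:pointed-compact-nc-convex-set} together with Corollary \ref{cor:pointed-conditions} guarantees that $\rmA(K,z)$ is an operator system whose nc quasistate space is again $(K,z)$, so the functor $\PoNCConv \to \OpSys$ lands in the correct category as well. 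I would bootstrap the remaining structure from the unital duality of Theorem \ref{thm:unital-categorical-equivalence} through the unitization, as set up in Proposition \ref{prop:nc-state-space-unitization}.

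First I would unwind the two morphism assignments into transparent composition formulas. Identifying $S \cong \rmA(K,z)$ and $T \cong \rmA(L,w)$, for a morphism $\phi : S \to T$ the definition reads $\phi^d(y) = y \circ \phi$, the pullback of nc quasistates; since $y$ is completely contractive and completely positive and $\phi$ is a morphism, $\phi^d(y)$ is again an nc quasistate on $S$, hence a point of $K$, and $y \mapsto y \circ \phi$ is visibly graded, continuous, and affine. Pointedness $\phi^d(w) = z$ holds because $w$ is the zero quasistate on $T$, so $w \circ \phi = 0$. Dually, for a pointed continuous affine nc map $\psi : L \to K$ the definition reads $\psi^d(a) = a \circ \psi$; this is completely contractive and completely positive since $\psi$ is affine nc, and it vanishes at $w$ because $\psi$ is pointed and $a$ vanishes at $z$, so it lands in $\rmA(L,w)$. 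Equivalently, one identifies $\phi^d$ with the restriction to $K$ of the unital dual $(\phi^\sharp)^d$ furnished by Theorem \ref{thm:partial-unitization}(3) and Theorem \ref{thm:unital-categorical-equivalence}, using $\phi^\sharp|_S = \phi$, which immediately yields that $\phi^d$ is a continuous affine nc map. Functoriality, i.e.\ compatibility with composition and identities, is then immediate from the formulas $\phi^d(y) = y\circ\phi$ and $\psi^d(a)=a\circ\psi$.

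Next I would assemble the two natural isomorphisms. For the composite $\PoNCConv \to \OpSys \to \PoNCConv$, Corollary \ref{cor:pointed-conditions}(2) shows that $(K,z)$ is sent to $(K,z)$ on the nose, so the identity serves as the comparison map. For the composite $\OpSys \to \PoNCConv \to \OpSys$, Theorem \ref{thm:duality-theorem} provides the explicit isomorphism $\eta_S : S \to \rmA(K,z)$ given by $s \mapsto \widehat{s}$, where $\widehat{s}(x) = x^\sharp(s) = x(s)$ for $x \in K$ and $s \in S$.

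The step I expect to be the main obstacle is naturality of $\eta$, though it ultimately reduces to a diagram chase. For $\phi : S \to T$ I must check that $\eta_T \circ \phi = (\phi^d)^d \circ \eta_S$ as maps $S \to \rmA(L,w)$, where $(\phi^d)^d$ is the image of $\phi$ under both functors. Evaluating the left-hand side at $s \in S$ and then at a point $y \in L$ gives $\widehat{\phi(s)}(y) = y(\phi(s))$, while the right-hand side gives $\bigl(\widehat{s}\circ\phi^d\bigr)(y) = \widehat{s}(y\circ\phi) = (y\circ\phi)(s) = y(\phi(s))$, so the two agree. This is the only genuinely new computation; everything else is either definitional or a direct appeal to the unital duality of Theorem \ref{thm:unital-categorical-equivalence} transported across the unitization of Proposition \ref{prop:nc-state-space-unitization}, and it mirrors the unital argument of \cite{DK2019}. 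Having established functoriality of both assignments together with these two natural isomorphisms, the functors are mutually inverse, and the dual equivalence of $\OpSys$ and $\PoNCConv$ follows.
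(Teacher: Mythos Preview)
Your proposal is correct and takes essentially the same approach as the paper, which treats the theorem as a summary of the preceding discussion and gives no explicit proof; you have simply spelled out the functoriality and naturality verifications that the paper leaves implicit after establishing Theorem \ref{thm:operator-system-isomorphism}, its corollary, Corollary \ref{cor:pointed-conditions}, and Theorem \ref{thm:duality-theorem}. One minor remark: for the composite $\PoNCConv \to \OpSys \to \PoNCConv$ you should also record the one-line naturality check $(\psi^d)^d = \psi$, which follows from the same evaluation unwinding you did in the other direction.
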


The next result characterizing isomorphic operator systems is an analogue of \cite{DK2019}*{Corollary 3.2.6}. It follows immediately from Theorem \ref{thm:categorical-equivalence}.

\begin{corollary} \label{cor:isomorphic-iff-affinely-homeomorphic}
Let $(K,z)$ and $(L,w)$ be compact pointed nc convex sets. Then $\rmA(K,z)$ and $\rmA(L,w)$ are isomorphic if and only if $(K,z)$ and $(L,w)$ are pointedly affinely homeomorphic. Hence two operator systems are isomorphic if and only if their nc quasistate spaces are pointedly affinely homeomorphic.
\end{corollary}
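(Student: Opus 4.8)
The plan is to deduce this entirely from the dual equivalence recorded in Theorem \ref{thm:categorical-equivalence}, using the standard fact that an equivalence of categories both preserves and reflects isomorphisms. The only additional observations required are (i) that the invertible morphisms in $\OpSys$ are exactly the isomorphisms of operator systems (completely isometric complete order isomorphisms), and (ii) that the invertible morphisms in $\PoNCConv$ are exactly the pointed affine nc homeomorphisms; both hold by the definitions of these two categories.

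First I would establish the first assertion. Since the contravariant functor $F : \PoNCConv \to \OpSys$ sending $(K,z) \mapsto \rmA(K,z)$ is (one half of) an equivalence of categories by Theorem \ref{thm:categorical-equivalence}, it reflects and preserves isomorphisms. Hence $\rmA(K,z) = F(K,z)$ and $\rmA(L,w) = F(L,w)$ are isomorphic in $\OpSys$ if and only if $(K,z)$ and $(L,w)$ are isomorphic in $\PoNCConv$, which by the definition of $\PoNCConv$ means precisely that $(K,z)$ and $(L,w)$ are pointedly affinely homeomorphic.

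If one prefers to avoid the abstract statement, the same conclusion follows directly from functoriality. Given an isomorphism $\phi : \rmA(K,z) \to \rmA(L,w)$, apply the functor $\OpSys \to \PoNCConv$ to both $\phi$ and $\phi^{-1}$ to obtain morphisms $\phi^d : L \to K$ and $(\phi^{-1})^d : K \to L$ in $\PoNCConv$; contravariant functoriality gives $(\phi^{-1})^d \circ \phi^d = (\phi \circ \phi^{-1})^d = \id_L$ and $\phi^d \circ (\phi^{-1})^d = \id_K$, so $\phi^d$ is a pointed affine homeomorphism. The converse is identical with the two functors exchanged, using that the functor $\PoNCConv \to \OpSys$ carries a pointed affine homeomorphism and its inverse to mutually inverse isomorphisms of operator systems.

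Finally, for the last sentence I would reduce to the first assertion. Given operator systems $S$ and $T$ with nc quasistate spaces $(K,z)$ and $(L,w)$, the preceding corollary shows these pairs are pointed compact nc convex sets, and Theorem \ref{thm:operator-system-isomorphism} (equivalently Theorem \ref{thm:duality-theorem}) identifies $S \cong \rmA(K,z)$ and $T \cong \rmA(L,w)$. Hence $S \cong T$ if and only if $\rmA(K,z) \cong \rmA(L,w)$, which by the first part holds if and only if $(K,z)$ and $(L,w)$ are pointedly affinely homeomorphic. I do not expect any genuine obstacle; the only point needing a moment's care is that the inverse of a pointed affine nc homeomorphism is again pointed and continuous affine nc, but this is immediate, since such an inverse sends $w$ back to $z$ and the homeomorphism condition already supplies the continuous affine nc inverse.
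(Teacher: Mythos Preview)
Your proposal is correct and takes essentially the same approach as the paper, which simply notes that the corollary follows immediately from Theorem \ref{thm:categorical-equivalence}. You have spelled out the standard categorical argument (equivalences preserve and reflect isomorphisms) that the paper leaves implicit, and your added verification for the last sentence via Theorem \ref{thm:operator-system-isomorphism}/\ref{thm:duality-theorem} is exactly what is intended.
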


We saw in Example \ref{ex:does-not-extend-to-complete-order-embedding} that if $S$ and $T$ are operator systems and $\phi : S \to T$ is a completely contractive complete order embedding, then it is not necessarily true that the unitization $\phi^\sharp : S^\sharp \to T^\sharp$ is completely isometric. In other words, $\phi$ is not necessarily an embedding. However, we can now state necessary and sufficient conditions for $\phi$ to be an embedding.

The following result follows immediately from Theorem \ref{thm:unital-categorical-equivalence}, Theorem \ref{thm:categorical-equivalence} and the discussion preceding the statements of these results.

\begin{lemma} \label{lem:double-dual-map}
Let $(K,z)$ and $(L,w)$ be pointed compact nc convex sets and let $\phi : \rmA(K,z) \to \rmA(L,w)$ be a completely contractive and completely positive map. Let $\phi^d : L \to K$ denote the corresponding continuous affine map defined as in Theorem \ref{thm:categorical-equivalence}. Then $\phi^d$ coincides with the continuous affine map obtained by applying Theorem \ref{thm:unital-categorical-equivalence} to the unitization $\phi^\sharp : \rmA(K) \to \rmA(L)$.
\end{lemma}

\begin{corollary}
Let $(K,z)$ and $(L,w)$ be pointed compact nc convex sets. Let $\phi : \rmA(K,z) \to \rmA(L,w)$ be a completely contractive and completely positive map and let $\phi^d : L \to K$ denote the pointed continuous affine map given by applying Theorem \ref{thm:categorical-equivalence} to $\phi$. Then $\phi$ is an embedding if and only if $\phi^d$ is surjective.
\end{corollary}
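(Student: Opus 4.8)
The plan is to reduce the statement to Proposition \ref{prop:unital-complete-order-embedding} by passing to unitizations. By definition, the completely contractive and completely positive map $\phi : \rmA(K,z) \to \rmA(L,w)$ is an embedding precisely when its unitization $\phi^\sharp$ is an embedding in the category $\UnOpSys$, i.e.\ a unital complete order embedding. Since $(K,z)$ and $(L,w)$ are pointed, Corollary \ref{cor:pointed-conditions} identifies $\rmA(K,z)^\sharp$ with $\rmA(K)$ and $\rmA(L,w)^\sharp$ with $\rmA(L)$, so $\phi^\sharp$ is a map $\rmA(K) \to \rmA(L)$, and by Theorem \ref{thm:partial-unitization}(3) it is unital and completely positive.

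The first key step is to invoke the fact (recorded in the remark on unital operator systems) that a unital completely positive map between unital operator systems is a complete order embedding if and only if it is completely isometric, the point being that the norm on a unital operator system is determined by its matrix order. Thus $\phi^\sharp$ is a unital embedding if and only if it is completely isometric. Next I would apply Proposition \ref{prop:unital-complete-order-embedding} to the unital completely positive map $\phi^\sharp : \rmA(K) \to \rmA(L)$: this gives that $\phi^\sharp$ is completely isometric if and only if its dual $(\phi^\sharp)^d : L \to K$ is surjective. Finally, Lemma \ref{lem:double-dual-map} identifies $(\phi^\sharp)^d$ with $\phi^d$. Chaining these equivalences yields that $\phi$ is an embedding if and only if $\phi^d$ is surjective.

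The one point requiring care, and the step I expect to be the main obstacle, is the direction producing a genuine embedding of operator systems from the surjectivity of $\phi^d$: the definition of an embedding requires $\phi$ itself to be a completely isometric complete order embedding, whereas the argument above directly controls only $\phi^\sharp$. To close this gap I would use that for a pointed set the canonical inclusions $\rmA(K,z) \hookrightarrow \rmA(K)$ and $\rmA(L,w) \hookrightarrow \rmA(L)$ are completely isometric complete order embeddings: the inclusion is completely isometric because $\rmA(K,z)$ is an operator system with unitization $\rmA(K)$, and it is a complete order embedding because the matrix cone of $\rmA(K,z)$ is exactly the one inherited from $\rmA(K)$ (equivalently, the nc quasistates on $\rmA(K,z)$ are evaluations at points of $K$). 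Consequently, restricting the unital complete order embedding $\phi^\sharp$ to the subsystem $\rmA(K,z)$, whose image lies in $\rmA(L,w) \subseteq \rmA(L)$, shows that $\phi$ is itself completely isometric and a complete order embedding, so $\phi$ is an embedding in the required sense.
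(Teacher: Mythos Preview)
Your proof is correct and follows essentially the same approach as the paper: identify $\phi^d$ with $(\phi^\sharp)^d$ via Lemma \ref{lem:double-dual-map}, then apply Proposition \ref{prop:unital-complete-order-embedding} to $\phi^\sharp$. The paper's proof is terse and leaves implicit the point you flag and carefully resolve---that surjectivity of $\phi^d$ a priori only controls $\phi^\sharp$, and one must then deduce that $\phi$ itself is a completely isometric complete order embedding by restricting along the (completely isometric, order-preserving) inclusions $\rmA(K,z)\hookrightarrow\rmA(K)$ and $\rmA(L,w)\hookrightarrow\rmA(L)$---so your version is, if anything, more complete.
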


\begin{proof}
	By Lemma \ref{lem:double-dual-map}, the map $\phi^d$ coincides with the map obtained by applying Theorem \ref{thm:unital-categorical-equivalence} to the unitization $\phi^\sharp : \rmA(K) \to \rmA(L)$. By Proposition \ref{prop:unital-complete-order-embedding}, $\phi^\sharp$ is completely isometric if and only if $\phi^d$ is surjective.
\end{proof}

%%%
\section{Pointed noncommutative functions} \label{sec:pointed-nc-functions}
%%%

%%%
\subsection{Noncommutative functions}
%%%

In order to define a more general notion of continuous nc function, it is necessary to introduce the {\em point-strong topology} on a compact nc convex set $K$. This is the weakest topology on each $K_n$ making the maps $K_n \to \C : x \to \xi^* a(x) \eta$ and $K_n \to \C : x \to a(x)^* \xi$ continuous for all $a \in \rmA(K)$ and all vectors $\xi,\eta \in H_n$.

The following definition is essentially \cite{DK2019}*{Definition 4.2.1}.

\begin{definition}
Let $K$ be a compact nc convex set. An {\em nc function} on $K$ is an nc map $f : K \to \calM$ in the sense of Definition \ref{defn:nc-map}. An nc function $f$ is {\em continuous} if it is continuous with respect to the point-strong* topology on $K$ from above. We will write $\rmB(K)$ and $\rmC(K)$ for the unital C*-algebras of bounded and continuous nc functions on $K$ respectively.
\end{definition}

\begin{remark}
It is clear that $\rmA(K) \subseteq \rmC(K) \subseteq \rmB(K)$. The product on $\rmB(K)$ is the pointwise product, meaning that for $f,g \in \rmB(K)$ and $x \in K$, $(fg)(x) = f(x)g(x)$. The adjoint is defined by $f^*(x) = f(x)^*$ for $f \in \rmB(K)$ and $x \in K$. By \cite{DK2019}*{Theorem 4.4.3}, $\rmC(K) = \ca(\rmA(K))$. We will say more about the C*-algebra $\rmC(K)$ in Section \ref{sec:min-max-c-star-covers}. 
\end{remark}

For $x \in K_n$, we will write $\delta_x : \rmB(K) \to \calM_n$ for the point evaluation *-homomorphism defined by $\delta_x(f) = f(x)$ for $f \in \rmB(K)$. This is a noncommutative analogue of an evaluation functional, since for $f \in \rmC(K)$, $\delta_x(f) = f(x)$. 

Elements in the enveloping von Neumann algebra $\rmC(K)^{**}$ can naturally be identified with bounded nc functions on $K$. Specifically, for $x \in  K_n$, it follows from the universal property of $\rmC(K)^{**}$ as the enveloping von Neumann algebra of $\rmC(K)$ that the *-homomorphism $\delta_x : \rmC(K) \to \calM_n$ has a unique extension to a normal *-homomorphism $\delta_x^{**} : \rmC(K)^{**} \to \calM_n$. For $f \in \rmC(K)^{**}$, the function $\tilde{f} : K \to \calM$ defined by $\tilde{f}(x) = \delta_x^{**}(f)$ for $x \in K$ is a bounded nc function and hence belongs to $\rmB(K)$. In fact, much more can be said. 

The following result is contained in \cite{DK2019}*{Theorem 4.4.3} and \cite{DK2019}*{Corollary 4.4.4}. 

\begin{theorem} \label{thm:unital-enveloping-von-neumann}
Let $K$ be a compact nc convex set. The map $\sigma : \rmC(K)^{**} \to \rmB(K)$ defined as above is a normal *-isomorphism that restricts to a normal unital complete order isomorphism from $\rmA(K)^{**}$ onto the unital operator system $\rmA_b(K)$ of bounded affine nc functions.
\end{theorem}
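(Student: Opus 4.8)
\emph{Overview.} The plan is to establish, in order, that $\sigma$ is a unital $*$-homomorphism, that it is injective, that it is surjective onto $\rmB(K)$, and finally to identify the restriction to $\rmA(K)^{**}$. That $\sigma$ is a unital $*$-homomorphism is essentially formal: the operations on $\rmB(K)$ are pointwise, so for $f,g \in \rmC(K)^{**}$ and $x \in K_n$ one has $\sigma(fg)(x) = \delta_x^{**}(fg) = \delta_x^{**}(f)\,\delta_x^{**}(g) = (\sigma(f)\sigma(g))(x)$, and similarly for adjoints, using that each $\delta_x^{**}$ is a normal $*$-homomorphism; gradedness, compatibility with direct sums, and unitary equivariance of $\sigma(f)$ follow from the corresponding compatibilities of the family $\{\delta_x^{**}\}$, so that $\sigma(f)$ is genuinely a bounded nc function. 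Normality is then automatic once $\sigma$ is shown to be a bijection, since $\rmB(K)$ thereby inherits the von Neumann algebra structure of $\rmC(K)^{**}$.

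\emph{A universal point and injectivity.} The heart of the argument is a single ``universal'' point. Since $\rmC(K) = \ca(\rmA(K))$, every unital $*$-representation $\pi$ of $\rmC(K)$ restricts on $\rmA(K)$ to a unital completely positive map, hence to an nc state, and by Theorem \ref{thm:unital-dual-equivalence} this is evaluation at a point $x \in K$; because $\rmC(K)$ is generated by $\rmA(K)$ and $\pi$ agrees with $\delta_x$ there, in fact $\pi = \delta_x$. Consequently, taking the cardinal bound $\kappa$ of $K$ large enough (this is where the size of $\kappa$ enters), the point $u = \bigoplus_{x \in K} x \in K_\kappa$ has the property that $\delta_u = \bigoplus_x \delta_x$ contains a copy of the GNS representation of every state of $\rmC(K)$. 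Hence $\delta_u$ is quasi-equivalent to the universal representation and $\delta_u^{**} : \rmC(K)^{**} \to \delta_u(\rmC(K))''$ is a normal $*$-isomorphism. In particular, if $\sigma(f) = 0$ then $\delta_u^{**}(f) = 0$ and so $f = 0$, which gives injectivity.

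\emph{Surjectivity.} Fix $g \in \rmB(K)$; I claim $g(u) \in \delta_u(\rmC(K))''$. Let $v$ be a unitary in the commutant $\delta_u(\rmC(K))'$. Then $v^* a(u) v = a(u)$ for every $a \in \rmA(K)$, and since $a$ is affine we get $a(v^* u v) = v^* a(u) v = a(u)$; as points of $K$ are determined by their action on $\rmA(K)$ (Theorem \ref{thm:unital-dual-equivalence}), this forces $v^* u v = u$. Unitary equivariance of the nc function $g$ then yields $v^* g(u) v = g(v^* u v) = g(u)$, so $g(u)$ commutes with $v$; as $\delta_u(\rmC(K))'$ is spanned by its unitaries, $g(u)$ commutes with the whole commutant, whence $g(u) \in \delta_u(\rmC(K))'' = \delta_u^{**}(\rmC(K)^{**})$. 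Choosing $f$ with $\delta_u^{**}(f) = g(u)$ gives $\sigma(f)(u) = g(u)$. Finally, every $x \in K$ is a direct summand of $u$ by construction, and both $\sigma(f)$ and $g$ respect the compression onto that summand (being nc functions, hence compatible with direct sums); therefore $\sigma(f)(x) = g(x)$ for all $x$, i.e.\ $\sigma(f) = g$.

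\emph{The affine restriction and the main obstacle.} Being a $*$-isomorphism, $\sigma$ is automatically a unital complete order isomorphism, and it fixes $\rmA(K)$ pointwise since $\sigma(a)(x) = \delta_x(a) = a(x)$ for $a \in \rmA(K)$. Identifying $\rmA(K)^{**}$ with the weak$^*$-closure of $\rmA(K)$ in $\rmC(K)^{**}$ and using normality, $\sigma(\rmA(K)^{**})$ is the weak$^*$-closure of $\rmA(K)$ in $\rmB(K)$; since a weak$^*$-limit of affine nc functions is again affine, this lands in $\rmA_b(K)$, and the reverse inclusion follows by running the universal-point argument above for $b \in \rmA_b(K)$ and checking that affineness of $b$ confines the resulting $f$ to the weak$^*$-closed operator system $\rmA(K)^{**}$. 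I expect the main obstacle to be the construction and verification of $u$: one must arrange $\kappa$ large enough that $u = \bigoplus_x x$ is an admissible point and $\delta_u$ is quasi-equivalent to the universal representation, and one must confirm that ``$x$ is a direct summand of $u$'' together with the nc-function axioms genuinely forces agreement of $\sigma(f)$ and $g$ at \emph{every} point. The weak$^*$-density of $\rmA(K)$ in $\rmA_b(K)$ is the second delicate step, being the one place where the gap between general nc functions and affine nc functions must be treated carefully.
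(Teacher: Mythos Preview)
The paper does not prove this result; it is quoted from \cite{DK2019}*{Theorem 4.4.3} and \cite{DK2019}*{Corollary 4.4.4}, so there is no in-paper argument to compare against. Your sketch is, however, essentially the strategy used in \cite{DK2019}: realize the universal representation of $\rmC(K)$ as $\delta_u$ for a single ``large'' point $u \in K$, then obtain surjectivity via a double-commutant computation exploiting unitary equivariance of nc functions.

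One technical correction: $u = \bigoplus_{x \in K} x$ is a direct sum over a proper class and is not a point of $K$. What you need is a direct sum over a \emph{set}---for instance one summand for each state of $\rmC(K)$---so that $\delta_u$ dominates every GNS representation and $\delta_u^{**}$ is a normal $*$-isomorphism onto $\delta_u(\rmC(K))''$. The passage from $\sigma(f)(u) = g(u)$ to $\sigma(f) = g$ then uses that every $x \in K$ is unitarily equivalent to a direct summand of a suitable amplification of $u$ (since $\delta_x$ is quasi-contained in the universal representation), and nc functions respect direct sums and unitaries.

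For the affine restriction, your forward inclusion is fine, but the reverse inclusion is only a gesture: ``running the universal-point argument'' produces $f \in \rmC(K)^{**}$ with $\sigma(f) = b$, and you still owe an explanation of why affineness of $b$ forces $f \in \rmA(K)^{**}$. One clean route: an affine nc function $b$ is determined by its restriction $b|_{K_1}$ (compress by rank-one isometries), and $b|_{K_1}$ is a bounded affine function on the state space $K_1$ of $\rmA(K)$; since states span $\rmA(K)^*$, this extends uniquely to an element of $(\rmA(K)^*)^* = \rmA(K)^{**}$. Checking that $\sigma$ sends this element to $b$ again reduces, via rank-one compressions and the identity $\delta_x^{**}|_{\rmA(K)^{**}} = x^{**}$, to the scalar level where it holds by construction.
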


%%%
\subsection{Pointed noncommutative functions}
%%%

\begin{definition}
Let $(K,z)$ be a pointed compact nc convex set. We will say that an nc function $f : K \to \calM$ is {\em pointed} if $f(z) = 0$. We let $\rmB(K,z)$ denote the space of pointed bounded nc functions on $K$. Similarly, we let $\rmC(K,z) = \rmC(K) \cap \rmB(K,z)$ denote the space of pointed continuous nc functions on $K$.
\end{definition}

\begin{remark} \label{rem:pointed-functions-ideals}
It is clear that $\rmB(K,z)$ is a closed two-sided ideal of $\rmB(K)$ and that $\rmC(K,z)$ is a closed two-sided ideal of $\rmC(K)$. In particular, $\rmB(K,z)$ and $\rmC(K,z)$ are C*-algebras. Furthermore, it follows from the identification $\rmC(K)^{**} = \rmB(K)$ that the representation $\delta_z$ is normal on $\rmB(K)$. Hence $\rmB(K,z)$ is a weak*-closed ideal of $\rmB(K)$.
\end{remark}

\begin{proposition} \label{prop:unitizations}
Let $(K,z)$ be a pointed compact nc convex set. Then $\rmC(K,z)^\sharp = \rmC(K)$ and $\rmC(K,z) = \ca(\rmA(K,z))$.
\end{proposition}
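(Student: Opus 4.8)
The plan is to exploit the fact that the point evaluation at $z \in K_1$ is a character on $\rmC(K)$. Write $\chi := \delta_z : \rmC(K) \to \calM_1 = \C$ for the point-evaluation $*$-homomorphism; this is a unital character with $\ker \chi = \rmC(K,z)$ by the definition of $\rmC(K,z)$. Since $\rmC(K)$ is unital and $\chi(1) = 1$, the inclusion $\C \to \rmC(K)$, $\lambda \mapsto \lambda 1$, splits $\chi$, so that every $f \in \rmC(K)$ decomposes as $f = (f - \chi(f)1) + \chi(f)1$ with $f - \chi(f)1 \in \rmC(K,z)$. This gives $\rmC(K) = \rmC(K,z) + \C 1$ with $\rmC(K,z) \cap \C 1 = 0$ (as $1(z) = 1 \neq 0$). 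For the first assertion I would then check that the canonical unitization map $\rmC(K,z)^\sharp \to \rmC(K)$, $(f,\lambda) \mapsto f + \lambda 1$, is a unital $*$-homomorphism (a one-line multiplicativity check), and that it is bijective: surjectivity is the displayed decomposition, and injectivity follows by applying $\chi$. Since $\rmC(K,z)$ is a C*-algebra (Remark \ref{rem:pointed-functions-ideals}), the partial unitization coincides with the C*-algebraic unitization (Remark following Theorem \ref{thm:partial-unitization}), and we conclude $\rmC(K,z)^\sharp = \rmC(K)$.

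For the second assertion, set $C := \ca(\rmA(K,z))$, the C*-subalgebra of $\rmC(K)$ generated by $\rmA(K,z)$. The inclusion $C \subseteq \rmC(K,z)$ is immediate, since $\rmA(K,z) \subseteq \rmC(K,z)$ and $\rmC(K,z)$ is a C*-algebra. For the reverse inclusion I would combine the generation result $\rmC(K) = \ca(\rmA(K))$ \cite{DK2019} with the elementary identity $\rmA(K) = \rmA(K,z) + \C 1$, which holds because $a - a(z)1 \in \rmA(K,z)$ for every $a \in \rmA(K)$. The subspace $C + \C 1$ is a unital C*-subalgebra of $\rmC(K)$ (the sum of the closed self-adjoint subalgebra $C$ and the one-dimensional space $\C 1$ is closed and multiplicatively closed), and it contains $\rmA(K,z) + \C 1 = \rmA(K)$; hence $\rmC(K) = \ca(\rmA(K)) \subseteq C + \C 1 \subseteq \rmC(K)$, so $\rmC(K) = C + \C 1$. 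Finally, for $f \in \rmC(K,z)$ write $f = c + \lambda 1$ with $c \in C$, $\lambda \in \C$; applying $\chi$ and using $c(z) = 0$ forces $\lambda = 0$, whence $f = c \in C$. Thus $\rmC(K,z) \subseteq C$, and equality follows.

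The only step with genuine content is the reverse inclusion $\rmC(K,z) \subseteq \ca(\rmA(K,z))$: one must know that the C*-algebra generated by the pointed affine functions already exhausts the whole ideal of continuous nc functions vanishing at $z$. This is precisely where the generation theorem $\rmC(K) = \ca(\rmA(K))$ does the heavy lifting; once $\rmC(K) = C + \C 1$ is in hand, peeling off the scalar unit via the character $\chi = \delta_z$ is routine, and the remaining steps (the split extension and the unitization identification) are formal. It is worth noting that the argument uses pointedness of $(K,z)$ only as a standing hypothesis: concretely it invokes nothing beyond $\delta_z$ being a character and $\rmC(K) = \ca(\rmA(K))$, so the same reasoning would apply to an arbitrary pair $(K,z)$ with $z \in K_1$.
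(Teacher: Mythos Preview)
Your proof is correct. For the second assertion $\rmC(K,z) = \ca(\rmA(K,z))$, your argument is essentially the paper's: both use $\rmA(K) = \rmA(K,z) + \C 1$ together with $\rmC(K) = \ca(\rmA(K))$ to get $\rmC(K) = \ca(\rmA(K,z)) + \C 1$, then apply $\delta_z$ to strip off the scalar. (The paper cites Corollary \ref{cor:pointed-conditions} for $\rmA(K) = \rmA(K,z) + \C 1$, but as you observe, $a \mapsto a - a(z)1$ gives this directly for any $z \in K_1$.)

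For the first assertion $\rmC(K,z)^\sharp = \rmC(K)$, your route is genuinely different and more elementary. The paper argues by showing that every $*$-homomorphism $\pi : \rmC(K,z) \to \calM_n$ extends to a unital $*$-homomorphism on $\rmC(K)$: it restricts $\pi$ to $\rmA(K,z)$, invokes pointedness to write this restriction as evaluation at some $x \in K_n$, and then extends via $\delta_x$. You instead use only that $\rmC(K,z) = \ker \delta_z$ is a codimension-one ideal in the unital C*-algebra $\rmC(K)$, so $\rmC(K)$ is automatically the C*-algebraic (and hence partial) unitization of $\rmC(K,z)$. Your final remark is therefore on point: the paper's proof of the first assertion genuinely consumes the pointedness hypothesis, whereas your argument shows that both conclusions hold for an arbitrary pair $(K,z)$ with $z \in K_1$. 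What the paper's approach buys is that it makes explicit the extension property for representations of $\rmC(K,z)$, which is of independent interest later; what yours buys is a shorter argument and a sharper statement.
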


\begin{proof}
By Corollary \ref{cor:pointed-conditions}, $\rmA(K,z)^\sharp = \rmA(K)$. Hence $\rmA(K) = \rmA(K,z) + \C 1_{\rmA(K)}$. Since $\rmC(K) = \ca(\rmA(K))$, it follows that $\rmC(K) = \rmC(K,z) + \C$. Hence $\rmC(K,z) = \ca(\rmA(K,z))$.

To see that $\rmC(K,z)^\sharp = \rmC(K)$, it suffices to show that for any *-homomorphism $\pi : \rmC(K,z) \to \calM_n$, there is a unital *-homomorphism $\tilde{\pi} : \rmC(K) \to \calM_n$ extending $\pi$. The restriction $\pi|_{\rmA(K,z)}$ is an nc quasistate, so by the assumption that $(K,z)$ is pointed, it is given by evaluation at a point $x \in K_n$. Then the unital *-homomorphism $\delta_x : \rmC(K) \to \calM_n$ extends $\pi|_{\rmA(K,z)}$. Since $\rmA(K,z)$ generates $\rmC(K,z)$, it follows that $\delta_x|_{\rmC(K,z)} = \pi$.
\end{proof}

The next result follows from restricting the *-isomorphism in the statement of Theorem \ref{thm:unital-enveloping-von-neumann}.

\begin{theorem}
Let $(K,z)$ be a compact pointed nc convex set. Then the map $\rmC(K,z)^{**} \to \rmB(K,z) : f \to \tilde{f}$ defined by
\[
\tilde{f}(x) = \delta_x^{**}(f) \quad \text{for} \quad f \in \rmC(K,z)^{**} \text{ and } x \in K,
\]
is a normal *-isomorphism of von Neumann algebras that restricts to a normal completely isometric complete order isomorphism from $\rmA(K,z)^{**}$ onto the operator system $\rmA_b(K,z)$ of pointed bounded affine nc functions.
\end{theorem}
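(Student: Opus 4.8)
The plan is to realize the map in the statement as the restriction of the normal *-isomorphism $\sigma : \rmC(K)^{**} \to \rmB(K)$ from Theorem \ref{thm:unital-enveloping-von-neumann} to the weak*-closed ideal corresponding to $\rmC(K,z)^{**}$, and then to transport the unital operator-system half of that theorem along this restriction. First I would record the single identity that drives everything: for $f \in \rmC(K)^{**}$ one has $\sigma(f)(z) = \delta_z^{**}(f)$ directly from the definition of $\sigma$, so $\delta_z \circ \sigma = \delta_z^{**}$ as normal characters on $\rmC(K)^{**}$, where on the left $\delta_z$ denotes the evaluation on $\rmB(K)$, which is normal by Remark \ref{rem:pointed-functions-ideals}.

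Next I would pin down the domain. By Proposition \ref{prop:unitizations} we have $\rmC(K,z)^\sharp = \rmC(K)$, so $\rmC(K,z)$ is the kernel of the character $\delta_z : \rmC(K) \to \C$ and the short exact sequence $0 \to \rmC(K,z) \to \rmC(K) \xrightarrow{\delta_z} \C \to 0$ splits via the unit. Taking biduals, the bitranspose $\iota^{**}$ of the inclusion $\iota : \rmC(K,z) \to \rmC(K)$ identifies $\rmC(K,z)^{**}$ with the weak*-closed ideal $\ker \delta_z^{**} \subseteq \rmC(K)^{**}$. Under this identification I claim the map $f \mapsto \tilde f$ in the statement is exactly $\sigma \circ \iota^{**}$: both $(\delta_x|_{\rmC(K,z)})^{**}$ and $\delta_x^{**} \circ \iota^{**}$ are normal *-homomorphisms $\rmC(K,z)^{**} \to \calM_n$ that restrict to $\delta_x$ on $\rmC(K,z)$, hence coincide by uniqueness of normal extensions, so that $\tilde f(x) = \delta_x^{**}(\iota^{**}(f)) = \sigma(\iota^{**}(f))(x)$.

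With these identifications in hand the first conclusion is immediate. Since $\sigma$ is a bijection and $\delta_z \circ \sigma = \delta_z^{**}$, the map $\sigma$ carries $\ker \delta_z^{**} = \rmC(K,z)^{**}$ onto $\ker(\delta_z : \rmB(K) \to \C) = \rmB(K,z)$; being the restriction of the normal *-isomorphism $\sigma$ to corresponding weak*-closed ideals, $\sigma \circ \iota^{**} : \rmC(K,z)^{**} \to \rmB(K,z)$ is a normal *-isomorphism. For the operator-system refinement I would use Corollary \ref{cor:pointed-conditions} to write $\rmA(K) = \rmA(K,z) \oplus \C 1$, so that the split sequence $0 \to \rmA(K,z) \to \rmA(K) \xrightarrow{\delta_z} \C \to 0$ yields $\rmA(K,z)^{**} = \ker \delta_z^{**} \cap \rmA(K)^{**}$. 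Since Theorem \ref{thm:unital-enveloping-von-neumann} says $\sigma$ restricts to a normal unital complete order isomorphism $\rmA(K)^{**} \to \rmA_b(K)$, the identity $\delta_z \circ \sigma = \delta_z^{**}$ shows $\sigma$ carries $\rmA(K,z)^{**}$ onto $\rmA_b(K) \cap \ker \delta_z = \rmA_b(K,z)$. This restriction is completely isometric because it is a restriction of the *-isomorphism $\sigma|_{\rmC(K,z)^{**}}$, and it is a complete order isomorphism because it is a restriction of the complete order isomorphism $\sigma|_{\rmA(K)^{**}}$, the relevant orders being those inherited from $\rmC(K,z)^{**}$ and $\rmB(K,z)$ via $\rmC(K,z) = \ca(\rmA(K,z))$.

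The main obstacle I anticipate is bookkeeping around the normal extensions rather than any deep difficulty: one must be careful that $\delta_x$ restricted to the ideal $\rmC(K,z)$ is non-unital, so the symbol $\delta_x^{**}$ appearing in the statement refers to the normal extension of this non-unital *-homomorphism, and one has to verify that it agrees with the restriction to $\iota^{**}(\rmC(K,z)^{**})$ of the extension coming from all of $\rmC(K)^{**}$. This is precisely the uniqueness-of-normal-extension argument above; once it is in place, together with the identity $\delta_z \circ \sigma = \delta_z^{**}$ and the two split exact sequences, every claim follows mechanically by restricting Theorem \ref{thm:unital-enveloping-von-neumann}.
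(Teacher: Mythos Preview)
Your proposal is correct and follows exactly the approach the paper takes: the paper simply asserts that the result follows by restricting the normal *-isomorphism of Theorem \ref{thm:unital-enveloping-von-neumann}, and you have carefully unpacked that restriction via the identities $\delta_z \circ \sigma = \delta_z^{**}$ and the split exact sequences coming from Proposition \ref{prop:unitizations} and Corollary \ref{cor:pointed-conditions}. Your treatment is in fact more detailed than the paper's, which gives no proof beyond that one-line remark.
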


%%%
\section{Minimal and maximal C*-covers} \label{sec:min-max-c-star-covers}
%%%

The deepest results in \cite{DK2019} arise from the interplay between unital operator systems of continuous affine nc functions on compact nc convex sets and unital C*-covers of nc functions on the sets. Connes and van Suijlekom \cite{CS2020} introduced an analogous notion of C*-cover for operator systems. In this section we will review the notion of a unital C*-cover of a unital operator system before considering the more general notion of a C*-cover of an operator system. 

%%%
\subsection{Minimal and maximal unital C*-covers} \label{sec:minimal-maximal-unital-c-star-covers}
%%%

Let $S$ be a unital operator system.
\begin{enumerate}
\item A pair $(A,\iota)$ consisting of a unital C*-algebra $A$ and an embedding $\iota : S \to A$ is a {\em unital C*-cover} of $S$ if $A = \ca(\iota(S))$.
\item If $(A',\iota')$ is another unital C*-cover of $S$, then we will say that $(A,\iota)$ and $(A,\iota')$ are {\em equivalent} if there is a unital *-isomorphism $\pi : A \to A'$ such that $\pi \circ \iota = \iota'$.
\item We will say that a unital C*-cover $(A,\iota)$ of $S$ is {\em maximal} if for any unital C*-cover $(B,\phi)$ of $S$, there is a surjective unital *-homomorphism $\sigma : A \to B$ such that $\phi = \sigma \circ \iota$.
% https://q.uiver.app/?q=WzAsMyxbMCwwLCJTIl0sWzEsMCwiQT1cXG1hdGhybXtDfV4qKFxcaW90YShTKSkiXSxbMSwxLCJCPVxcbWF0aHJte0N9XiooXFxwaGkoUykpIl0sWzAsMSwiXFxpb3RhIiwwLHsic3R5bGUiOnsidGFpbCI6eyJuYW1lIjoiaG9vayIsInNpZGUiOiJ0b3AifX19XSxbMCwyLCJcXHBoaSIsMix7InN0eWxlIjp7InRhaWwiOnsibmFtZSI6Imhvb2siLCJzaWRlIjoidG9wIn19fV0sWzEsMiwiXFxzaWdtYSIsMCx7InN0eWxlIjp7ImJvZHkiOnsibmFtZSI6ImRhc2hlZCJ9LCJoZWFkIjp7Im5hbWUiOiJlcGkifX19XV0=
\[\begin{tikzcd}
	{S} & {A=\mathrm{C}^*(\iota(S))} \\
	& {B=\mathrm{C}^*(\phi(S))}
	\arrow["{\iota}", from=1-1, to=1-2, hook]
	\arrow["{\phi}"', from=1-1, to=2-2, hook]
	\arrow["{\sigma}", from=1-2, to=2-2, dashed, two heads]
\end{tikzcd}\]
\item We will say that a unital C*-cover $(A,\iota)$ of $S$ is {\em minimal} if for any unital C*-cover $(B,\phi)$ of $S$, there is a surjective unital *-homomorphism $\pi : B \to A$ such that $\pi \circ \phi = \iota$.
% https://q.uiver.app/?q=WzAsMyxbMCwwLCJTIl0sWzEsMCwiQT1cXG1hdGhybXtDfV4qKFxcaW90YShTKSkiXSxbMSwxLCJCPVxcbWF0aHJte0N9XiooXFxwaGkoUykpIl0sWzAsMSwiXFxpb3RhIiwwLHsic3R5bGUiOnsidGFpbCI6eyJuYW1lIjoiaG9vayIsInNpZGUiOiJ0b3AifX19XSxbMCwyLCJcXHBoaSIsMix7InN0eWxlIjp7InRhaWwiOnsibmFtZSI6Imhvb2siLCJzaWRlIjoidG9wIn19fV0sWzIsMSwiXFxzaWdtYSIsMix7InN0eWxlIjp7ImhlYWQiOnsibmFtZSI6ImVwaSJ9fX1dXQ==
\[\begin{tikzcd}
	{S} & {A=\mathrm{C}^*(\iota(S))} \\
	& {B=\mathrm{C}^*(\phi(S))}
	\arrow["{\iota}", from=1-1, to=1-2, hook]
	\arrow["{\phi}"', from=1-1, to=2-2, hook]
	\arrow["{\pi}"', from=2-2, to=1-2, dashed, two heads]
\end{tikzcd}\]
\end{enumerate}

The existence and uniqueness of the maximal unital C*-cover of a unital operator system was established by Kirchberg and Wassermann \cite{KW1998}. The following result is non-trivial. It is implied by \cite{DK2019}*{Theorem 4.4.3}.

\begin{theorem} \label{thm:maximal-unital-c-star-cover}
Let $K$ be a compact nc convex set. The maximal unital C*-cover for the unital operator system $\rmA(K)$ is the C*-algebra $\rmC(K)$ of continuous nc functions on $K$.
\end{theorem}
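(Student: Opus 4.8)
The plan is to identify the maximal unital C*-cover with $\rmC(K)$ by verifying the defining factorization property directly, using the fact that unital completely positive maps into matrix algebras are exactly point evaluations. First I would record that the pair $(\rmC(K),\iota)$, where $\iota\colon\rmA(K)\to\rmC(K)$ is the inclusion, is genuinely a unital C*-cover: the unit $1_{\rmA(K)}$ is the unit of $\rmC(K)$, positivity in $\rmA(K)$ is the pointwise positivity inherited from $\rmC(K)$ so that $\iota$ is a unital complete order embedding, and $\rmC(K)=\ca(\rmA(K))$ by the identification recalled above. It then remains to establish maximality.

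So let $(B,\phi)$ be an arbitrary unital C*-cover of $\rmA(K)$; I must produce a surjective unital $*$-homomorphism $\sigma\colon\rmC(K)\to B$ with $\sigma\circ\iota=\phi$. The key observation is that a unital completely positive map into a matrix algebra is precisely an nc state, and nc states on $\rmA(K)$ are point evaluations. Concretely, I would faithfully and unitally represent $B\subseteq\calM_n$ for a suitable cardinal $n$, so that $\phi\colon\rmA(K)\to\calM_n$ becomes an nc state on $\rmA(K)$ at level $n$. By Theorem \ref{thm:unital-dual-equivalence}, $\phi$ is evaluation at a point: there is $x\in K_n$ with $\phi=\delta_x|_{\rmA(K)}$. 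Since $\delta_x\colon\rmC(K)\to\calM_n$ is a unital $*$-homomorphism, setting $\sigma=\delta_x$ gives $\sigma\circ\iota=\phi$, and
\[
\sigma(\rmC(K))=\delta_x(\ca(\rmA(K)))=\ca(\delta_x(\rmA(K)))=\ca(\phi(\rmA(K)))=B,
\]
so $\sigma$ is the required surjection. This is exactly the maximality property, so $\rmC(K)$ is the maximal unital C*-cover of $\rmA(K)$.

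The one genuinely delicate point — and where I expect the real work to lie — is the cardinality bookkeeping in the step ``represent $B\subseteq\calM_n$''. A point evaluation $\delta_x$ is only available for $x\in K$, and $K$ ranges over levels $n\le\kappa$, so I need the faithful representation of $B$ to live on a Hilbert space of dimension at most $\kappa$. This is precisely the role of the standing infinite cardinal $\kappa$: since $B=\ca(\phi(\rmA(K)))$ is generated by a copy of $\rmA(K)$, its density character is at most that of $\rmA(K)$, and a C*-algebra of density character $\lambda$ admits a faithful unital representation on a Hilbert space of dimension at most $\lambda$ (a direct sum of $\le\lambda$ cyclic representations coming from a faithful family of states). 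With $\kappa$ chosen to dominate the density character of $\rmA(K)$, every unital C*-cover is captured by some $K_n$ and the argument goes through. An essentially equivalent route would be to verify the Kirchberg--Wassermann universal property outright — that every unital completely positive map $\rmA(K)\to\calB(H)$ extends to a unital $*$-homomorphism on $\rmC(K)$ — again via the point evaluations $\delta_x$ together with the enveloping von Neumann algebra identification $\rmC(K)^{**}\cong\rmB(K)$ of Theorem \ref{thm:unital-enveloping-von-neumann}.
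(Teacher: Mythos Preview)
Your argument is correct and is essentially the same as the one the paper uses. The paper does not spell out a proof of this particular statement, instead citing \cite{DK2019}*{Theorem 4.4.3}; however, the pointed analogue (Theorem~\ref{thm:min-max-c-star-algebras}(1)) is proved in the paper by exactly your method: embed the C*-cover $B$ in some $\calM_n$, recognise $\phi$ as evaluation at a point $x\in K_n$, and take $\sigma=\delta_x$. Your discussion of the cardinality bookkeeping is more careful than the paper's own treatment, which simply asserts ``we can assume that $B\subseteq\calM_n$ for some $n$'' without further comment.
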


The existence and uniqueness of the minimal unital C*-cover of a unital operator system was established by Hamana \cite{Ham1979}. The results in \cite{DK2019} and \cite{KS2019} imply a description in terms of the nc state space of the operator system, which we will now describe.

Let $K$ be a compact nc convex set. It follows from Theorem \ref{thm:maximal-unital-c-star-cover} that there is a surjective *-homomorphism $\pi$ from $\rmC(K)$ onto the minimal unital C*-cover of $\rmA(K)$. A result of Dritschel and McCullough \cite{DM2005} implies that $\ker \pi$ is the {\em boundary ideal} in $\rmC(K)$ relative to $\rmA(K)$, i.e.\ the unique largest ideal in $\rmC(K)$ with the property that the restriction of the corresponding quotient *-homomorphism to $\rmA(K)$ is completely isometric.

Let $I_{\sb} = \ker \pi$ and let $\rmC(\sb) = \rmC(K)/I_{\sb}$. We will refer to $\rmC(\sb)$ as the minimal unital C*-cover of $\rmA(K)$. In order to explain this choice of notation and give a description of $\rmC(\sb)$ in terms of $K$, we require the spectral topology from \cite{KS2019}*{Section 9}.

\begin{definition}
Let $K$ be a compact nc convex set. We will say that a point $x \in K$ is {\em reducible} if $x$ is unitarily equivalent to a direct sum $x \simeq y \oplus z$ for points $y,z \in K$. We will say that $x$ is {\em irreducible} if it is not reducible, and we will write $\Irr(K)$ for the set of irreducible points in $K$. 
\end{definition}

\begin{remark}
Note that a point $x \in K$ is irreducible if and only if the corresponding *-homomorphism $\delta_x$ is. In particular, $\partial K \subseteq \operatorname{Irr}(K)$.
\end{remark}

Let $K$ be a compact nc convex set. Let $\Spec(\rmC(K))$ denote the C*-algebraic spectrum of $\rmC(K)$, i.e.\ the set of unitary equivalence classes of irreducible representations of $\rmC(K)$ equipped with the hull-kernel topology. For a point $x \in \operatorname{Irr}(K)$, we have already observed that the *-homomorphism $\delta_x$ is irreducible. Hence letting $[\delta_x]$ denote the unitary equivalence class of $\delta_x$, $[\delta_x] \in \Spec(\rmC(K))$. Note that the map $\operatorname{Irr}(K) \to \Spec(\rmC(K)) : x \to [\delta_x]$ is surjective.

\begin{definition}
The {\em spectral topology} on $\Irr(K)$ is the pullback of the hull-kernel topology on $\Spec(\rmC(K))$. Specifically, the open subsets of $\Irr(K)$ are the preimages of open subsets of $\Spec(\rmC(K))$ under the map $\Irr(K) \to \Spec(\rmC(K)) : x \to [\delta_x]$.
\end{definition}

The results in \cite{KS2019}*{Section 9} imply that
\[
I_{\sb} = \{f \in \rmC(K) : f(x) = 0 \text{ for all } x \in \overline{\partial K}\},
\]
where $\overline{\partial K}$ denotes the closure of $\partial K$ with respect to the spectral topology on $\operatorname{Irr}(K)$.

%%%
\subsection{Minimal and maximal C*-covers} \label{sec:minimal-maximal-non-unital-c-star-covers}
%%%

Connes and van Suijlekom \cite{CS2020} introduced an analogue for operator systems of a unital C*-cover of a unital operator system from Section~\ref{sec:min-max-c-star-covers}, which they refer to as a \csharp-cover. We will instead refer to C*-covers.
\begin{definition}
Let $S$ be an operator system.
\begin{enumerate}
\item We will say that a pair $(A,\iota)$ consisting of a C*-algebra $A$ and an embedding $\iota : S \to A$ is a {\em C*-cover} of $S$ if $A = \ca(\iota(S))$.
\item If $(A',\iota')$ is another C*-cover of $S$, then we will say that $(A,\iota)$ and $(A,\iota')$ are {\em equivalent} if there is a *-isomorphism $\pi : A \to A'$ such that $\pi \circ \iota = \iota'$.
\item We will say that a C*-cover $(A,\iota)$ of $S$ is {\em maximal} if for any C*-cover $(B,\phi)$ of $S$ there is a surjective *-homomorphism $\sigma : A \to B$ such that $\phi = \sigma \circ \iota$.
% https://q.uiver.app/?q=WzAsMyxbMCwwLCJTIl0sWzEsMCwiQT1cXG1hdGhybXtDfV4qKFxcaW90YShTKSkiXSxbMSwxLCJCPVxcbWF0aHJte0N9XiooXFxwaGkoUykpIl0sWzAsMSwiXFxpb3RhIiwwLHsic3R5bGUiOnsidGFpbCI6eyJuYW1lIjoiaG9vayIsInNpZGUiOiJ0b3AifX19XSxbMCwyLCJcXHBoaSIsMix7InN0eWxlIjp7InRhaWwiOnsibmFtZSI6Imhvb2siLCJzaWRlIjoidG9wIn19fV0sWzEsMiwiXFxzaWdtYSIsMCx7InN0eWxlIjp7ImJvZHkiOnsibmFtZSI6ImRhc2hlZCJ9LCJoZWFkIjp7Im5hbWUiOiJlcGkifX19XV0=
\[\begin{tikzcd}
	{S} & {A=\mathrm{C}^*(\iota(S))} \\
	& {B=\mathrm{C}^*(\phi(S))}
	\arrow["{\iota}", from=1-1, to=1-2, hook]
	\arrow["{\phi}"', from=1-1, to=2-2, hook]
	\arrow["{\sigma}", from=1-2, to=2-2, dashed, two heads]
\end{tikzcd}\]
\item We will say that a C*-cover $(A,\iota)$ of $S$ is {\em minimal} if for any C*-cover $(B,\phi)$ of $S$, there is a surjective *-homomorphism $\pi : B \to A$ such that $\pi \circ \phi = \iota$.
% https://q.uiver.app/?q=WzAsMyxbMCwwLCJTIl0sWzEsMCwiQT1cXG1hdGhybXtDfV4qKFxcaW90YShTKSkiXSxbMSwxLCJCPVxcbWF0aHJte0N9XiooXFxwaGkoUykpIl0sWzAsMSwiXFxpb3RhIiwwLHsic3R5bGUiOnsidGFpbCI6eyJuYW1lIjoiaG9vayIsInNpZGUiOiJ0b3AifX19XSxbMCwyLCJcXHBoaSIsMix7InN0eWxlIjp7InRhaWwiOnsibmFtZSI6Imhvb2siLCJzaWRlIjoidG9wIn19fV0sWzIsMSwiXFxzaWdtYSIsMix7InN0eWxlIjp7ImhlYWQiOnsibmFtZSI6ImVwaSJ9fX1dXQ==
\[\begin{tikzcd}
	{S} & {A=\mathrm{C}^*(\iota(S))} \\
	& {B=\mathrm{C}^*(\phi(S))}
	\arrow["{\iota}", from=1-1, to=1-2, hook]
	\arrow["{\phi}"', from=1-1, to=2-2, hook]
	\arrow["{\pi}"', from=2-2, to=1-2, two heads]
\end{tikzcd}\]
\end{enumerate}
\end{definition}

\begin{remark}
Let $(K,z)$ be a pointed compact nc convex set. If $(A,\iota)$ is a C*-cover for $\rmA(K,z)$, then since $\phi$ is an embedding, the unitization $\phi^\sharp : \rmA(K) \to A^\sharp$ is an embedding. Hence $(A^\sharp,\iota^\sharp)$ is a unital C*-cover of $\rmA(K)$. 
\end{remark}

The existence and uniqueness of the minimal C*-cover of an operator system was established in \cite{CS2020}*{Theorem 2.2.5} under the name \csharp-envelope. In this section we will prove the existence and uniqueness of the maximal C*-cover, and we will describe the maximal and minimal C*-covers of an operator system in terms of the maximal and minimal unital C*-covers of its unitization.

\begin{proposition}
Let $S$ be an operator system. If the maximal and minimal C*-covers of $S$ exist, then they are unique up to equivalence.
\end{proposition}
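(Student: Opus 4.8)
The plan is to run the standard universal-property uniqueness argument separately for the maximal and the minimal cover, the only nonformal ingredient being that a C*-cover $(A,\iota)$ satisfies $A = \ca(\iota(S))$, so that any $*$-homomorphism $A \to A$ restricting to the identity on $\iota(S)$ must be $\id_A$. Indeed, a $*$-homomorphism between C*-algebras is continuous, linear, multiplicative and $*$-preserving, so it is determined by its values on any generating set; a self-map fixing $\iota(S)$ therefore fixes the $*$-algebra generated by $\iota(S)$, and by continuity its closure $A$.

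First I would treat the maximal case. Suppose $(A,\iota)$ and $(A',\iota')$ are both maximal C*-covers of $S$. Applying maximality of $(A,\iota)$ to the C*-cover $(A',\iota')$ yields a surjective $*$-homomorphism $\sigma : A \to A'$ with $\iota' = \sigma \circ \iota$, and applying maximality of $(A',\iota')$ to the C*-cover $(A,\iota)$ yields a surjective $*$-homomorphism $\sigma' : A' \to A$ with $\iota = \sigma' \circ \iota'$. Then $\sigma' \circ \sigma : A \to A$ satisfies $(\sigma' \circ \sigma) \circ \iota = \sigma' \circ \iota' = \iota$, so it restricts to the identity on $\iota(S)$; since $A = \ca(\iota(S))$, the remark above forces $\sigma' \circ \sigma = \id_A$, and symmetrically $\sigma \circ \sigma' = \id_{A'}$. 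Hence $\sigma$ is a $*$-isomorphism with $\sigma \circ \iota = \iota'$, so the two covers are equivalent.

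Next I would treat the minimal case by the same bookkeeping with the arrows reversed. If $(A,\iota)$ and $(A',\iota')$ are both minimal C*-covers of $S$, then minimality produces surjective $*$-homomorphisms $\pi : A' \to A$ with $\pi \circ \iota' = \iota$ and $\pi' : A \to A'$ with $\pi' \circ \iota = \iota'$. The composite $\pi' \circ \pi : A' \to A'$ then restricts to the identity on $\iota'(S)$, whence $\pi' \circ \pi = \id_{A'}$ because $A' = \ca(\iota'(S))$, and likewise $\pi \circ \pi' = \id_A$; thus $\pi$ is the desired equivalence. There is no genuine obstacle here—the argument is purely formal—and the only point that rewards a moment's care is matching the defining diagrams exactly, i.e.\ checking that the composites fix the correct copy of $S$, which is precisely the verification that $(\sigma' \circ \sigma) \circ \iota = \iota$ and its analogue $(\pi' \circ \pi) \circ \iota' = \iota'$.
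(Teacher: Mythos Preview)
Your proof is correct and follows essentially the same approach as the paper: both obtain the mutually inverse $*$-homomorphisms from the two universal properties and conclude they are isomorphisms. You are more explicit than the paper about why $\sigma'\circ\sigma=\id_A$ (namely, because it fixes the generating set $\iota(S)$), whereas the paper simply asserts $\sigma^{-1}=\sigma'$; otherwise the arguments coincide.
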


\begin{proof}
Let $(A,\iota)$ and $(A',\iota')$ be maximal C*-covers for $S$. Then by definition there are surjective homomorphisms $\sigma : A \to A'$ and $\sigma' : A' \to A$ such that $\iota' = \sigma \circ \iota$ and $\iota = \sigma' \circ \iota'$. Hence $\sigma^{-1} = \sigma'$, so $\sigma$ is a *-isomorphism and hence $(A,\iota)$ and $(A,\iota')$ are equivalent. The proof for the minimal C*-cover is similar.
\end{proof}

\begin{theorem} \label{thm:min-max-c-star-algebras}
Let $(K,z)$ be a compact pointed nc convex set.
\begin{enumerate}
\item The C*-algebra $\rmC(K,z)$ is a maximal C*-cover for $\rmA(K,z)$ with respect to the canonical inclusion.
\item Let $I_{\sb}$ denote the boundary ideal in the C*-algebra $\rmC(K)$ of continuous nc functions on $K$ relative to $\rmA(K)$, so that the C*-algebra $\rmC(K)/I_{\sb} \cong \rmC(\sb)$ is the minimal unital C*-cover of $\rmA(K)$, and let $I_{(\sb,z)} = I_{\sb} \cap \rmC(K,z)$. Then the C*-algebra $\rmC(K,z)/I_{(\sb,z)}$ is the minimal C*-cover of $\rmA(K,z)$ with respect to the quotient *-homomorphism. In particular, the C*-algebra generated by the image of $\rmA(K,z)$ under the canonical embedding of $\rmA(K)$ into $\rmC(\sb)$ is isomorphic to $\rmC(\sb,z)$
\end{enumerate}
\end{theorem}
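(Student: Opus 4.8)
The plan is to reduce both parts to the corresponding facts about unital C*-covers of $\rmA(K)$ from Section~\ref{sec:minimal-maximal-unital-c-star-covers}, using the identities $\rmC(K,z)^\sharp = \rmC(K)$ and $\rmA(K,z)^\sharp = \rmA(K)$ (Proposition~\ref{prop:unitizations} and Corollary~\ref{cor:pointed-conditions}) together with the Remark that the unitization of a C*-cover of $\rmA(K,z)$ is a unital C*-cover of $\rmA(K)$. Throughout I will use that $\rmC(K,z)$ is the ideal of functions in $\rmC(K)$ vanishing at $z$, that $\rmC(K) = \rmC(K,z) + \C 1$, and hence that the character $\delta_z : \rmC(K) \to \C$ is precisely the projection onto the scalar summand of the unitization $\rmC(K,z)^\sharp = \rmC(K)$.

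For (1), I first note that $(\rmC(K,z),\iota)$ is a C*-cover: by Proposition~\ref{prop:unitizations} we have $\rmC(K,z) = \ca(\rmA(K,z))$, and the unitization $\iota^\sharp$ of the canonical inclusion is the canonical inclusion $\rmA(K) \to \rmC(K)$, which is a unital embedding since $\rmC(K)$ is the maximal unital C*-cover of $\rmA(K)$ (Theorem~\ref{thm:maximal-unital-c-star-cover}); hence $\iota$ is an embedding. To prove maximality, let $(B,\phi)$ be any C*-cover of $\rmA(K,z)$. Then $(B^\sharp,\phi^\sharp)$ is a unital C*-cover of $\rmA(K)$, so by maximality of $\rmC(K)$ there is a surjective unital $*$-homomorphism $\tilde\sigma : \rmC(K) \to B^\sharp$ with $\tilde\sigma \circ \iota^\sharp = \phi^\sharp$. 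The key step is to show that $\tilde\sigma$ carries the ideal $\rmC(K,z)$ into the ideal $B \triangleleft B^\sharp$. For this I compare the two characters $\tau_B \circ \tilde\sigma$ and $\delta_z$ on $\rmC(K)$, where $\tau_B : B^\sharp \to \C$ is the projection onto the scalar summand: both are unital $*$-homomorphisms, and they agree on the generating unital operator system $\rmA(K) = \rmA(K,z) + \C 1$ because $\phi(\rmA(K,z)) \subseteq B = \ker\tau_B$ while $a(z) = 0$ for $a \in \rmA(K,z)$. Hence $\tau_B \circ \tilde\sigma = \delta_z$, so $\tilde\sigma(\rmC(K,z)) \subseteq \ker\tau_B = B$, and the restriction $\sigma := \tilde\sigma|_{\rmC(K,z)}$ is the required map; its surjectivity onto $B$ follows from surjectivity of $\tilde\sigma$ together with the same character identity.

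For (2), the central observation is that for an ideal $J \triangleleft \rmC(K,z)$ (equivalently, an ideal of $\rmC(K)$ contained in $\rmC(K,z)$, using $\rmC(K) = \rmC(K,z) + \C 1$) the unitization of the quotient is $(\rmC(K,z)/J)^\sharp = \rmC(K,z)^\sharp/J = \rmC(K)/J$, and under this identification $\psi_J^\sharp$ is the composition of $\iota^\sharp$ with the quotient $\rmC(K) \to \rmC(K)/J$, where $\psi_J$ denotes the induced map $\rmA(K,z) \to \rmC(K,z)/J$. Since an embedding of operator systems is by definition a map whose unitization is a unital embedding, and since a unital map between unital operator systems is a complete order embedding if and only if it is completely isometric, the quotient $\rmC(K,z)/J$ is a C*-cover of $\rmA(K,z)$ if and only if $\rmA(K) \to \rmC(K)/J$ is completely isometric. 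By the defining property of the boundary ideal $I_{\sb}$ (together with the monotonicity that passing to a smaller ideal preserves complete isometry on $\rmA(K)$, since the quotient $\rmC(K)/J \to \rmC(K)/I_{\sb}$ is contractive), this holds if and only if $J \subseteq I_{\sb}$; combined with $J \subseteq \rmC(K,z)$ this is equivalent to $J \subseteq I_{(\sb,z)} = I_{\sb} \cap \rmC(K,z)$. Thus $I_{(\sb,z)}$ is itself the largest ideal of $\rmC(K,z)$ whose quotient is a C*-cover, and the usual argument (any cover $\rmC(K,z)/K_B$ has $K_B \subseteq I_{(\sb,z)}$, so $\rmC(K,z) \to \rmC(K,z)/I_{(\sb,z)}$ factors through $B \cong \rmC(K,z)/K_B$) shows $\rmC(K,z)/I_{(\sb,z)}$ is the minimal C*-cover. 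Finally, the ``in particular'' statement is the second isomorphism theorem: $\rmC(K,z)/I_{(\sb,z)} = \rmC(K,z)/(\rmC(K,z)\cap I_{\sb}) \cong (\rmC(K,z) + I_{\sb})/I_{\sb}$, which is exactly the C*-subalgebra of $\rmC(\sb) = \rmC(K)/I_{\sb}$ generated by the image of $\rmA(K,z)$.

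I expect the main obstacle to be the bookkeeping around the definition of an embedding of a (possibly nonunital) operator system, which is phrased through the unitization rather than intrinsically: the entire argument for (2) hinges on translating ``$\psi_J$ is an embedding'' into the purely unital statement ``$\rmA(K) \to \rmC(K)/J$ is completely isometric'' and on the clean identity $(\rmC(K,z)/J)^\sharp = \rmC(K)/J$. A secondary delicate point is the character identification $\tau_B \circ \tilde\sigma = \delta_z$ in part (1), which is precisely what forces $\tilde\sigma$ to respect the ideal decompositions $\rmC(K) = \rmC(K,z)+\C 1$ and $B^\sharp = B + \C 1$, and is where the pointedness of $\rmA(K,z)$ (vanishing at $z$) enters.
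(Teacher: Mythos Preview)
Your argument is correct. For part (2) it is essentially the paper's proof: both reduce to the defining property of the boundary ideal $I_{\sb}$ by observing that the unitization of the quotient map $\rmA(K,z)\to\rmC(K,z)/J$ is the quotient map $\rmA(K)\to\rmC(K)/J$, so that $\psi_J$ is an embedding exactly when $J\subseteq I_{\sb}$. The paper phrases this as ``$\ker\sigma^\sharp\subseteq I_{\sb}$ hence $\ker\sigma\subseteq I_{(\sb,z)}$'' after invoking part (1), whereas you first characterize all ideals yielding C*-covers; the content is the same.

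For part (1) your route genuinely differs from the paper's. The paper argues concretely: faithfully represent $B$ on some $H_n$, so that the embedding $\phi:\rmA(K,z)\to B\subseteq\calM_n$ is an nc quasistate and hence equals evaluation at some $x\in K_n$; then the *-homomorphism $\delta_x:\rmC(K,z)\to\calM_n$ is the required surjection onto $B=\ca(\phi(\rmA(K,z)))$. You instead work categorically, unitizing to reduce to the known maximality of $\rmC(K)$ over $\rmA(K)$ (Theorem~\ref{thm:maximal-unital-c-star-cover}) and then descending via the character identity $\tau_B\circ\tilde\sigma=\delta_z$. The paper's argument is shorter and exploits directly the identification of $K$ with the nc quasistate space; your argument is more uniform with part (2) and makes explicit that both parts are formal consequences of the unital theory together with $\rmC(K,z)^\sharp=\rmC(K)$. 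One small point worth making precise in your write-up: that a closed ideal $J$ of $\rmC(K,z)$ is automatically an ideal of $\rmC(K)$ (your parenthetical ``equivalently'' glosses over the standard fact that ideals of ideals are ideals in C*-algebras), which is what justifies forming $\rmC(K)/J$ and identifying it with $(\rmC(K,z)/J)^\sharp$.
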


\begin{proof}
(1) Let $(B,\phi)$ be a C*-cover for $\rmA(K,z)$. We can assume that $B \subseteq \calM_n$ for some $n$, so that $\phi = x$ and $B = \delta_x(\rmC(K,z))$ for some $x \in K_n$. It follows that $\rmC(K,z)$ is a maximal C*-cover for $\rmA(K,z)$ with respect to the canonical inclusion. 

(2) Since $\rmC(K)/I_{\sb}$ is a unital C*-cover for $\rmA(K)$, $\rmC(K,z) / I_{(\sb,z)}$ is a C*-cover for $\rmA(K,z)$. To see that it is minimal, it suffices to show that if $(B,\phi)$ is any C*-cover for $\rmA(K,z)$, then $\ker \sigma \subseteq I_{(\sb,z)}$. 

By (1), there is a surjective unital *-homomorphism $\sigma : \rmC(K,z) \to B$ such that $\sigma|_{\rmA(K,z)} = \phi$. The unitization $\sigma^\sharp : \rmC(K) \to B^\sharp$ is a unital *-homomorphism satisfying $\sigma^\sharp|_{\rmA(K)} = \phi^\sharp$. Since $\phi$ is an embedding, $\phi^\sharp$ is completely isometric, so $\ker \sigma^\sharp \subseteq I_{\sb}$. Hence $\ker \sigma \subseteq I_{(\sb,z)}$.
\end{proof}

\begin{definition}
Let $(K,z)$ be a pointed compact nc convex set. Let $I_{(\sb,z)}$ denote the ideal in $\rmC(K,z)$ from Theorem \ref{thm:min-max-c-star-algebras} and let $\rmC(\sb,z) = \rmC(K,z)/I_{(\sb,z)}$. We will refer to $\rmC(\sb,z)$ as the {\em minimal C*-cover} of $\rmA(K,z)$, and we will refer to the corresponding quotient *-homomorphism as the {\em canonical embedding} of $\rmA(K,z)$ into $\rmC(\sb,z)$.
\end{definition}

\begin{remark} \label{rem:generalized-boundary-ideal}
The ideal $I_{(\sb,z)} = \ker \pi$ is a pointed analogue of the boundary ideal from Section \ref{sec:minimal-maximal-unital-c-star-covers}. It is the largest ideal in $\rmC(K,z)$ such that the corresponding quotient *-homomorphism restricts to an embedding of $\rmA(K,z)$.
\end{remark}

\begin{example}
Define $a,b \in \calM_2$ by
\[
a = \left[\begin{matrix} 1 & 0 \\ 0 & -1 \end{matrix}\right],\quad 
b = \left[\begin{matrix} 1 & 0 \\ 0 & -1/2 \end{matrix}\right].
\]
Let $S = \operatorname{span}\{a\}$ and $T = \operatorname{span}\{b\}$. Then $S$ and $T$ are nonunital operator systems, and it is not difficult to verify that $S$ and $T$ are isomorphic to the operator systems considered in Example \ref{ex:example-pointed-1} and Example \ref{ex:first-example-pointed-2} respectively.

Let $(K,z)$ denote the nc quasistate space of $S$. Note that this is the same $(K,z)$ from Example \ref{ex:example-pointed-1}. Since $K_1 = [-1,1]$ is a simplex, the results in \cite{KS2019} imply that $\partial K = \partial K_1 = \{-1,1\}$. Hence identifying $S$ with $\rmA(K,z)$, the minimal C*-cover of $S^\sharp = \rmA(K)$ is $\rmC(\sb) = \rmC(\{-1,1\}) \cong \C^2$. 

Let $\iota : \rmA(K) \to \rmC(\sb)$ denote the canonical embedding. Then $\iota(\rmA(K,z)) \cong \{(-\alpha,\alpha) : \alpha \in \C\} \cong \C$. Hence $\rmC(\sb,z) \cong \C$. Note that $\rmC(\sb,z)$ is unital even though $\rmA(K,z)$ is nonunital.

Define $\theta : S \to T$ defined by $\theta(\alpha a) = \alpha b$ for $\alpha \in \C$. Then arguing as in Example \ref{ex:first-example-pointed-2}, $\theta$ is an isomorphism. Hence the minimal C*-cover of $T$ is also isomorphic to $\C^2$.
\end{example}

\begin{example}
Let $A$ be a C*-algebra with nc quasistate space $(K,z)$. Then $A$ is clearly a C*-cover of itself with respect to the identity map. By definition, there is a surjective *-homomorphism $\pi : A \to \rmC(\sb,z)$ that is completely isometric on $A$. Therefore, $\pi$ is a *-isomorphism, implying $A = \rmC(\sb,z)$.
\end{example}

Let $K$ be a compact nc convex set. A useful fact implied by \cite{DK2015}*{Theorem 3.4} and \cite{DK2019}*{Proposition 5.2.4} is that the direct sum of the points in $\partial K$ extends to a faithful representation of the minimal unital C*-cover $\rmC(\sb)$. Specifically, define $y \in K$ by $y = \oplus_{x \in \partial k} x$. Then the *-homomorphism $\delta_y$ satisfies $\ker \delta_y = \ker I_{\sb}$. Hence $\delta_y(\rmC(K)) \cong \rmC(\sb)$. The following result is an analogue of this fact for the minimal C*-cover of an operator system.

\begin{proposition} \label{prop:factors-through}
Let $(K,z)$ be a pointed compact nc convex set. Define $y \in K$ by $y = \oplus_{x \in \partial K \setminus \{z\}} x$. Then the *-homomorphism $\delta_y$ satisfies $\ker \delta_y = I_{(\sb,z)}$, where $I_{(\sb,z)}$ is the ideal from Theorem \ref{thm:min-max-c-star-algebras}. Hence $\delta_y(\rmC(K,z)) \cong \rmC(\sb,z)$.
\end{proposition}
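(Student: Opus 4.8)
The plan is to prove the two claims—that $\ker \delta_y = I_{(\sb,z)}$ and consequently $\delta_y(\rmC(K,z)) \cong \rmC(\sb,z)$—by leveraging the unital analogue stated just before the proposition together with the structural results from Section \ref{sec:min-max-c-star-covers}. First I would recall the relevant unital fact: for the point $u = \oplus_{x \in \partial K} x \in K$, the representation $\delta_u$ of $\rmC(K)$ satisfies $\ker \delta_u = I_{\sb}$, so that $\delta_u(\rmC(K)) \cong \rmC(\sb)$. The strategy is to relate $\delta_y$ to $\delta_u$ and then intersect with the ideal $\rmC(K,z)$.

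The key observation is that $u$ and $y$ differ only by the summand $z$, i.e.\ $u \simeq y \oplus z$ (up to unitary equivalence and the multiplicity with which $z$ appears, which is irrelevant for kernels). Since $z \in K_1$, the representation $\delta_z : \rmC(K) \to \calM_1 = \C$ is a character, and crucially $\rmC(K,z) = \ker \delta_z \cap \rmC(K)$ by the very definition of pointed continuous nc functions (those vanishing at $z$); indeed $\rmC(K,z)$ is precisely the kernel of $\delta_z$ restricted to $\rmC(K)$, which is why it is a codimension-one ideal with $\rmC(K,z)^\sharp = \rmC(K)$ by Proposition \ref{prop:unitizations}. The plan is therefore:
\begin{enumerate}
\item Show $\ker \delta_y \cap \rmC(K,z) = \ker \delta_u \cap \rmC(K,z)$. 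Since $\delta_u = \delta_y \oplus \delta_z$ (up to unitary equivalence), we have $\ker \delta_u = \ker \delta_y \cap \ker \delta_z$. Intersecting with $\rmC(K,z) \subseteq \ker \delta_z$ kills the $\delta_z$ factor, giving $\ker \delta_u \cap \rmC(K,z) = \ker \delta_y \cap \rmC(K,z)$.
\item Identify $\ker \delta_u \cap \rmC(K,z) = I_{\sb} \cap \rmC(K,z) = I_{(\sb,z)}$ using the unital fact $\ker \delta_u = I_{\sb}$ and the definition of $I_{(\sb,z)}$ from Theorem \ref{thm:min-max-c-star-algebras}.
\item Upgrade $\ker \delta_y \cap \rmC(K,z) = I_{(\sb,z)}$ to the full statement $\ker \delta_y = I_{(\sb,z)}$, where $\delta_y$ now denotes the restriction of $\delta_y$ to the C*-algebra $\rmC(K,z)$ (which is the natural reading, since the proposition concludes $\delta_y(\rmC(K,z)) \cong \rmC(\sb,z)$).
\end{enumerate}

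The main obstacle, and the point requiring genuine care, is step 1: I must verify that $z$ does not secretly contribute to the range of $\delta_y$ on $\rmC(K,z)$, i.e.\ that passing from $\delta_u$ to $\delta_y$ loses no information once we restrict to functions vanishing at $z$. This is where the hypothesis that we omit the single point $z$ from the direct sum (rather than an arbitrary point) is essential: every $f \in \rmC(K,z)$ already satisfies $f(z) = 0$, so $\delta_z(f) = 0$ and the $z$-summand of $\delta_u(f)$ is automatically zero. Thus for $f \in \rmC(K,z)$ we have $\delta_u(f) \simeq \delta_y(f) \oplus 0$, whence $f \in \ker \delta_y$ if and only if $f \in \ker \delta_u$. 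Once this is established, the final isomorphism $\delta_y(\rmC(K,z)) \cong \rmC(K,z)/\ker\delta_y = \rmC(K,z)/I_{(\sb,z)} = \rmC(\sb,z)$ follows immediately from the first isomorphism theorem and the definition of $\rmC(\sb,z)$, completing the proof.
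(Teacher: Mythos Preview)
Your proposal is correct and follows essentially the same approach as the paper's proof. The paper argues that $\ker(\delta_y \oplus \delta_z) = I_{\sb}$ on $\rmC(K)$, intersects with $\rmC(K,z)$ to obtain $I_{(\sb,z)}$, and then observes that $\delta_z$ vanishes on $\rmC(K,z)$ so the $z$-summand contributes nothing---precisely your steps (1)--(3), with your additional care about whether $z$ lies in $\partial K$ being a minor refinement that the paper leaves implicit.
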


\begin{proof}
From above, $(\delta_y \oplus \delta_z)(\rmC(K)) \cong \rmC(\sb)$. So considered as a *\nobreakdash-representation of $\rmC(K)$, $\ker (\delta_y \oplus \delta_z) = I_{\sb}$. By Theorem \ref{thm:min-max-c-star-algebras}, $\ker (\delta_y \oplus \delta_z) \cap \rmC(K,z) = I_{(\sb,z)}$. Since $\delta_y$ is zero on $\rmA(k,z)$ and so also on $\rmC(K,z)$, it follows that $\ker \delta_y \cap \rmC(K,z) = I_{(\sb,z)}$. Hence by Theorem \ref{thm:min-max-c-star-algebras}, $\delta_y(\rmC(K,z)) \cong \rmC(\sb,z)$.
\end{proof}

We will say more about the minimal C*-cover in Section~\ref{sec:characterization-unital-operator-systems}.

%%%
\section{Characterization of unital operator systems} \label{sec:characterization-unital-operator-systems}
%%%

In this section we will apply the results from Section \ref{sec:min-max-c-star-covers} to establish a characterization of operator systems that are unital in terms of their nc quasistate space. We note that a closely related problem, of characterizing operator spaces that are unital operator systems, has been considered by Blecher and Neal \cite{BN2011}. 

\begin{theorem} \label{thm:characterization-unit}
Let $(K,z)$ be a pointed compact nc convex set. The following are equivalent for a pointed continuous affine nc function $e \in \rmA(K,z)$:
\begin{enumerate}
\item The function $e$ is a distinguished archimedean matrix order unit for $\rmA(K,z)$.
\item The image of $e$ under the canonical embedding of $\rmA(K,z)$ into $\rmC(\sb,z)$ is the identity.
\item For every $n$ and every $x \in (\partial K \setminus \{z\})_n$, $e(x) = 1_n$.
\end{enumerate}
\end{theorem}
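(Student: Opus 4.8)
The plan is to prove the cycle of implications $(1)\Rightarrow(3)\Rightarrow(2)\Rightarrow(1)$, using Proposition \ref{prop:factors-through} to make the canonical embedding concrete. Recall from that proposition that if $y=\oplus_{x\in\partial K\setminus\{z\}}x$, then the canonical embedding of $\rmA(K,z)$ into $\rmC(\sb,z)$ is, up to the isomorphism $\delta_y(\rmC(K,z))\cong\rmC(\sb,z)$, simply the restriction of $\delta_y$. Thus the image of $a\in\rmA(K,z)$ is the block-diagonal operator $\delta_y(a)=\oplus_x a(x)$, which makes conditions (2) and (3) directly comparable.

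The two directions not involving the internal structure of $K$ are routine. For $(3)\Rightarrow(2)$: if $e(x)=1_{n}$ for every $x\in(\partial K\setminus\{z\})_{n}$, then $\delta_y(e)=\oplus_x e(x)$ is the identity operator on the representing Hilbert space, and since it lies in $\delta_y(\rmC(K,z))$ it is the identity of that algebra, so the image of $e$ is the identity. For $(2)\Rightarrow(1)$: the canonical embedding $\iota:\rmA(K,z)\to\rmC(\sb,z)$ is by definition a completely isometric complete order embedding with $\iota(e)$ the identity; pulling the order-unit, archimedean, and order-unit-norm properties of the identity of a C*-algebra back along $\iota$ (using that $\iota_n(1_n\otimes e)=1_n\otimes 1$ and that $\iota$ reflects positivity and preserves norms) shows that $e$ is a distinguished archimedean matrix order unit.

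The substantial implication is $(1)\Rightarrow(3)$, and this is where I expect the main difficulty. So suppose $e$ is a distinguished archimedean matrix order unit, making $\rmA(K,z)$ a unital operator system with unit $e$, and fix an extreme point $x\in(\partial K)_n$ with $x\neq z$. Writing $c=x(e)$, a positive contraction in $\calM_n$, the goal is to prove $c=1_n$. The key lemma is that $x$ is localized to the support of $c$: letting $p$ be the support projection of $c$ and $r=1_n-p$, I claim $x(t)=p\,x(t)\,p$ for every $t\in\rmA(K,z)$. For self-adjoint $t$ the order-unit property gives $-\mu e\le t\le\mu e$ for some $\mu>0$, hence $-\mu c\le x(t)\le\mu c$ after applying the positive map $x$; for $\xi\in\ran r$ (so $c\xi=0$, which forces $\langle x(t)\xi,\xi\rangle=0$) a short quadratic-form argument, expanding $\langle(\mu c-x(t))(\xi+\lambda\eta),\xi+\lambda\eta\rangle\ge0$ with $\eta=x(t)\xi$ and letting $\lambda\to0^+$ through positive reals, forces $\eta=0$. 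This yields $x(t)r=0$, and by self-adjointness also $rx(t)=0$, giving the claim for self-adjoint $t$ and then for all $t$ by linearity.

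With this lemma in hand I would finish by exhibiting a nontrivial nc convex decomposition of $x$ involving $z$. Let $d=1_n-c$ and let $c^{-1/2}$ denote the generalized inverse of $c^{1/2}$. The map $x_1=c^{-1/2}x(\cdot)c^{-1/2}$ is completely positive with $x_1(e)=p$, hence completely contractive, so $x_1\in K_n$; and the localization lemma gives $c^{1/2}x_1c^{1/2}=p\,x\,p=x$. Taking $\alpha_1=c^{1/2}$, $\alpha_2=d^{1/2}$, $x_2=z^{(n)}$ (the direct sum of $n$ copies of $z$), we obtain the nc convex combination
\[
x=\alpha_1^* x_1\alpha_1+\alpha_2^* x_2\alpha_2,\qquad \alpha_1^*\alpha_1+\alpha_2^*\alpha_2=c+d=1_n,
\]
where the second term vanishes because $z^{(n)}$ is the zero map. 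If $c\neq 1_n$ then $d\neq0$, so $\alpha_2\neq0$, and extremality of $x$ would force $\alpha_2$ to be a scalar multiple of an isometry $\beta_2$ with $\beta_2^* z^{(n)}\beta_2=x$; but the left side is the zero map, so $x=z^{(n)}$, contradicting that $x$ is extreme with $x\neq z$ (the zero map into $\calM_n$ is reducible for $n\ge2$ and equals $z$ for $n=1$). Hence $c=1_n$, which is exactly (3). The main obstacle is the localization lemma: it is the step that converts the purely order-theoretic hypothesis that $e$ is an order unit into the geometric statement needed to build the decomposition, and getting the quadratic-form estimate to yield $x(t)\xi=0$ rather than merely $\langle x(t)\xi,\xi\rangle=0$ is the delicate point.
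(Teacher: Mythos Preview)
Your proof is correct and takes a genuinely different route from the paper for the substantive implication.

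The paper runs the cycle $(1)\Rightarrow(2)\Rightarrow(3)\Rightarrow(1)$. Its key step $(1)\Rightarrow(2)$ invokes the Choi--Effros representation theorem to produce a point $y\in K_n$ that is a unital complete isometry on $\rmA(K,z)$ with $e(y)=1_n$; since $\ker\delta_y$ is then contained in the boundary ideal $I_{(\sb,z)}$, the canonical embedding factors through $y$ and takes $e$ to the identity. The remaining implications are handled quickly via Proposition~\ref{prop:factors-through}, just as in your $(3)\Rightarrow(2)$ and $(2)\Rightarrow(1)$.

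Your $(1)\Rightarrow(3)$ is instead a direct nc-convexity argument: the localization lemma converts the order-unit hypothesis into the statement that $x$ lives over the support of $x(e)$, and you then exhibit an nc convex combination expressing $x$ in terms of $z^{(n)}$ and a compressed point, forcing $x(e)=1_n$ by extremality. This avoids the external appeal to Choi--Effros and stays entirely within the geometry of $K$, at the cost of a longer argument. One minor comment: your quadratic-form argument works as written, but since $A=\mu c-x(t)\ge0$ and $\langle A\xi,\xi\rangle=0$ already imply $A^{1/2}\xi=0$ and hence $A\xi=0$ directly, you can shortcut to $x(t)\xi=\mu c\xi=0$ without the $\lambda\to0^+$ step.
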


\begin{proof}
(1) $\Rightarrow$ (2)
Suppose that $e$ is a distinguished archimedean matrix order unit for $\rmA(K,z)$. Then $\rmA(K,z)$ is a unital operator system, so it follows from \cite{CE1977}*{Theorem 4.4} that there is $y \in K_n$ such that $y$ is a unital complete isometry on $\rmA(K,z)$ with $e(y) = 1_n$. Then $\ker \delta_y$ is contained in the boundary ideal $I_{\sb,z}$ from Remark \ref{rem:generalized-boundary-ideal}. It follows that the canonical embedding of $\rmA(K,z)$ into $\rmC(\sb,z)$ factors through $y$, and hence maps $e$ to the identity.

(2) $\Rightarrow$ (3)
Suppose that the image of $e$ under the canonical embedding of $\rmA(K,z)$ into $\rmC(\sb,z)$ is the identity. Proposition \ref{prop:factors-through} implies that the restriction to $\rmA(K,z)$ of every nc quasistate in $\partial K \setminus \{z\}$ factors through $\rmC(\sb,z)$. It follows that for $x \in (\partial K \setminus \{z\})_n$, $e(x) = 1$. 

(3) $\Rightarrow$ (1)
Suppose that for every $n$ and every $x \in (\partial K \setminus \{z\})_n$, $e(x) = 1_n$. Then it follows from Proposition \ref{prop:factors-through} that the image of $e$ under the canonical embedding of $\rmA(K,z)$ into $\rmC(\sb,z)$ is the identity. It follows that $e$ is a distinguished archimedean matrix order unit for $\rmC(\sb,z)$, and hence also for $\rmA(K,z)$.
\end{proof}

\begin{corollary} \label{cor:characterization-unital}
Let $S$ be an operator system with nc quasistate space $(K,z)$. The following are equivalent:
\begin{enumerate}
\item The operator system $S$ is unital. 
\item There is $e \in S$ such that for every $n$ and every $x \in (\partial K \setminus \{z\})_n$, $e(x) = 1_n$
\end{enumerate}
\end{corollary}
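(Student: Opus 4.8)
The plan is to deduce this corollary directly from Theorem \ref{thm:characterization-unit} together with the duality established in Theorem \ref{thm:operator-system-isomorphism}, so the real content is carried entirely by the earlier theorem and the corollary merely repackages it through the dual equivalence. First I would use Theorem \ref{thm:operator-system-isomorphism} to identify $S$ with the operator system $\rmA(K,z)$ of pointed continuous affine nc functions on $K$. Under this identification an element $e \in S$ becomes a function $e \in \rmA(K,z)$, and for $x \in K_n$, viewed as an nc quasistate, the evaluation $e(x) \in \calM_n$ is exactly $x(e)$, so the expression ``$e(x) = 1_n$'' in condition (2) of the corollary makes literal sense and matches the function-evaluation language of Theorem \ref{thm:characterization-unit}.

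Next I would unwind the meaning of the two conditions under this dictionary. By definition, saying that $S$ is unital means precisely that $S = \rmA(K,z)$ admits a distinguished archimedean matrix order unit; that is, there exists $e \in \rmA(K,z)$ satisfying condition (1) of Theorem \ref{thm:characterization-unit}. Likewise, condition (2) of the corollary is exactly the assertion that there exists $e \in \rmA(K,z)$ satisfying condition (3) of that theorem. Thus the corollary is nothing more than the existentially quantified form of the equivalence (1) $\Leftrightarrow$ (3) appearing in Theorem \ref{thm:characterization-unit}.

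To finish I would simply invoke that equivalence in both directions. If $S$ is unital, a distinguished archimedean matrix order unit $e \in \rmA(K,z)$ exists, and by the implication (1) $\Rightarrow$ (3) this same $e$ satisfies $e(x) = 1_n$ for every $n$ and every $x \in (\partial K \setminus \{z\})_n$, giving condition (2). Conversely, given any $e$ with this boundary property, the implication (3) $\Rightarrow$ (1) shows $e$ is a distinguished archimedean matrix order unit for $\rmA(K,z)$, so $S$ is unital. There is essentially no obstacle beyond checking that the existential quantifier on $e$ lines up on both sides, which it does since the theorem's equivalence holds for each fixed $e$; I would state this cleanly rather than belabor it.
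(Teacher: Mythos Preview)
Your proposal is correct and matches the paper's approach: the paper states the corollary with no explicit proof, treating it as an immediate consequence of Theorem~\ref{thm:characterization-unit} after identifying $S$ with $\rmA(K,z)$ via Theorem~\ref{thm:operator-system-isomorphism}. Your write-up simply makes this implicit reasoning explicit, correctly noting that the corollary is the existentially quantified form of the equivalence $(1)\Leftrightarrow(3)$ in that theorem.
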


The next result is \cite{CS2020}*{Theorem 2.25 (ii)}. 

\begin{corollary}
Let $S$ be a unital operator system. Then the minimal unital C*-cover of $S$ and the minimal C*-cover of $S$ coincide.
\end{corollary}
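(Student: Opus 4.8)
The plan is to show that, when $S$ is unital, the minimal C*-cover $\rmC(\sb,z)$ of $S$ is itself a minimal unital C*-cover, and then to invoke the uniqueness of the minimal unital C*-cover (Hamana \cite{Ham1979}). Throughout I identify $S$ with $\rmA(K,z)$, where $(K,z)$ is its nc quasistate space, write $e = 1_S$ for the unit, and write $\iota : \rmA(K,z) \to \rmC(\sb,z)$ for the canonical embedding into the minimal C*-cover.

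The first step is to record that the minimal C*-cover is unital and that $\iota$ is a unital map. Since $S$ is unital, $e$ is a distinguished archimedean matrix order unit for $\rmA(K,z)$, so condition (1) of Theorem \ref{thm:characterization-unit} holds; the implication (1) $\Rightarrow$ (2) then says precisely that $\iota(e)$ is the identity of $\rmC(\sb,z)$. Hence $\rmC(\sb,z)$ is unital and $\iota$, being a unital completely isometric complete order embedding between unital operator systems, is an embedding in the unital sense. As $\iota(S)$ generates $\rmC(\sb,z)$ by construction, $(\rmC(\sb,z),\iota)$ is a \emph{unital} C*-cover of $S$. It is worth noting that the unitality of $\iota$, which might otherwise look like the crux, is handed to us directly by Theorem \ref{thm:characterization-unit}.

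Next I would verify that $(\rmC(\sb,z),\iota)$ has the universal property of a minimal unital C*-cover, so let $(B,\phi)$ be an arbitrary unital C*-cover of $S$. The one step that needs genuine input is that $(B,\phi)$ is automatically a C*-cover in the generalized sense, i.e.\ that $\phi$ is an embedding of operator systems. I would establish this by showing that the dual map $\phi^d$ on nc quasistate spaces is surjective and appealing to the embedding criterion (the corollary following Lemma \ref{lem:double-dual-map}). Surjectivity is where Arveson's extension theorem enters: given a quasistate $\chi \in \ccp(S,\calM_n)$, transport it to the operator system $\phi(S) \subseteq B$, which contains $1_B = \phi(e)$, and extend it to a completely positive map $\Phi : B \to \calM_n$; since $\phi$ is unital, $\Phi(1_B) = \chi(e) \le 1_n$, so $\Phi$ is again a quasistate and restricts to $\chi$. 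This identifies every unital C*-cover of $S$ with a generalized C*-cover, and I expect this identification to be the main obstacle, as it is the only point where the distinction between the two notions of cover must be reconciled.

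Finally I would invoke minimality of $\rmC(\sb,z)$ among generalized C*-covers: there is a surjective $*$-homomorphism $\pi : B \to \rmC(\sb,z)$ with $\pi \circ \phi = \iota$. Since $\phi(e) = 1_B$ and $\iota(e) = 1_{\rmC(\sb,z)}$, we get $\pi(1_B) = 1_{\rmC(\sb,z)}$, so $\pi$ is unital. Thus $(\rmC(\sb,z),\iota)$ satisfies the defining property of the minimal unital C*-cover, and by uniqueness it \emph{is} the minimal unital C*-cover of $S$. As it is also the minimal C*-cover by definition, the two coincide.
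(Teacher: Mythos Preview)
Your argument is correct. Both your proof and the paper's begin by invoking Theorem~\ref{thm:characterization-unit} to see that the minimal C*-cover $\rmC(\sb,z)$ is a unital C*-algebra with $\iota(e)$ as its identity, so that $(\rmC(\sb,z),\iota)$ is a unital C*-cover. From there the strategies diverge. The paper argues symmetrically: it uses the universal property of the minimal \emph{unital} C*-cover $A$ (applied to the unital cover $\rmC(\sb,z)$) to get a map one way, and the universal property of the minimal \emph{generalized} C*-cover $\rmC(\sb,z)$ (applied to $A$) to get a map the other way, concluding that the two are mutually inverse. You instead verify directly that $\rmC(\sb,z)$ satisfies the defining universal property of the minimal unital C*-cover, and then appeal to Hamana's uniqueness.

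What your route buys is that it makes explicit a step the paper's proof also needs but leaves tacit: to apply the universal property of the minimal generalized C*-cover to the minimal unital cover $A$, one must know that a unital C*-cover is automatically a C*-cover in the generalized sense (i.e.\ that the unitization of the embedding remains a complete order embedding). You handle this cleanly for an arbitrary unital C*-cover via Arveson extension and the surjectivity criterion following Lemma~\ref{lem:double-dual-map}, which is a nice self-contained justification. The paper's ``back-and-forth'' argument is shorter on the page but relies on the same fact for the specific cover $A$.
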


\begin{proof}
Let $(A,\iota)$ and $(B,\kappa)$ denote the minimal unital C*-cover of $S$ and the minimal C*-cover of $S$ respectively. It follows from Theorem \ref{thm:characterization-unit} and Proposition \ref{prop:factors-through} that $B$ is unital. Hence by the universal property of $A$, there is a surjective *-homomorphism $\pi : A \to B$ such that $\pi \circ \iota = \kappa$. On the other hand, by the universal property of $B$, there is a surjective *-homomorphism $\sigma : B \to A$ such that $\sigma \circ \kappa = \iota$. Hence $A$ and $B$ are isomorphic. 
\end{proof}

%%%
\section{Quotients of operator systems} \label{sec:quotients-operator-systems}
%%%

In this section we will utilize the dual equivalence between the category of operator systems and the category of pointed compact nc convex sets to develop a theory of quotients for operator systems. We will show that the theory developed here extends the theory of quotients for unital operator systems developed by Kavruk, Paulsen, Todorov and Tomforde \cite{KPTT2013}. We note that the theory of quotients for unital operator systems can be developed in a similar way using the dual equivalence between the category of unital operator systems and the category of compact nc convex sets from \cite{DK2019}*{Section 3}.

\begin{definition} \label{defn:kernel}
Let $S$ be an operator system and let $(K,z)$ denote the nc quasistate space of $S$. We will say that a subset $J \subseteq S$ is a {\em kernel} if there is an nc quasistate $x \in K$ such that $J = \ker x$.
\end{definition}

\begin{remark}
For $x \in K_n$, the closure of the image $x(S) \subseteq \calM_n$ is an operator system. Hence $J$ is a kernel if and only if there is an operator system $T$ and a completely contractive and completely positive map $\phi : S \to T$ with $\ker \phi = J$. 
\end{remark}

Let $S$ be an operator system and let $(K,z)$ denote the nc quasistate space of $S$. For a subset $Q \subseteq S$, the {\em annihilator} of $Q$ is $Q^\perp = \{x \in K : a(x) = 0 \text{ for all } a \in Q \}$. Note that $Q^\perp$ is a closed nc convex set. Similarly, for a subset $X \subseteq K$, the {\em annihilator} of $X$ is $X^\perp = \{a \in S : a(x) = 0 \text{ for all } x \in X \}$. 

The next result is a noncommutative analogue of \cite{Alf1971}*{II.5.3}.

\begin{lemma}
Let $K$ be a compact nc convex set and let $X \subseteq K$ be a subset. Then $X^{\perp \perp} = Y \cap K$, where $Y \subseteq \calM(\rmA(K)^{*})$ denotes the closed nc convex hull generated by $\sqcup \operatorname{span} X_n$, where $\operatorname{span} X_n$ is taken in $\calM_n(\rmA(K)^{*})$.
\end{lemma}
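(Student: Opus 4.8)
The plan is to obtain the statement from an nc bipolar theorem, in direct analogy with the classical argument behind \cite{Alf1971}*{II.5.3}, with the nc separation theorem \cite{DK2019}*{Theorem 2.4.1} playing the role of Hahn--Banach. Throughout I identify a point $w \in \calM_n(\rmA(K)^*)$ with the weak*-continuous functional $a \mapsto \langle a, w\rangle$ on $\rmA(K)$, which for $w \in K_n$ is just $a \mapsto a(w)$, and for a subset $W \subseteq \calM(\rmA(K)^*)$ I write $W^\perp = \{a \in \rmA(K) : \langle a, w\rangle = 0 \text{ for all } w \in W\}$ and $(W^\perp)^\perp = \{w \in \calM(\rmA(K)^*) : \langle a, w\rangle = 0 \text{ for all } a \in W^\perp\}$. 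The first reduction is to replace $X$ by $Y$ in the annihilators. For fixed $a \in \rmA(K)$ the map $w \mapsto \langle a, w\rangle$ is weak*-continuous and, since $a$ is affine nc, it is graded and respects direct sums and compressions; hence its zero set is a weak*-closed nc convex subset of $\calM(\rmA(K)^*)$ whose graded components are linear subspaces. Consequently $a$ vanishes on $X$ iff it vanishes on $\sqcup_n \operatorname{span} X_n$ iff it vanishes on the weak*-closed nc convex hull $Y$, so $X^\perp = Y^\perp$, and therefore $X^{\perp\perp} = (Y^\perp)^\perp \cap K$ directly from the definition of the second annihilator.

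Next I would record that $Y$ is not merely nc convex but a weak*-closed graded nc \emph{subspace}. Each generating component $\operatorname{span} X_n$ is invariant under scalar multiplication, and scalar multiplication commutes with nc convex combinations of the generators, so $Y$ is closed under scaling; together with nc convexity (which yields closure under the averages $\tfrac12 w' + \tfrac12 w''$) this makes each $Y_n$ a linear subspace of $\calM_n(\rmA(K)^*)$.

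The heart of the argument is the nc bipolar identity $(Y^\perp)^\perp = Y$. The inclusion $Y \subseteq (Y^\perp)^\perp$ is immediate. For the reverse, suppose $w \in \calM_n(\rmA(K)^*) \setminus Y_n$. As $Y$ is weak*-closed and nc convex, the nc separation theorem furnishes a self-adjoint $a \in \calM_n(\rmA(K))$ with $\langle a, u\rangle \le 1_n \otimes 1_p$ for all $u \in Y_p$ and all $p$, but $\langle a, w\rangle \not\le 1_n \otimes 1_n$. Since each $Y_p$ is a subspace, $tu \in Y$ for every $u \in Y$ and every real $t$, and $t \langle a, u\rangle \le 1$ for all $t \in \R$ forces the self-adjoint matrix $\langle a, u\rangle$ to vanish; thus $\langle a, u\rangle = 0$ for all $u \in Y$, so every entry of $a$ lies in $Y^\perp$. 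On the other hand $\langle a, w\rangle \ne 0$, so some entry of $a$ pairs nontrivially with $w$, witnessing $w \notin (Y^\perp)^\perp$. Combining with the first paragraph gives $X^{\perp\perp} = (Y^\perp)^\perp \cap K = Y \cap K$.

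The main obstacle is exactly this bipolar step: converting the one-sided separating inequality produced by \cite{DK2019}*{Theorem 2.4.1} into genuine annihilation of $Y$. This is where the subspace structure of $Y$ from the second paragraph is indispensable, since it is what self-improves the inequality to vanishing; it is also where one must be careful that the separating object is weak*-continuous, so that it really is (a matrix over) the predual $\rmA(K)$ rather than living in $\rmA(K)^{**}$, and that passing from the matrix-level separator to its scalar entries keeps us inside $Y^\perp$ as defined. The remaining verifications---that $Y$ is a subspace, and that the various identifications of points of $K$ with nc states and of elements of $\rmA(K)$ with weak*-continuous nc functions on the dual are compatible---are routine.
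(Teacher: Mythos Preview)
Your proposal is correct and follows essentially the same route as the paper: both arguments use the nc separation theorem \cite{DK2019}*{Theorem 2.4.1} to separate a point outside $Y$ from $Y$, then exploit that each $Y_p$ is a linear subspace to upgrade the one-sided inequality to vanishing, concluding that the matrix entries of the separating element lie in $X^\perp$ and so annihilate any point of $X^{\perp\perp}$. The only cosmetic difference is that you package the argument as an nc bipolar identity $(Y^\perp)^\perp = Y$ and then intersect with $K$, whereas the paper runs the contradiction directly against a point $z \in X^{\perp\perp} \setminus Y$; the substance is identical, and your extra care in justifying that $Y$ is a graded subspace and that the separating element lands in $\calM_n(\rmA(K))$ is welcome.
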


\begin{proof}
It is clear that $Y \cap K \subseteq X^{\perp \perp}$. For the other inclusion, suppose for the sake of contradiction there is $z \in (X^{\perp \perp})_n \setminus (Y \cap K)$. Then $z \notin Y$. Hence by the nc separation theorem \cite{DK2019}*{2.4.1}, there is a self-adjoint element $a \in \rmA(K)$ satisfying $a(z) \not \leq 1_n \otimes 1_n$ but $a(y) \leq 1_n \otimes 1_p$ for all $y \in Y_p$. Since each $Y_p$ is a subspace, this forces $a(y) = 0$ for all $y \in Y_p$. Hence viewing $a$ as an $n \times n$ matrix $a = (a_{ij})$ over $\rmA(K)$, $a_{ij}(y) = 0$ for all $y \in Y$. In particular, $a_{ij}(x) = 0$ for all $x \in X$. Hence $a_{ij} \in X^{\perp}$ for all $i,j$. Since $z \in X^{\perp \perp}$, it follows that $a_{ij}(z) = 0$ for all $i,j$. Therefore, $a(z) = 0$, giving a contradiction.
\end{proof}

\begin{proposition} \label{prop:kernel-restriction-map}
Let $(K,z)$ be a pointed compact nc convex set. A subset $J \subseteq \rmA(K,z)$ is a kernel if and only if $J = J^{\perp \perp}$. If $J$ is a kernel and $M=J^\perp$, then the completely contractive completely positive restriction map $\rmA(K,z) \to \rmA(M,z)$ has kernel $J$. Moreover, $z \in M$ and the pair $(M,z)$ is a pointed compact nc convex set.
\end{proposition}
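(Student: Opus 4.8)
The plan is to verify the three assertions in turn, using throughout that $z$ lies in every annihilator (each $a \in \rmA(K,z)$ vanishes at $z$) and that $(K,z)$ is already known to be pointed. For the stated equivalence, the inclusion $J \subseteq J^{\perp\perp}$ is automatic. If $J = \ker x$ with $x \in K_n$, then $x \in J^\perp$, because every $a \in \ker x$ satisfies $a(x) = 0$; hence any $a \in J^{\perp\perp}$ vanishes at $x$, so $a \in \ker x = J$ and $J^{\perp\perp} \subseteq J$. Conversely, given $J = J^{\perp\perp}$, set $M = J^\perp$ and take
\[
x = \bigoplus_{y \in M} y.
\]
As the quasistates in $M$ are completely contractive, hence uniformly bounded, and $K$ is closed under direct sums, $x \in K$; since affine nc functions respect direct sums, $a(x) = \bigoplus_{y} a(y)$, so $\ker x = M^\perp = J^{\perp\perp} = J$ and $J$ is a kernel.

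For the second assertion, the restriction map $r \colon \rmA(K,z) \to \rmA(M,z)$, $r(a) = a|_M$, is well defined and completely contractive and completely positive: $a|_M$ is again a continuous affine nc function, and $a|_M(z) = a(z) = 0$ since $z \in M$. Its kernel is $\{a : a|_M = 0\} = M^\perp = J^{\perp\perp} = J$, the last equality being the first assertion. The same observations show that $z \in M = J^\perp$ and that $M$, as a closed nc convex subset of the compact set $K$, is itself a compact nc convex set.

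To see that $(M,z)$ is pointed, I would show that every nc quasistate $\psi \colon \rmA(M,z) \to \calM_n$ is evaluation at a point of $M$. Pulling $\psi$ back along $r$ yields the nc quasistate $\psi \circ r$ on $\rmA(K,z)$, which, since $(K,z)$ is pointed, is evaluation at some $p \in K_n$. Because $\psi \circ r$ annihilates $\ker r = J$, the point $p$ satisfies $a(p) = 0$ for all $a \in J$, so $p \in J^\perp = M$. For every $a \in \rmA(K,z)$ we then have $\psi(r(a)) = a(p) = (r(a))(p)$, so $\psi$ coincides with evaluation at $p$ on $\ran r$.

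The main obstacle is therefore the surjectivity of $r$: once $r$ is onto, the last identity forces $\psi$ to equal evaluation at $p$ on all of $\rmA(M,z)$, and $(M,z)$ is pointed. Surjectivity of $r$ is precisely the statement that every continuous affine nc function on the closed nc convex subset $M$ extends to one on $K$ --- the nc analogue of the Hahn--Banach extension of affine functions from a closed convex subset of a compact convex set --- and I expect it to follow from the nc separation theorem \cite{DK2019}*{Theorem 2.4.1}. A secondary technical point is that the direct sum defining $x$ ranges over the possibly large set $M$; this is harmless because the cardinal bounding $K$ is chosen large enough for $K$ to be closed under such direct sums.
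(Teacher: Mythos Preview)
Your argument is essentially correct and follows the same overall structure as the paper's proof; the pointedness argument in particular is identical. Two remarks on the differences:

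For the implication ``$J$ a kernel $\Rightarrow J = J^{\perp\perp}$'', your one-line observation that $x \in J^\perp$ (hence $J^{\perp\perp} \subseteq \ker x = J$) is more direct than the paper's route, which passes through the nc quasistate space $(L,w)$ of the image $\overline{x(\rmA(K,z))}$ and the dual map $\psi \colon L \to K$ to reach the same conclusion.

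For the converse, the paper avoids your direct-sum construction: it simply notes that the restriction map $r \colon \rmA(K,z) \to \rmA(M,z)$ is completely contractive completely positive with $\ker r = M^\perp = J$, which by the remark following Definition~\ref{defn:kernel} already exhibits $J$ as a kernel. This sidesteps the cardinality issue you flag, which is not quite as innocuous as you suggest: the bound $\kappa$ on $K$ is fixed in advance, and the direct sum over all of $M$ may exceed it. A repair would be to index over one witness $y_a \in M$ for each $a \notin J$, giving a family of size at most $|\rmA(K,z)|$.

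Your identification of surjectivity of $r$ as the main obstacle is accurate; the paper's proof also stops at ``$x \in M$'' and implicitly uses that $r$ has dense range to conclude $\theta = \mathrm{ev}_x$ on all of $\rmA(M,z)$. This extension property---that continuous affine nc functions on a closed nc convex subset $M \subseteq K$ extend to $K$---follows from the description of $\rmA(K)$ in \cite{DK2019} (affine nc functions on $K \subseteq \calM(E)$ arise from the predual $E_*$ together with constants, and these restrict and extend freely between $M$ and $K$), so your expectation is correct.
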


\begin{proof}
Suppose that $J = J^{\perp \perp}$. Let $M = J^\perp$. Then $J = M^\perp$. Let $r : \rmA(K,z) \to \rmA(M,z)$ denote the restriction map. Then $r$ is completely contractive and completely positive and $\ker r = M^\perp = J$. Hence $J$ is a kernel.

Conversely, suppose that $J$ is a kernel. It is clear that $J \subseteq J^{\perp \perp}$. For the other inclusion, choose $x \in K$ such that $J = \ker x$. Let $T$ denote the closure of the image $\rmA(K,z)(x) \subseteq \calM_n$. Then $T$ is an operator system. Letting $(L,w)$ denote the nc quasistate space of $T$, we can identify $T$ with $\rmA(L,w)$. Let $\psi : L \to K$ denote the continuous affine map obtained by applying Theorem \ref{thm:categorical-equivalence} to $x$. Then for $a \in J$ and $y \in L$, $0 = a(x)(y) = a(\psi(y))$. Hence $\psi(L) \subseteq J^\perp$, so for $a \in J^{\perp \perp}$ and $y \in L$, $0 = a(\psi(y)) = a(x)(y)$, i.e.\ $a(x) = 0$. Hence $J^{\perp \perp} \subseteq J$, so $J = J^{\perp \perp}$. 

If $J$ is a kernel and $M = J^\perp$, then clearly $z \in M$. To see that $(M,z)$ is a pointed compact nc convex set, let $\theta : \rmA(M,z) \to \calM_n$ be an nc quasistate. Let $r : \rmA(K,z) \to \rmA(M,z)$ denote the restriction map from above. Then the composition $\theta \circ r$ is an nc quasistate on $\rmA(K,z)$. Since $(K,z)$ is a pointed compact nc convex set, by definition there is $x \in K$ such that $\theta \circ r = x$. Since $x$ factors through $r$, $x \in J^\perp = M$. 
\end{proof}

\begin{definition} \label{defn:quotient}
Let $S$ be an operator system and let $(K,z)$ denote the nc quasistate space of $S$. For a kernel $J \subseteq S$, we let $S/J$ denote the operator system $\rmA(M,z)$, where $M = J^\perp$. We will refer to $S/J$ as the {\em quotient} of $S$ by $J$, and we will refer to the restriction map $S \to \rmA(M,z)$ obtained by identifying $S$ with $\rmA(K,z)$ as the {\em canonical quotient map}.
\end{definition}

\begin{remark}
Note that we have applied Theorem \ref{thm:duality-theorem} to identify $S$ with $\rmA(K,z)$. It is clear that the canonical quotient map $\rmA(K,z) \to \rmA(M,z)$ is completely contractive and completely positive.
\end{remark}

The next result characterizes operator system quotients in terms of a natural universal property. It is an analogue of \cite{KPTT2013}*{Proposition 3.6}. 

\begin{theorem} \label{thm:universal-characterization-quotients}
Let $S$ be an operator system and let $J \subseteq S$ be a kernel. The quotient $S/J$ is the unique operator system up to isomorphism satisfying the following universal property: there is a completely contractive and completely positive map $\phi : S \to S/J$, and whenever $T$ is an operator system and $\psi : S \to T$ is a completely contractive and completely positive map with $J \subseteq \ker \psi$, then $\psi$ factors through $\phi$. In other words, there is a completely contractive and completely positive map $\omega : S/J \to T$ such that $\psi = \omega \circ \phi$.
% https://q.uiver.app/?q=WzAsNSxbMSwwLCJTIl0sWzIsMCwiUy9KIl0sWzIsMSwiVCJdLFswLDAsIkoiXSxbMywxXSxbMywwLCIiLDAseyJzdHlsZSI6eyJ0YWlsIjp7Im5hbWUiOiJob29rIiwic2lkZSI6InRvcCJ9fX1dLFswLDEsIlxccGhpIl0sWzEsMiwiXFxvbWVnYSIsMCx7InN0eWxlIjp7ImhlYWQiOnsibmFtZSI6ImVwaSJ9fX1dLFswLDIsIlxccHNpIiwyXV0=
\[\begin{tikzcd}
	{J} & {S} & {S/J} \\
	&& {T} & {}
	\arrow[from=1-1, to=1-2, hook]
	\arrow["{\phi}", from=1-2, to=1-3]
	\arrow["{\omega}", from=1-3, to=2-3]
	\arrow["{\psi}"', from=1-2, to=2-3]
\end{tikzcd}\]
\end{theorem}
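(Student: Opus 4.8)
The plan is to push the whole statement through the contravariant equivalence of Theorem~\ref{thm:categorical-equivalence}, so that the universal property becomes an essentially formal property of an inclusion of pointed compact nc convex sets. First I would fix notation using Proposition~\ref{prop:kernel-restriction-map} and Definition~\ref{defn:quotient}: identify $S = \rmA(K,z)$ with $(K,z)$ its nc quasistate space, set $M = J^\perp$ so that $S/J = \rmA(M,z)$, and let $\phi : \rmA(K,z) \to \rmA(M,z)$ be the restriction map. By Proposition~\ref{prop:kernel-restriction-map}, $\phi$ is completely contractive and completely positive with $\ker \phi = J$, the pair $(M,z)$ is a pointed compact nc convex set, and $z \in M$. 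The key structural observation is that the dual map $\phi^d : M \to K$ is simply the inclusion $\iota$ of $M$ into $K$, since $\phi^d(v)(a) = \phi(a)(v) = a(v)$ for $v \in M$ and $a \in \rmA(K,z)$.

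For the existence of the factorization, I would take an operator system $T$ with nc quasistate space $(L,w)$, identify $T = \rmA(L,w)$, and dualize a given completely contractive and completely positive map $\psi : S \to T$ to a pointed continuous affine nc map $\psi^d : L \to K$. The crucial dictionary entry is that the hypothesis $J \subseteq \ker \psi$ is equivalent to $\psi^d(L) \subseteq M$: for $a \in J$ and $y \in L$ one has $\psi(a)(y) = a(\psi^d(y))$, so $\psi(a) = 0$ for every $a \in J$ holds precisely when each $\psi^d(y)$ annihilates $J$, i.e.\ lies in $J^\perp = M$. Granting this, I would co-restrict $\psi^d$ to a map $\omega^d : L \to M$; it is continuous, affine, nc and pointed (the last because $\psi^d(w) = z \in M$), hence a morphism in $\PoNCConv$, and it satisfies $\iota \circ \omega^d = \psi^d$. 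Applying the functor $\PoNCConv \to \OpSys$ and using contravariance, the identity $\iota \circ \omega^d = \psi^d$ becomes $\omega \circ \phi = \psi$ for the induced completely contractive and completely positive map $\omega : \rmA(M,z) \to T$, which is exactly the desired factorization.

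For uniqueness of $\omega$ I would use that $\phi^d = \iota$ is injective, so $\omega^d$ is forced to be the co-restriction of $\psi^d$ and is therefore unique; since the duality is an equivalence, $\omega$ is unique as well. Uniqueness of the quotient up to isomorphism is then the usual formal argument: if $(Q,\phi')$ is any pair consisting of an operator system and a completely contractive and completely positive map $\phi'$ annihilating $J$ that enjoys the same factorization property, then applying each universal property to the other's structure map produces maps $\omega : S/J \to Q$ and $\omega' : Q \to S/J$ with $\omega \circ \phi = \phi'$ and $\omega' \circ \phi' = \phi$; the just-established uniqueness of factorizations forces $\omega' \circ \omega$ and $\omega \circ \omega'$ to be the respective identities, so $S/J \cong Q$.

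I expect the main obstacle to be establishing the equivalence of $J \subseteq \ker \psi$ and $\psi^d(L) \subseteq M$ cleanly, since this is the one place where the kernel condition $J = J^{\perp\perp}$ and the definition of the annihilator are genuinely used; everything after it is a formal manipulation of the contravariant equivalence. A secondary point to check carefully is that co-restricting $\psi^d$ really lands in the category $\PoNCConv$ --- that $\omega^d$ remains continuous, affine, nc and pointed --- but this is inherited directly from $\psi^d$ together with $z \in M$.
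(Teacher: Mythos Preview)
Your proposal is correct and follows essentially the same route as the paper: dualize via Theorem~\ref{thm:categorical-equivalence}, show that $J \subseteq \ker\psi$ forces $\psi^d(L) \subseteq J^\perp = M$, co-restrict and dualize back to obtain $\omega$. You are in fact slightly more careful than the paper---you prove uniqueness of the factoring map $\omega$ (which the theorem does not assert but which makes the uniqueness-of-quotient argument clean), whereas the paper's uniqueness paragraph is terser; also note that the implication you actually need, $J \subseteq \ker\psi \Rightarrow \psi^d(L) \subseteq M$, does not require $J = J^{\perp\perp}$, so your remark about where that condition is ``genuinely used'' is slightly misplaced.
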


\begin{proof}
To see that $S/J$ satisfies this universal property, first note that the canonical quotient map $\phi : S \to S/J$ is completely contractive and completely positive. Let $T$ be an operator system and let $\psi : S \to T$ be a completely contractive and completely positive map with $J \subseteq \ker \psi$. Letting $(K,z)$ and $(L,w)$ denote the nc quasistate spaces of $S$ and $T$ respectively, we can assume that $S = \rmA(K,z)$ and $T = \rmA(L,w)$. Let $\psi^d : L \to K$ denote the continuous affine nc map obtained by applying Theorem \ref{thm:duality-theorem} to $\psi$.

Let $M = J^\perp$. For $a \in J$ and $y \in L$, the fact that $J \subseteq \ker \psi$ implies that $0 = \psi(a)(y) = a(\psi^d(y))$. Hence $\psi^d(L) \subseteq J^\perp = M$. Restricting the codomain of $\psi^d$ to $M$ and applying Theorem \ref{thm:duality-theorem} to $\psi^d$, we obtain a completely contractive and completely positive map $\omega : \rmA(M,z) \to \rmA(L,w)$ such that $\omega \circ \phi = \psi$.

To see that $S/J$ is the unique operator system with this universal property, suppose that $R$ is another operator system that satisfies the property from the statement of the theorem, then there are surjective completely contractive and completely positive maps $S/J \to R$ and $R \to S/J$ such that the composition is the identity map on $S/J$. It follows that each of the individual maps must be a completely isometric complete order isomorphism. Hence $R$ is isomorphic to $S/J$.
\end{proof}

In order to relate our theory of quotients of operator systems to the theory of quotients of unital operator systems from \cite{KPTT2013}, we require the following result.

\begin{lemma} \label{cor:unital-op-sys-unit}
Let $(K,z)$ be a pointed compact nc convex set such that $\rmA(K,z)$ is a unital operator system and let $e \in \rmA(K,z)$ denote the distinguished archimedean matrix order unit. Let $J \subseteq \rmA(K,z)$ be a kernel and let $M = J^\perp$. Then for $x \in \partial M \setminus \{z\}$, $e(x) = 1$.
\end{lemma}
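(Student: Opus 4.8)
The plan is to show directly that $p := e(x) = x(e)$ equals $1_n$ for every extreme point $x \in (\partial M \setminus \{z\})_n$, exploiting the definition of extremality together with the presence of the zero quasistate $z$ as a point of $M$. Here I view $x$ as a completely contractive completely positive map $\rmA(K,z) \to \calM_n$ (using that $(M,z)$ is a pointed compact nc convex set, so points are quasistates). The starting observation is that $0 \le p \le 1_n$, since $e \ge 0$ with $\|e\| = 1$ and $x$ is completely contractive and positive. The whole argument then hinges on relating the value $p$ to a nontrivial nc convex decomposition of $x$ inside $M$, which extremality must collapse.

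First I would record the null-space fact for the completely positive map $x$: if $\xi \in \ker p$ then $x(a)\xi = 0$ for every $a \in \rmA(K,z)$. Since $\rmA(K,z)$ is unital with unit $e$, this follows from Stinespring's theorem applied to an extension of $x$ to a unital C*-cover (where $e$ maps to the identity), giving $x(a) = V^*\pi(a)V$ with $V^*V = p$; alternatively it follows from positivity of the matrix $\bigl(\begin{smallmatrix} x(e) & x(a)\\ x(a^*) & x(a^*a)\end{smallmatrix}\bigr)$. Consequently $x(a) = q^\perp x(a) q^\perp$, where $q$ is the projection onto $\ker p$, so that, writing $m = \operatorname{rank}(q^\perp)$ and compressing by the isometry $w$ onto $\operatorname{ran}(q^\perp)$, one obtains $x \simeq x' \oplus z^{(n-m)}$ with $x' = w^* x w \in M_m$ (using that $M$ is closed under compressions) and $x'(e)$ invertible.

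Next comes the case analysis driven by extremality. If $p$ is not invertible, then $q \ne 0$ and the above decomposition exhibits $x$ as a genuine nc convex combination in $M$. When $m = 0$ we get $x = z^{(n)}$, which is $z$ for $n=1$ (excluded) and reducible, hence non-extreme, for $n \ge 2$; when $0 < m < n$, the coefficient attached to the $x'$-summand is a nonzero element of $\calM_{m,n}$, which cannot be a scalar multiple of an isometry because $m < n$ forces $\beta^*\beta = 1_n$ to be impossible, again contradicting extremality. Thus $p$ must be invertible. Finally, assuming $p$ invertible with $p \ne 1_n$, I would set $\psi := p^{-1/2} x(\cdot) p^{-1/2}$; this is a unital, hence completely contractive, completely positive map, and it lies in $M_n = (J^\perp)_n$ because $x$ annihilates $J$ and conjugation by $p^{-1/2}$ preserves this. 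Then
\[
x = p^{1/2}\,\psi\, p^{1/2} + (1_n - p)^{1/2}\, z^{(n)}\, (1_n - p)^{1/2}
\]
is an nc convex combination in $M$ (the coefficients square-summing to $p + (1_n - p) = 1_n$, the second term vanishing since $z^{(n)} = 0$) with both coefficients nonzero. Extremality forces $p^{1/2}$ to be a positive scalar multiple of a unitary, hence $p = c^2 1_n$, and the accompanying condition $\psi = x$ forces $c = 1$, i.e. $p = 1_n$, which is the desired conclusion $e(x) = 1_n$.

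The main obstacle I anticipate is the bookkeeping around the precise extreme-point definition: it is the matrix-size constraint (an isometry in $\calM_{m,n}$ exists only when $n \le m$) that eliminates the partial-rank case, and one must verify that the auxiliary points $\psi$ and the compression $x'$ genuinely belong to $M = J^\perp$ rather than merely to $K$. Both points reduce to the single fact that every point produced annihilates $J$ because $x$ does.
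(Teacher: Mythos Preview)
Your proof is correct and takes a genuinely different route from the paper. The paper argues externally via the minimal C*-cover: it invokes Theorem~\ref{thm:characterization-unit} to see that $e$ becomes a central projection in $\rmC(\sb)$, takes a maximal dilation $y$ of $x$, uses the central projection to split $y = y_0 \oplus y_1$ with $e(y_0)=0$ and $e(y_1)=1$, and then pushes this splitting down to an nc convex decomposition of $x$ in $M$ which extremality collapses. Your argument stays internal to $M$ and works directly from the definition of extremality: the Stinespring/Schwarz null-space observation produces the splitting off of $z$-summands when $p=e(x)$ is singular, and the rescaling $\psi = p^{-1/2}x(\cdot)p^{-1/2}$ produces the decomposition when $p$ is invertible but not $1_n$.

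What each approach buys: yours is more self-contained and avoids the dilation machinery and the structure of $\rmC(\sb)$ entirely, relying only on the fact that $\rmA(K,z)$ is unital so that $\psi(e)=1_n$ forces $\psi$ to be completely contractive. The paper's approach, while heavier, makes visible the structural reason behind the result, namely that $e$ is a central projection in the envelope, which is also what drives Proposition~\ref{prop:quotient-unital-op-sys}. One small point worth tightening in your write-up: in the invertible case you deduce $\psi = x$ from the isometry condition $\beta_1^* \psi \beta_1 = x$ with $\beta_1 = 1_n$; you could equally (and perhaps more directly) use the second coefficient $(1_n-p)^{1/2}$, whose associated condition $\beta_2^* z^{(n)} \beta_2 = x$ forces $x = z^{(n)}$, contradicting $x(e)=p\ne 0$.
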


\begin{proof}
Let $K^0$ and $K^1$ denote the closed nc convex hulls of $\{z\}$ and $\partial K \setminus \{z\}$ respectively. By Theorem \ref{thm:characterization-unit}, $e(x) = 1$ for $x \in \partial K \setminus \{z\}$. Hence by the continuity of $e$, $e(x) = 1$ for all $x \in K^1$. Since $e(z) = 0$, in particular this implies that $K^0 \cap K^1 = \emptyset$. 

%$K^0 \not \subseteq K^1$. Since every point in $K^0$ is a direct sum of copies of $z$, it follows that $z \in \partial K^0$.

For $x \in \partial K$, either $e(x) = 1$ or $e(x) = 0$. It follows from \cite{DK2015}*{Theorem 3.4} and \cite{DK2019}*{Proposition 5.2.4} that the image of $e$ under the canonical embedding of $\rmA(K)$ into its minimal unital C*-cover $\rmC(\sb)$ is a projection in the center of $\rmC(\sb)$.

Choose $x \in K_m$ and let $y \in K_n$ be a maximal dilation of $x$. Then there is an isometry $\alpha \in \calM_{n,m}$ such that $x = \alpha^* y \alpha$. By \cite{DK2019}*{Proposition 5.2.4}, the *-homomorphism $\delta_y$ factors through $\rmC(\sb)$. Hence from above, $y$ decomposes as a direct sum $y = y_0 \oplus y_1$ for $y_0 \in K^0_{n_0}$ and $y_1 \in K^1_{n_1}$. This implies that $x$ can be written as an nc convex combination $x = \alpha_0^* y_0 \alpha_0 + \alpha_1^* y_1 \alpha_1$ for $\alpha_0 \in \calM_{n_0,m}$ and $\alpha_1 \in \calM_{n_1,m}$ satisfying $\alpha_0^* \alpha_0 + \alpha_1^* \alpha_1 = 1_m$.

Suppose $x \in (\partial M)_m$ and write $x$ can be written as an nc convex combination $x = \alpha_0^* y_0 \alpha_0 + \alpha_1^* y_1 \alpha_1$ as above. Then for $a \in J$,
\[
0 = a(x) = \alpha_0^* a(y_0) \alpha_0 + \alpha_1^* a(y_1) \alpha_1.
\]
Since $y_0$ is a direct sum of copies of $z$, $a(y_0) = 0$. Hence $\alpha_1^* a(y_1) \alpha_1 = 0$.

From above, we can decompose $y_1$ with respect to the range of $\alpha$ as
\[
y_1 = \left[\begin{matrix} u_1 & \ast \\ \ast & \ast \end{matrix}\right]
\]
for $u_1 \in M_k$, and there is $\beta_1 \in \calM_{k,m}$ such that $\beta_1^* u_1 \beta_1 = \alpha_1^* y_1 \alpha_1$ and $\alpha_0^* \alpha_0 + \beta_1^* \beta_1 = 1_m$. 

Now since $y_0,u_1 \in M$ and $x \in (\partial M)_m$, it follows that either $\alpha_0 = 0$ or $\beta_1 = 0$. Hence either $x \in K^0$ or $x \in K^1$. In the former case, $x = z$, while in the latter case, $e(x) = 1$.
\end{proof}

\begin{proposition} \label{prop:quotient-unital-op-sys}
Let $S$ be a unital operator system. Then for every kernel $J \subseteq S$, the quotient operator system $S/J$ is unital.
\end{proposition}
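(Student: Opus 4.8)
The plan is to use the concrete realization of the quotient from Definition \ref{defn:quotient}: after identifying $S$ with $\rmA(K,z)$ via Theorem \ref{thm:duality-theorem}, the quotient $S/J$ is the operator system $\rmA(M,z)$ with $M = J^\perp$, and the canonical quotient map is the restriction map $r : \rmA(K,z) \to \rmA(M,z)$. Since $S$ is unital, $\rmA(K,z)$ carries a distinguished archimedean matrix order unit $e$, and the natural candidate for a unit of the quotient is its image $r(e) = e|_M \in \rmA(M,z)$. The entire goal is then to verify that $e|_M$ really is a distinguished archimedean matrix order unit for $\rmA(M,z)$.

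First I would record that $(M,z)$ is itself a pointed compact nc convex set: this is exactly the last assertion of Proposition \ref{prop:kernel-restriction-map}, which also gives $z \in M$ and shows that $r$ is completely contractive and completely positive. Consequently Theorem \ref{thm:characterization-unit} applies verbatim to the pair $(M,z)$ and to the pointed continuous affine nc function $e|_M$, so the problem reduces to checking any one of the three equivalent conditions of that theorem for $e|_M$ relative to $(M,z)$.

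The decisive step is to establish condition (3) of Theorem \ref{thm:characterization-unit} for $(M,z)$, namely that $e(x) = 1_n$ for every $n$ and every $x \in (\partial M \setminus \{z\})_n$. This is precisely the content of Lemma \ref{cor:unital-op-sys-unit}, so no new argument is needed at this point. With condition (3) in hand, the implication (3) $\Rightarrow$ (1) of Theorem \ref{thm:characterization-unit} shows that $e|_M$ is a distinguished archimedean matrix order unit for $\rmA(M,z)$, whence $S/J = \rmA(M,z)$ is unital, as desired.

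The genuine obstacle in this line of reasoning is concentrated entirely in Lemma \ref{cor:unital-op-sys-unit}, which compares the extreme points of $M$ with those of $K$: an extreme point of $M$ need not be an extreme point of $K$, so one cannot merely invoke Theorem \ref{thm:characterization-unit} for $K$ and restrict. The lemma circumvents this by dilating $x \in \partial M$ to a maximal dilation $y \in K$, using that the image of $e$ in the minimal unital C*-cover $\rmC(\sb)$ is a central projection to split $y = y_0 \oplus y_1$ with $y_0$ a direct sum of copies of $z$ and $e(y_1) = 1$, and then exploiting the extremality of $x$ in $M$ to force either $x = z$ or $e(x) = 1$. Since this lemma is already available, the proof of the proposition itself is a short assembly: realize the quotient as $\rmA(M,z)$, invoke Proposition \ref{prop:kernel-restriction-map} to know $(M,z)$ is pointed, apply Lemma \ref{cor:unital-op-sys-unit}, and conclude via Theorem \ref{thm:characterization-unit}.
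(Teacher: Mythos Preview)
Your proposal is correct and follows essentially the same route as the paper's proof: realize $S/J$ as $\rmA(M,z)$ with $M=J^\perp$, invoke Lemma \ref{cor:unital-op-sys-unit} to verify $e(x)=1_n$ for $x\in(\partial M\setminus\{z\})_n$, and conclude via Theorem \ref{thm:characterization-unit}. Your explicit appeal to Proposition \ref{prop:kernel-restriction-map} to confirm that $(M,z)$ is pointed is a welcome point of care that the paper leaves implicit.
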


\begin{proof}
Letting $(K,z)$ denote the nc quasistate space of $S$, we can assume that $S = \rmA(K,z)$. Let $M = J^\perp$, so that $S/J = \rmA(M,z)$, and let $\phi : \rmA(K,z) \to \rmA(M,z)$ denote the canonical quotient map. Let $e \in \rmA(K,z)$ denote the distinguished archimedean matrix order unit. Then for $x \in \partial M \setminus \{z\}$, Corollary \ref{cor:unital-op-sys-unit} implies that $e(x) = 1$. Hence by Theorem \ref{thm:characterization-unit}, $\phi(e)$ is an archimedean matrix order unit.
\end{proof}

\begin{remark}
If $S$ is a unital operator system and $J$ is the kernel of a unital completely positive map, then the quotient $S/J$ from Definition \ref{defn:quotient} coincides with the definition of quotient in \cite{KPTT2013}. Indeed, the quotient $T$ of $S$ by $J$ that they consider in their paper is the unique unital operator system satisfying a universal property analogous to the property in Theorem \ref{thm:universal-characterization-quotients} for unital completely positive maps into unital operator systems. By Proposition \ref{prop:quotient-unital-op-sys}, $S/J$ is a unital operator system, it follows from Theorem \ref{thm:universal-characterization-quotients} that $S/J = T$.
\end{remark}

\begin{lemma} \label{lem:ideal-generated-by-kernel}
Let $(K,z)$ be a pointed compact nc convex set and let $J \subseteq \rmA(K,z)$ be a kernel. Let $M = J^\perp$. Then the closed two-sided ideal $I$ of $\rmC(K,z)$ generated by $J$ is $I = \{ f \in \rmC(K,z) : f|_M = 0 \}$. Hence letting $\pi : \rmC(K,z) \to \rmC(M,z)$ denote the restriction *-homomorphism, $I = \ker \pi$. 
\end{lemma}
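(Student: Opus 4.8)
The plan is to identify both ideals with the kernel of the restriction $*$-homomorphism and then compare their images under the irreducible representations of $\rmC(K,z)$. Write $I_0 = \{f \in \rmC(K,z) : f|_M = 0\}$. By Proposition \ref{prop:kernel-restriction-map} the pair $(M,z)$ is again a pointed compact nc convex set, so $\rmC(M,z)$ makes sense and the restriction of a pointed continuous nc function on $K$ to the closed nc convex set $M \ni z$ is a pointed continuous nc function on $M$. Thus $\pi : \rmC(K,z) \to \rmC(M,z)$, $f \mapsto f|_M$, is a well-defined $*$-homomorphism with $\ker\pi = I_0$, a closed two-sided ideal. The whole lemma therefore reduces to proving $I = I_0$.

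First I would dispose of the easy inclusion. By definition of $M = J^\perp$, every $a \in J$ satisfies $a(x) = 0$ for all $x \in M$, that is $a|_M = 0$, so $J \subseteq I_0$. Since $I_0$ is a closed two-sided ideal and $I$ is the closed two-sided ideal generated by $J$, this gives $I \subseteq I_0$.

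The substance is the reverse inclusion $I_0 \subseteq I$, which I would obtain representation-theoretically. Recall that every irreducible representation of $\rmC(K)$ is unitarily equivalent to $\delta_x$ for some $x \in \Irr(K)$ (Section \ref{sec:min-max-c-star-covers}). Since $\rmC(K,z)$ is a closed two-sided ideal of $\rmC(K)$ (Remark \ref{rem:pointed-functions-ideals}) with $\rmC(K,z)^\sharp = \rmC(K)$ (Proposition \ref{prop:unitizations}), the standard correspondence between the spectrum of an ideal and that of the ambient C*-algebra identifies the irreducible representations of $\rmC(K,z)$ with the restrictions $\delta_x|_{\rmC(K,z)}$ for those $x \in \Irr(K)$ with $\delta_x(\rmC(K,z)) \neq 0$. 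Now let $\rho = \delta_x|_{\rmC(K,z)}$ be such a representation with $I \subseteq \ker\rho$. Then $J \subseteq \ker\rho$, so $a(x) = \rho(a) = 0$ for every $a \in J$, which says exactly that $x \in J^\perp = M$. Consequently $\rho(f) = f(x) = 0$ for every $f \in I_0$, i.e. $I_0 \subseteq \ker\rho$. Invoking semisimplicity of the C*-algebra $\rmC(K,z)/I$ — the intersection of the kernels of its irreducible representations is zero — we get that $I$ is the intersection of the kernels of the irreducible representations of $\rmC(K,z)$ containing $I$; since each such kernel was just shown to contain $I_0$, it follows that $I_0 \subseteq I$. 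Hence $I = I_0 = \ker\pi$.

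I expect the main obstacle to be the middle step: correctly invoking the correspondence between the irreducible representations of the ideal $\rmC(K,z)$ and those of $\rmC(K)$, and thereby with points of $\Irr(K)$, so that ``$\rho$ annihilates $J$'' can be translated into ``the underlying point lies in $M = J^\perp$.'' Once that translation is in hand, membership of $I_0$ in $\ker\rho$ is automatic and the semisimplicity argument closes the proof.
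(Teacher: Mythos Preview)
Your proof is correct, but it takes a genuinely different route from the paper's. The paper obtains the nontrivial inclusion $I_0 \subseteq I$ categorically: it first checks $I \cap \rmA(K,z) = J$, so the quotient map $\rho : \rmC(K,z) \to \rmC(K,z)/I$ restricted to $\rmA(K,z)$ has kernel $J$; by the universal property of operator system quotients (Theorem~\ref{thm:universal-characterization-quotients}) this restriction factors through $\rmA(M,z)$ via a completely contractive completely positive map $\omega$, which then extends to a $*$-homomorphism $\sigma : \rmC(M,z) \to \rmC(K,z)/I$ by the universal property of $\rmC(M,z)$ as maximal C*-cover. Since $\sigma \circ \pi$ and $\rho$ agree on the generating set $\rmA(K,z)$, they agree everywhere, whence $I_0 = \ker\pi \subseteq \ker\rho = I$. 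Your argument instead exploits semisimplicity: identifying irreducible representations of the ideal $\rmC(K,z)$ with point evaluations $\delta_x$ for $x \in \Irr(K)$, you show that any such representation killing $J$ comes from a point of $M$ and hence kills $I_0$. Your approach is more elementary and self-contained, relying only on standard C*-algebra representation theory and the surjectivity of $\Irr(K) \to \Spec(\rmC(K))$, while the paper's approach stays within the categorical framework it has built and showcases the universal properties of the quotient and the maximal C*-cover.
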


\begin{proof}
Let $I' = \ker \pi$. Then $I' = \{ f \in \rmC(K,z) : f|_M = 0 \}$. By Proposition \ref{prop:kernel-restriction-map}, the restriction $\pi|_{\rmA(K,z)}$ satisfies $\ker \pi|_{\rmA(K,z)} = J$, so it is clear that $I \subseteq I'$.

For the other inclusion, first note that $J = I' \cap \rmA(K,z)$. Hence by the definition of $I$ and the fact from above that $I \subseteq I'$,
\[
J \subseteq I \cap \rmA(K,z) \subseteq I' \cap \rmA(K,z) = J,
\]
implying $J = I \cap \rmA(K,z)$. Let $\rho : \rmC(K,z) \to \rmC(K,z)/I$ denote the quotient *-homomorphism. Since the restriction $\rho|_{\rmA(K,z)}$ has kernel $J$, Theorem \ref{thm:universal-characterization-quotients} implies that $\rho|_{\rmA(K,z)}$ factors through $\rmA(M,z)$. It follows that there is a completely contractive and completely positive map $\omega : \rmA(M,z) \to \rmC(K,z)/I$ such that $\omega \circ \pi|_{\rmA(K,z)} = \rho|_{\rmA(K,z)}$.

By the universal property of $\rmC(M,z)$, $\omega$ extends to a *-homomorphism $\sigma : \rmC(M,z) \to \rmC(K,z)/I$. Hence $I' \subseteq I$, and we conclude that $I' = I$.  
\end{proof}

\begin{proposition} \label{prop:kernel-max-c-star-cover}
Let $(K,z)$ be a pointed compact nc convex set. Let $J \subseteq \rmA(K,z)$ be a subset and let $I$ denote the closed two-sided ideal of $\rmC(K,z)$ generated by $J$. Then $J$ is a kernel if and only if $I \cap \rmA(K,z) = J$.
\end{proposition}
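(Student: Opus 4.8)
The plan is to derive both directions from results already in hand: Lemma~\ref{lem:ideal-generated-by-kernel} for the forward implication, and the intrinsic description of kernels from the remark following Definition~\ref{defn:kernel} (a subset is a kernel exactly when it is the kernel of some completely contractive completely positive map into an operator system) for the reverse. Throughout I use that $\rmA(K,z) \subseteq \rmC(K,z)$, which holds since $\rmC(K,z) = \ca(\rmA(K,z))$ by Proposition~\ref{prop:unitizations}.

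First I would treat the forward direction. Assume $J$ is a kernel and set $M = J^\perp$. By Lemma~\ref{lem:ideal-generated-by-kernel}, the closed two-sided ideal $I$ generated by $J$ coincides with $\ker \pi$, where $\pi : \rmC(K,z) \to \rmC(M,z)$ is the restriction $*$-homomorphism. Restricting $\pi$ to $\rmA(K,z)$ gives precisely the completely contractive completely positive restriction map $\rmA(K,z) \to \rmA(M,z)$, which has kernel $J$ by Proposition~\ref{prop:kernel-restriction-map}. Intersecting with $\rmA(K,z)$ then yields
\[
I \cap \rmA(K,z) = \ker \pi \cap \rmA(K,z) = \ker\bigl(\pi|_{\rmA(K,z)}\bigr) = J.
\]

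For the reverse direction, assume $I \cap \rmA(K,z) = J$. Since $I$ is a closed two-sided ideal of the C*-algebra $\rmC(K,z)$, the quotient $\rmC(K,z)/I$ is a C*-algebra, and hence an operator system. Let $\rho : \rmC(K,z) \to \rmC(K,z)/I$ be the quotient $*$-homomorphism and let $\psi = \rho|_{\rmA(K,z)}$ be its restriction. As the restriction of a $*$-homomorphism, $\psi$ is completely contractive and completely positive, and
\[
\ker \psi = \{ a \in \rmA(K,z) : a \in I \} = I \cap \rmA(K,z) = J.
\]
By the remark following Definition~\ref{defn:kernel}, the existence of a completely contractive completely positive map into an operator system with kernel $J$ shows that $J$ is a kernel.

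Both implications are essentially a matter of assembling the earlier results, so I do not expect a genuine obstacle. The one step deserving care is the reverse direction: one must confirm that $\psi$ is genuinely completely contractive and completely positive (which follows because these properties of $\rho$ pass to its restriction to the subsystem $\rmA(K,z)$), and, more importantly, recognize that the reformulation of ``kernel'' in the remark after Definition~\ref{defn:kernel}---allowing an arbitrary operator system as the target rather than only some $\calM_n$---is exactly what reduces the statement to exhibiting the single map $\psi$.
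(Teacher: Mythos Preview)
Your proof is correct and follows essentially the same route as the paper. Both directions invoke the same ingredients: the forward direction combines Lemma~\ref{lem:ideal-generated-by-kernel} with Proposition~\ref{prop:kernel-restriction-map} to identify $I \cap \rmA(K,z)$ with $J$, and the reverse direction exhibits $J$ as the kernel of the restriction to $\rmA(K,z)$ of the quotient $*$-homomorphism $\rmC(K,z) \to \rmC(K,z)/I$.
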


\begin{proof}
If $J$ is a kernel, then letting $M = J^\perp$, Proposition \ref{prop:kernel-restriction-map} and Lemma \ref{lem:ideal-generated-by-kernel} imply that $I \cap \rmA(K,z) = \{ a \in \rmA(K,z) : a|_M = 0 \} = M^\perp = J$. Conversely, if $I \cap \rmA(K,z) = J$, then letting $\pi : \rmC(K,z) \to \rmC(K,z)/I$ denote the quotient *-homomorphism, $J = \ker \pi|_{\rmA(K,z)}$. Hence $J$ is a kernel. 
\end{proof}

%%%
\section{C*-simplicity} \label{sec:c-star-simplicity}
%%%

In this section we will establish a characterization of operator systems with the property that their minimal C*-cover (i.e.\ their C*-envelope) is simple. The characterization will be in terms of the nc quasistate space of an operator system.

\begin{definition}
We will say that an operator system $S$ is {\em C*-simple} if its minimal C*-cover $\camin(S)$ is simple. 
\end{definition}

We will require the {\em spectral topology} on the irreducible points in a compact nc convex set from Section \ref{sec:minimal-maximal-unital-c-star-covers}), which was introduced in \cite{KS2019}*{Section 9}. Recall that for a compact nc convex set $K$, the spectral toplogy on the set $\Irr(K)$ of irreducible points in $K$ is defined in terms of the hull-kernel topology on the spectrum of the C*-algebra $\rmC(K)$. 

By Proposition \ref{prop:factors-through}, letting $y = \oplus_{x \in \partial K \setminus \{z\}} x$, the kernel of the *-homomorphism $\delta_y$ on $\rmC(K,z)$ is the boundary ideal $I_{(\sb,z)}$ from Theorem \ref{thm:min-max-c-star-algebras}. In particular, the quotient $\rmC(K,z) / I_{(\sb,z)}$ is isomorphic to the minimal C*-cover $\rmC(\sb,z)$ of $\rmA(K,z)$. The proof of the following result now follows exactly as in the proof of \cite{KS2019}*{Proposition 9.4}.

\begin{proposition} \label{prop:spectral-closure-factors-through}
Let $(K,z)$ be a pointed compact nc convex set. A point $x \in \Irr(K)$ belongs to the closure of $\partial K \setminus \{z\}$ with respect to the spectral topology if and only if the corresponding representation $\delta_x : \rmC(K,z) \to \calM_n$ factors through the minimal C*-cover $\rmC(\sb,z)$ of $\rmA(K,z)$.
\end{proposition}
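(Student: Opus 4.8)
The plan is to transcribe the three-step argument of \cite{KS2019}*{Proposition 9.4} into the pointed setting, the only genuinely new ingredient being the passage between the spectrum of the unital algebra $\rmC(K)$ (which defines the spectral topology) and the spectrum of the ideal $\rmC(K,z)$ (through which the representations factor). It suffices to treat those $x \in \Irr(K)$ for which $\delta_x|_{\rmC(K,z)}$ is nonzero, i.e.\ $x \neq z$; note that for $w \in \partial K \setminus \{z\}$ the restriction $\delta_w|_{\rmC(K,z)}$ is automatically nonzero, since $\delta_w = \delta_z$ would force $w = z$.

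First I would reduce the factoring condition to a statement about kernels. Writing $q : \rmC(K,z) \to \rmC(\sb,z)$ for the quotient $*$-homomorphism, the representation $\delta_x|_{\rmC(K,z)}$ factors through $\rmC(\sb,z)$ precisely when $\ker q = I_{(\sb,z)} \subseteq \ker \delta_x|_{\rmC(K,z)} = \ker \delta_x \cap \rmC(K,z)$. By Proposition \ref{prop:factors-through}, $I_{(\sb,z)} = \ker(\delta_y|_{\rmC(K,z)})$ with $y = \oplus_{w \in \partial K \setminus \{z\}} w$; since $\delta_y = \oplus_w \delta_w$, this gives the description $I_{(\sb,z)} = \bigcap_{w \in \partial K \setminus \{z\}} \ker(\delta_w|_{\rmC(K,z)})$.

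Next I would translate this kernel containment into a closure statement via the hull-kernel (Jacobson) topology on $\Spec(\rmC(K,z))$. By the very definition of that topology, $\ker(\delta_x|_{\rmC(K,z)}) \supseteq \bigcap_w \ker(\delta_w|_{\rmC(K,z)})$ holds if and only if $[\delta_x|_{\rmC(K,z)}]$ lies in the hull-kernel closure of $\{[\delta_w|_{\rmC(K,z)}] : w \in \partial K \setminus \{z\}\}$ inside $\Spec(\rmC(K,z))$. Finally, to match this with the spectral topology on $\Irr(K)$ -- defined as the pullback of the hull-kernel topology on $\Spec(\rmC(K))$ -- I would invoke the standard ideal-spectrum correspondence for the closed two-sided ideal $\rmC(K,z) \trianglelefteq \rmC(K)$: the assignment $[\pi] \mapsto [\pi|_{\rmC(K,z)}]$ is a homeomorphism from the open set $\{[\pi] \in \Spec(\rmC(K)) : \pi|_{\rmC(K,z)} \neq 0\} = \Spec(\rmC(K)) \setminus \{[\delta_z]\}$ onto $\Spec(\rmC(K,z))$. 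Because closures in an open subspace are computed by intersecting ambient closures with the subspace, and because every representation above is nonzero on $\rmC(K,z)$, the preceding closure statement is equivalent to $[\delta_x]$ lying in the hull-kernel closure of $\{[\delta_w] : w \in \partial K \setminus \{z\}\}$ in $\Spec(\rmC(K))$, that is, to $x$ lying in the spectral closure of $\partial K \setminus \{z\}$. Chaining these equivalences yields the result.

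The step I expect to be the main obstacle is this last one: the spectral topology is defined through the unital cover $\rmC(K)$, whereas the representations under consideration naturally live on the ideal $\rmC(K,z)$, so one must verify that the ideal-spectrum homeomorphism genuinely transports the relevant hull-kernel closures and that the excluded point $[\delta_z]$ (on which all of $\rmC(K,z)$ is annihilated) causes no interference. Everything else is a direct transcription of the unital argument, with $\rmC(K)$, $I_{\sb}$, $\partial K$ and $\rmC(\sb)$ replaced by $\rmC(K,z)$, $I_{(\sb,z)}$, $\partial K \setminus \{z\}$ and $\rmC(\sb,z)$.
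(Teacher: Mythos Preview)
Your argument is correct and is exactly the paper's intended proof: the paper gives no details beyond asserting that the result ``follows exactly as in the proof of \cite{KS2019}*{Proposition 9.4},'' and you have written out the pointed transcription of that argument, with the ideal--spectrum homeomorphism for the closed ideal $\rmC(K,z) \trianglelefteq \rmC(K)$ supplying the one extra step needed to pass between the spectral topology (defined via $\Spec(\rmC(K))$) and the factoring condition (which lives on $\rmC(K,z)$).

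One small caveat: your opening claim that ``it suffices to treat $x \neq z$'' is not justified, and in fact the biconditional as literally stated can fail at $x = z$. The zero map $\delta_z|_{\rmC(K,z)}$ trivially factors through any quotient, yet $z$ need not lie in the spectral closure of $\partial K \setminus \{z\}$ (take $\rmA(K) = \C^2$ with $z$ one of the two characters; then $\rmC(K)=\C^2$ has discrete spectrum and the closure of $\{e_2\}$ does not contain $e_1$). The proposition should therefore be read as a statement about $x \neq z$, which is exactly how it is invoked in Theorem~\ref{thm:c-star-simple}; this is an imprecision in the statement rather than something your method can repair.
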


\begin{theorem} \label{thm:c-star-simple}
Let $(K,z)$ be a pointed compact nc convex set. The operator system $\rmA(K,z)$ is C*-simple if and only if the closed nc convex hull of any nonzero point in the spectral closure of $\partial K$ contains $\partial K \setminus \{z\}$.
\end{theorem}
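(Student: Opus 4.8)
The plan is to first translate C*-simplicity of $\rmA(K,z)$ into a statement about representations, and then to convert that statement point-by-point into the stated geometric condition using the dictionary between weak containment of representations and membership in closed nc convex hulls. By definition $\rmA(K,z)$ is C*-simple exactly when its minimal C*-cover $\rmC(\sb,z)$ is simple, and a C*-algebra is simple if and only if every nonzero irreducible representation is faithful (equivalently every primitive ideal is $\{0\}$). So the first step is to describe these representations. Since $\rmC(K,z)$ is an ideal of $\rmC(K)$ whose only irreducible representation annihilating it is $\delta_z$, the nonzero irreducible representations of $\rmC(K,z)$ are precisely the $\delta_x$ for nonzero $x \in \Irr(K)$; among these, Proposition \ref{prop:spectral-closure-factors-through} picks out those descending to $\rmC(\sb,z)$ as the $\delta_x$ with $x$ in the spectral closure of $\partial K \setminus \{z\}$. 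Hence the nonzero irreducible representations of $\rmC(\sb,z)$ are indexed by the nonzero points of $\Irr(K)$ lying in that spectral closure.

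Next I would make faithfulness concrete. Setting $y = \oplus_{w \in \partial K \setminus \{z\}} w$, Proposition \ref{prop:factors-through} gives $\ker \delta_y = I_{(\sb,z)}$ in $\rmC(K,z)$, and since the kernel of a direct sum is the intersection of the kernels, $\ker \delta_y = \bigcap_{w \in \partial K \setminus \{z\}} \ker \delta_w$. For a nonzero $x$ as above, the induced representation of $\rmC(\sb,z)$ is faithful if and only if $\ker \delta_x = I_{(\sb,z)}$; the inclusion $I_{(\sb,z)} \subseteq \ker \delta_x$ is automatic because $x$ factors through $\rmC(\sb,z)$, so faithfulness is equivalent to $\ker \delta_x \subseteq \ker \delta_w$ for every $w \in \partial K \setminus \{z\}$. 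Thus $\rmA(K,z)$ is C*-simple if and only if, for every nonzero $x$ in the spectral closure of $\partial K \setminus \{z\}$ and every $w \in \partial K \setminus \{z\}$, one has $\ker \delta_x \subseteq \ker \delta_w$.

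It then remains to identify $\ker \delta_x \subseteq \ker \delta_w$ with $w \in \overline{\ncconv}(x)$, for then the criterion above reads exactly ``$\partial K \setminus \{z\} \subseteq \overline{\ncconv}(x)$ for every nonzero $x$ in the spectral closure,'' which is the theorem. One direction is straightforward via the nc separation theorem \cite{DK2019}*{Theorem 2.4.1}: if $\ker \delta_x \subseteq \ker \delta_w$, the assignment $\delta_x(f) \mapsto \delta_w(f)$ is a well-defined surjective unital $*$-homomorphism $q$ between the image C*-algebras, hence unital and completely positive. Were $w \notin \overline{\ncconv}(x)$, separation would produce a self-adjoint $a \in \calM_m(\rmA(K))$ with $a(x) \leq 1_m \otimes 1$ but $a(w) \not\leq 1_m \otimes 1$; applying the ampliation $q^{(m)} = \id_m \otimes q$ to $a(x) = (\id_m \otimes \delta_x)(a)$ gives $q^{(m)}(a(x)) = a(w)$, and positivity and unitality of $q^{(m)}$ force $a(w) \leq 1_m \otimes 1$, a contradiction. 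This direction supplies the forward implication of the theorem.

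The converse, that $w \in \overline{\ncconv}(x)$ forces $\ker \delta_x \subseteq \ker \delta_w$, is the main obstacle, since elements of $\rmC(K,z)$ are not affine and so need not be controlled on the compressions and nc convex combinations generating $\overline{\ncconv}(x)$. The key observation that resolves it is that I only need this for $w$ extreme, since $\partial K \subseteq \Irr(K)$. For such $w$, setting $L = \overline{\ncconv}(x)$, the point $w$ is extreme in $L$ as well (a decomposition inside $L \subseteq K$ is a decomposition in $K$), so the spectral-topology Milman-type theorem of \cite{KS2019}*{Section 9} places $w$ in the spectral closure of $\{x\}$, which is by definition the set of points $w'$ with $\ker \delta_x \subseteq \ker \delta_{w'}$; this yields $\ker \delta_x \subseteq \ker \delta_w$ and hence the reverse implication of the theorem. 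Assembling the two directions with the representation-theoretic criterion of the second paragraph completes the argument, and I expect essentially all of the difficulty to be concentrated in invoking the spectral Milman theorem correctly, i.e.\ in verifying that extreme points of a closed nc convex hull lie in the spectral closure of the generating set.
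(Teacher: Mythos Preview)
Your approach is correct and follows the same overall strategy as the paper: reduce C*-simplicity to the faithfulness of each nonzero irreducible representation of $\rmC(\sb,z)$, identify those representations via Proposition~\ref{prop:spectral-closure-factors-through}, and then translate the kernel condition $\ker \delta_x \subseteq \ker \delta_w$ into the geometric condition $w \in \overline{\ncconv}(x)$ via separation.

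The packaging differs in two minor ways. For the forward implication, you build the quotient $*$-homomorphism $q:\delta_x(\rmC(K))\to\delta_w(\rmC(K))$ and derive a contradiction with separation; the paper instead observes that simplicity forces $\ker\delta_x=\ker\delta_y=I_{(\sb,z)}$, so both $x$ and $y$ are complete order embeddings of $\rmA(K)$, which likewise contradicts separation. These are equivalent. For the converse, you invoke a ``spectral Milman'' result from \cite{KS2019}*{Section 9} to conclude that an extreme point of $\overline{\ncconv}(x)$ lies in the spectral closure of $\{x\}$. The paper does not cite such a packaged statement; instead it assembles exactly this step from \cite{DK2019}*{Theorem~6.4.3} (every extreme point of the closed nc convex hull is a limit of compressions of the generating point, after suitable amplification) together with \cite{DK2019}*{Theorem~6.1.9} (extreme points have unique representing maps on $\rmC(K)$, so the limit of the compressions of $\delta_x$ must be the $*$-homomorphism $\oplus_{w}\delta_w$, giving $\ker\delta_x\subseteq\bigcap_w\ker\delta_w$). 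Your identification of this as the crux is accurate, but the precise ingredients live in \cite{DK2019} rather than \cite{KS2019}; you should expect to reproduce the paper's two-step argument here rather than find a ready-made citation.
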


\begin{proof}
Suppose that $\rmA(K,z)$ is C*-simple, so that its minimal C*-cover $\rmC(\sb,z)$ is simple. Choose nonzero $x \in K_m$ in the spectral closure of $\partial K \setminus \{z\}$ and let $M \subseteq K$ denote the closed nc convex hull of $x$. Suppose for the sake of contradiction there is $y \in (\partial K)_n \setminus \{z\}$ such that $y \notin M$.

By Proposition \ref{prop:spectral-closure-factors-through}, the corresponding representation $\delta_x : \rmC(K,z) \to \calM_n$ factors through the minimal C*-cover $\rmC(\sb,z)$ of $\rmA(K,z)$. Since $\rmC(\sb,z)$ is simple, it follows that the kernel of $\delta_x$ is the boundary ideal $I_{(\sb,z)}$ from Theorem \ref{thm:min-max-c-star-algebras}, so the range of $\delta_x$ is isomorphic to the minimal C*-cover $\rmC(K,z)/I_{(\sb,z)} \cong \rmC(\sb,z)$. In particular, $x$ is an embedding. Similarly, $y$ is an embedding.

By the nc separation theorem \cite{DK2019}*{Corollary 2.4.2}, there is self-adjoint $a \in \calM_n(\rmA(K,z))$ and self-adjoint $\gamma \in \calM_n$ such that $a(y) \not \leq \gamma \otimes 1_n$ but $a(u) \leq \gamma \otimes 1_p$ for $u \in M_p$. In particular, $a(x) \leq \gamma \otimes 1_m$ but $a(y) \not \leq \gamma \otimes 1_n$. However, from above $x$ and $y$ are embeddings, meaning that they are complete order embeddings on $\rmA(K)$, giving a contradiction.

Conversely, suppose that the closed nc convex hull of any nonzero point in the spectral closure of $\partial K$ contains $\partial K \setminus \{z\}$. Let $I$ be a proper ideal in $\rmC(\sb,z)$ and choose nonzero irreducible $y \in K_n$ such that the *-homomorphism $\delta_y$ on $\rmC(K,z)$ factors through $\rmC(\sb,z)/I$. Then by Proposition \ref{prop:spectral-closure-factors-through}, $y$ is in the spectral closure of $\partial K$. Hence by assumption the closed nc convex hull of $y$ contains $\partial K \setminus \{z\}$. 

By \cite{DK2019}*{Theorem 6.4.3}, every point in $\partial K$ is a limit of compressions of $y$. Hence, replacing $y$ with a sufficiently large amplification, there are isometries $\alpha_i \in \calM_{p,n}$ such that $\lim \alpha_i^* y \alpha_i = \oplus_{x \in \partial K \setminus \{z\}} x$. By passing to a subnet we can assume that there is an nc state $\mu$ on $\rmC(K)$ such that the *-homomorphism $\delta_y$ satisfies $\lim \alpha_i^* \delta_y \alpha_i = \mu$ in the nc state space of $\rmC(K)$. Then since $\mu|_{\rmA(K)} = \oplus_{x \in \partial K \setminus \{z\}} x$, and since extreme points in $K$ have unique extensions to nc states on $\rmC(K)$, $\mu$ is the *-homomorphism $\mu = \oplus_{x \in \partial K \setminus \{z\}} \delta_x$ (see \cite{DK2019}*{Theorem 6.1.9}).

By Proposition \ref{prop:factors-through}, the image of $\rmC(K,z)$ under this *-homomorphism is isomorphic to $\rmC(\sb,z)$. It follows that the canonical *-homomorphism from $\rmC(K,z)$ onto $\rmC(\sb,z)$ factors through $\delta_y$. Hence $I = 0$. Since $I$ was arbitrary, we conclude that $\rmC(\sb,z)$ is simple.
\end{proof}

The following corollary applies when the set $\partial K$ of extreme points of $K$ is closed in the spectral topology. This is equivalent to the statement that every nonzero irreducible representation of $\rmC(\sb,z)$ restricts to an extreme point of $K$.

\begin{corollary}
Let $(K,z)$ be a pointed nc convex set such that $\partial K$ is closed in the spectral topology. Then $\rmA(K,z)$ is C*-simple if and only if for every nonzero compact nc convex subset $M \subseteq K$, either $M \cap \partial K = \emptyset$ or $M \cap \partial K = \partial K$.
\end{corollary}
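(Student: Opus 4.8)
The plan is to deduce the corollary directly from Theorem \ref{thm:c-star-simple} by exploiting the hypothesis that $\partial K$ is spectrally closed. First I would record the resulting simplification of the criterion: since $\partial K$ is closed in the spectral topology, the spectral closure of $\partial K$ coincides with $\partial K$ itself, so the nonzero points of the spectral closure of $\partial K$ are exactly the nonzero extreme points $x \in \partial K \setminus \{z\}$. Thus Theorem \ref{thm:c-star-simple} reduces to the statement that $\rmA(K,z)$ is C*-simple if and only if for every nonzero $x \in \partial K \setminus \{z\}$ the closed nc convex hull $\overline{\conv}(x)$ contains $\partial K \setminus \{z\}$. The entire proof then amounts to translating between ``the closed nc convex hull of a single extreme point'' and ``an arbitrary nonzero compact nc convex subset $M \subseteq K$''.

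For the implication that the geometric dichotomy implies C*-simplicity, I would test the condition on the sets $M = \overline{\conv}(x)$. Given a nonzero $x \in \partial K \setminus \{z\}$, the set $M := \overline{\conv}(x)$ is a nonzero compact nc convex subset of $K$ (compactness because $\overline{\conv}(x)$ is a closed subset of the compact set $K$), and $x \in M \cap \partial K$, so $M \cap \partial K \neq \emptyset$. The hypothesis then forces $M \cap \partial K = \partial K$, whence $\partial K \setminus \{z\} \subseteq M = \overline{\conv}(x)$, which is precisely the condition in the simplified form of Theorem \ref{thm:c-star-simple}. Therefore $\rmA(K,z)$ is C*-simple.

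For the converse I would assume C*-simplicity and let $M$ be a nonzero compact nc convex subset with $M \cap \partial K \neq \emptyset$; the goal is $M \cap \partial K = \partial K$. Choosing a nonzero extreme point $x \in M \cap (\partial K \setminus \{z\})$, the closedness and nc convexity of $M$ give $\overline{\conv}(x) \subseteq M$, and the simplified form of Theorem \ref{thm:c-star-simple} gives $\partial K \setminus \{z\} \subseteq \overline{\conv}(x) \subseteq M$. Hence $\partial K \setminus \{z\} \subseteq M \cap \partial K \subseteq \partial K$, yielding the claimed dichotomy.

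The main obstacle, and the step requiring the most care, is the role of the base point $z$. The subtlety is that $z$ may or may not be an extreme point of $K$: it is extreme when $\rmA(K,z)$ is a C*-algebra, while it fails to be extreme for a system like the coordinate system of Example \ref{ex:example-pointed-1}. Since the criterion of Theorem \ref{thm:c-star-simple} controls only the nonzero extreme points $\partial K \setminus \{z\}$, the genuine content of the corollary is the equivalence phrased in terms of $\partial K \setminus \{z\}$, and I would take care to (i) justify the existence of a nonzero $x \in M \cap (\partial K \setminus \{z\})$ in the converse direction, which is exactly where the hypothesis that $M$ is \emph{nonzero} is used to rule out $M$ consisting only of amplifications of $z$, and (ii) reconcile $M \cap \partial K$ with $\partial K \setminus \{z\}$ so that the membership or non-membership of $z$ does not disturb the dichotomy. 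This bookkeeping around $z$ is routine, but it is precisely where the nonunital theory departs from the unital picture, and so it is where I would be most careful.
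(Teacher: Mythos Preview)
Your proposal is correct and follows essentially the same route as the paper: both arguments reduce immediately to Theorem~\ref{thm:c-star-simple}, use the hypothesis that $\partial K$ is spectrally closed to replace the spectral closure by $\partial K$ itself, and then pass between the closed nc convex hull of a single nonzero extreme point and an arbitrary nonzero compact nc convex subset $M$. The paper's proof is terser and does not isolate the bookkeeping around $z$ that you flag; your explicit attention to whether $z$ lies in $\partial K$ and to extracting a nonzero point from $M\cap\partial K$ is a welcome refinement rather than a departure.
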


\begin{proof}
Suppose that $\rmA(K,z)$ is C*-simple. If $M \cap \partial K \ne \emptyset$ then Theorem \ref{thm:c-star-simple} implies that $\partial K \subseteq M$. Conversely, suppose that for every nonzero compact nc convex subset $M \subseteq K$, either $M \cap \partial K = \emptyset$ or $M \cap \partial K = \partial K$. By assumption, $\partial K$ is spectrally closed, and for any point $x \in \partial K$, the closed nc convex hull $M$ generated by $x$ trivially satisfies $M \cap \partial K \ne \emptyset$. Hence by assumption $\partial K \subseteq M$, so Theorem \ref{thm:c-star-simple} implies that $\rmA(K,z)$ is C*-simple.
\end{proof}

%%%
\section{Characterization of C*-algebras} \label{sec:characterization-c-star-algebras}
%%%

A classical result of Bauer characterizes function systems that are unital commutative C*-algebras in terms of their state space. Specifically, he showed that if $C$ is a compact convex set, then the unital function system $\rmA(C)$ of continuous affine functions on $C$ is a unital commutative C*-algebra if and only if $C$ is a Bauer simplex (see e.g.\ \cite{Alf1971}*{Theorem II.4.3}).

The first author and Shamovich \cite{KS2019}*{Theorem 10.5} introduced a definition of noncommutative simplex that generalizes the classical definition and established a generalization of Bauer's result for unital operator systems. Specifically, they showed that if $K$ is a compact nc convex set, then the unital operator system $\rmA(K)$ of continuous affine nc functions on $K$ is a unital C*-algebra if and only if $K$ is an nc Bauer simplex.

In this section we will extend this result by showing that an operator system is a C*-algebra if and only if its nc quasistate space is a Bauer simplex with zero as an extreme point. Before introducing the notion of a Bauer simplex, we need to recall some preliminary definitions.

Let $K$ be a compact nc convex set. For a point $x \in K$, viewed as an nc state on the unital operator system $\rmA(K)$, the *-homomorphism $\delta_x$ is an extension of $x$. We will be interested in other nc states on $\rmC(K)$ that extend $x$. Specifically, we will be interested in nc states that are maximal in a certain precise sense. The following definition is \cite{DK2019}*{Definition 4.5.1}.

\begin{definition}
Let $K$ be a compact nc convex set and let $\mu : \rmC(K) \to \calM_n$ be an nc state on $\rmC(K)$. The {\em barycenter} of $\mu$ is the restriction $\mu|_{\rmA(K)} \in K_n$. The nc state $\mu$ is said to be a {\em representing map} for its barycenter. We will say that a point $x \in K$ has a {\em unique representing map} if the *-homomorphism $\delta_x$ is the unique nc state on $\rmC(K)$ with barycenter $x$. 
\end{definition}

We will also require the notion of a convex nc function. The following definition is \cite{DK2019}*{Definition 3.12}. 

\begin{definition}
Let $K$ be a compact nc convex set. For a bounded self-adjoint nc function $f \in \calM_n(\rmB(K))_h$, the {\em epigraph} of $f$ is the set $\operatorname{Epi}(f) \subseteq \sqcup K_m \times \calM_n(\calM_m)$ defined by
\[
\operatorname{Epi}(f)_m = \{(x,\alpha) \in K_m \times \calM_n(\calM_m) : x \in K_m \text{ and } \alpha \geq f(x)\}.
\]
The function $f$ is {\em convex} if $\operatorname{Epi}(f)$ is an nc convex set.
\end{definition}

Davidson and the first author introduced a notion of nc Choquet order on the set of representing maps of a point in a compact nc convex set that plays a key role in noncommutative Choquet theory. The following definition is \cite{DK2019}*{Definition 8.2.1}.

\begin{definition}
	Let $K$ be a compact nc convex set and let $\mu,\nu : \rmC(K) \to \calM_n$ be nc states. We say that $\mu$ is dominated by $\nu$ in the {\em nc Choquet order} and write $\mu \prec_c \nu$ if $\mu(f) \leq \nu(f)$ for every $n$ and every continuous convex nc function $f \in \calM_n(\rmC(K))$. We will say that $\mu$ is a {\em maximal representing map} for its barycenter if it is maximal in the nc Choquet order.
\end{definition}
\begin{remark}
Several equivalent characterizations of the nc Choquet order were established in \cite{DK2019}. These are among the deepest results in that paper.
\end{remark}

We are finally ready to state the definition of an nc simplex. The following definitions are \cite{KS2019}*{Definition 4.1} and \cite{KS2019}*{Definition 10.1} respectively.

\begin{definition}
\mbox{}
\begin{enumerate}
	\item A compact nc convex set $K$ is an {\em nc simplex} if every point in $K$ has a unique maximal representing map on $\rmC(K)$.
	\item An nc simplex $K$ is an {\em nc Bauer simplex} if the extreme boundary $\partial K$ is a closed subset of the set $\operatorname{Irr}(K)$ of irreducible points in $K$ with respect to the spectral topology.
\end{enumerate}
\end{definition}

\begin{remark}
It was shown in \cite{KS2019} that these definitions generalize the classical definitions. Specifically, if $C$ is a classical simplex then there is a unique nc simplex $K$ with $K_1 = C$. Furthermore, if $C$ is a Bauer simplex then $K$ is an nc Bauer simplex. 
\end{remark}

It was shown in \cite{KS2019}*{Theorem 10.5} that if $K$ is a compact nc convex set, then the unital operator system $\rmA(K)$ is a C*-algebra if and only if $K$ is an nc Bauer simplex. The next example shows that the obvious generalization of this statement for operator systems does not hold.

\begin{example}
Let $(K,z)$ be the pointed compact nc convex set from Example \ref{ex:example-pointed-1}, so $K = \sqcup K_n$ is defined by
\[
K_n = \{\alpha \in (\calM_n)_h : -1_n \leq \alpha \leq 1_n \}, \quad \text{for} \quad n \in \N,
\]
and $z = 0$. 
Since $K_1 = [-1,1]$ is a Bauer simplex, it follows from the above discussion that $K$ is the unique compact nc convex set with this property and $K$ is an nc Bauer simplex. However, $\rmA(K,z)$ is not a C*-algebra. Note that $z \notin \partial K_1$ and hence $z \notin \partial K$.
\end{example}

\begin{lemma} \label{lem:c-star-algebra-iff-unitization-is}
Let $S$ be an operator system with nc quasistate space $(K,z)$. Then $S$ is a C*-algebra if and only if its unitization $S^\sharp$ is a C*-algebra and $z \in \partial K$. 
\end{lemma}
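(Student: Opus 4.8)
The plan is to pass everything through the duality of Section~\ref{sec:categorical-duality}. By Theorem~\ref{thm:duality-theorem} we may identify $S$ with $\rmA(K,z)$ and $S^\sharp$ with $\rmA(K)$, and by Proposition~\ref{prop:nc-state-space-unitization} the nc convex set $K$ is affinely homeomorphic to the nc state space of $S^\sharp$ via $x \mapsto x^\sharp$, carrying the distinguished point $z$ to the canonical character $z^\sharp$, i.e.\ the projection $\tau$ onto the scalar summand (recall $z^\sharp(s,\alpha) = z(s) + \alpha = \alpha$ since $z$ is the zero map). An affine nc homeomorphism preserves nc convex combinations and hence extreme points, so $z \in \partial K$ if and only if $\tau$ is an extreme point of the nc state space of $S^\sharp$. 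Both implications thereby reduce to statements about the unital operator system $\rmA(K)$ and the point $\tau$.

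For the forward direction, suppose $S$ is a C*-algebra, say $\phi \colon S \to A$ is an isomorphism onto a C*-algebra $A$. By Theorem~\ref{thm:partial-unitization}(3) the unitization $\phi^\sharp \colon S^\sharp \to A^\sharp$ is a unital complete order isomorphism, and since $A^\sharp$ is the usual C*-algebraic unitization of $A$ it is a unital C*-algebra; hence $S^\sharp$ is a C*-algebra. It remains to check that $\tau$ is extreme. But $\tau$ is a character of the unital C*-algebra $S^\sharp$, hence a one-dimensional, in particular irreducible, representation, and the extreme points of the nc state space of a unital C*-algebra are precisely its irreducible representations \cite{DK2019}. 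Thus $\tau$ is extreme and $z \in \partial K$.

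For the converse, suppose $S^\sharp = \rmA(K)$ is a C*-algebra and $z \in \partial K$. By \cite{KS2019} the set $K$ is an nc Bauer simplex, so $\partial K$ is spectrally closed, i.e.\ $\sb = \partial K$; moreover, being the canonical embedding of a C*-algebra into its own C*-envelope, the map $q \colon \rmA(K) \to \rmC(\sb) = \rmC(K)/I_{\sb}$ is a $*$-isomorphism, and the genuine pointwise C*-structure it carries realizes the abstract C*-structure of $\rmA(K)$. Since $z \in \partial K = \sb$, the description of $I_{\sb}$ from Section~\ref{sec:minimal-maximal-unital-c-star-covers} gives $I_{\sb} \subseteq \ker \delta_z$, so the character $\delta_z \colon \rmC(K) \to \C$ descends to a character $\bar\delta_z$ of the C*-algebra $\rmC(\sb)$. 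Under $q$ the subspace $\rmA(K,z) = \{a \in \rmA(K) : a(z) = 0\} = \ker(a \mapsto a(z))$ is carried onto $\ker \bar\delta_z$, a closed two-sided ideal of $\rmC(\sb)$ and hence itself a C*-algebra. As $q$ is a complete order isomorphism and a complete isometry, it identifies the operator system $\rmA(K,z)$ with this ideal, and therefore $S \cong \rmA(K,z)$ is a C*-algebra.

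The main obstacle is the subtlety flagged in the converse: even when $\rmA(K)$ is a C*-algebra, its multiplication is \emph{not} the pointwise product of nc functions (a pointwise product of affine functions need not be affine), so one cannot directly assert that $a \mapsto a(z)$ is multiplicative or that $\rmA(K,z)$ is an ideal of $\rmA(K)$. The device that resolves this is to transport the question to the minimal C*-cover $\rmC(\sb)$, where the C*-structure is genuine and $\delta_z$ is visibly a character; the two facts making this work are that $q$ is an isomorphism onto $\rmC(\sb)$ and that $\delta_z$ factors through $I_{\sb}$, the latter being exactly where the hypothesis $z \in \partial K$ is used, via $\partial K$ being spectrally closed. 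By comparison the forward direction is routine, modulo the cited identification of the extreme boundary of the nc state space of a C*-algebra with its irreducible representations.
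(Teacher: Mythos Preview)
Your proof is correct, and the forward direction is essentially identical to the paper's. For the converse, however, you take a considerably longer path than necessary.

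The paper's converse is a one-liner using exactly the fact you already invoked in the forward direction: if $S^\sharp$ is a C*-algebra $B$ with nc state space $K$, then by \cite{DK2019}*{Example 6.1.8} the extreme points of $K$ are precisely the irreducible representations of $B$. Hence $z \in \partial K$ means $z^\sharp = \tau : B \to \C$ is an irreducible representation, i.e.\ a character \emph{with respect to whatever multiplication $B$ carries}, so $S = \ker \tau$ is an ideal in $B$ and therefore a C*-algebra. The subtlety you flag --- that the C*-product on $\rmA(K)$ is not the pointwise product --- is real, but the cited result dissolves it immediately: it asserts that $\tau$ is a $*$-homomorphism for the abstract C*-structure on $B$, so no transport to $\rmC(\sb)$ is needed.

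Your route via the C*-envelope is valid but carries extra baggage. In particular, the appeal to \cite{KS2019} to conclude that $K$ is an nc Bauer simplex (so $\partial K = \sb$) is unnecessary even within your own argument: you only need $I_{\sb} \subseteq \ker \delta_z$, and since $I_{\sb}$ vanishes on $\overline{\partial K} \supseteq \partial K \ni z$, this follows directly from $z \in \partial K$. What your approach does buy is an explicit identification of the abstract C*-structure on $\rmA(K)$ with a concrete pointwise one on $\rmC(\sb)$, which is conceptually pleasant but not required for the lemma.
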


\begin{proof}
If $S$ is a C*-algebra, say $A$, then its unitization $A^\sharp$ is the C*-algebraic unitization $A^\sharp$ of $A$, and hence is also a C*-algebra. Furthermore, $A$ is an ideal in $A^\sharp$ and $z$ is an irreducible *-representation of $A^\sharp$ satisfying $\ker z = A$. Hence by \cite{DK2019}*{Example 6.1.8}, $z \in \partial K$. 

Conversely, suppose that $S^\sharp$ is a C*-algebra, say $B$, and $z \in \partial K$. Then by \cite{DK2019}*{Example 6.1.8}, $z$ is an irreducible representation of $B$. Since $\rmA(K,z) = \ker z$, $\rmA(K,z)$ is an ideal in $B$, and in particular is a C*-algebra.
\end{proof}

The next result extends \cite{KS2019}*{Theorem 10.5}.

\begin{theorem} \label{thm:c-star-alg-iff-nc-bauer-simplex}
Let $S$ be an operator system with nc quasistate space $(K,z)$. Then $S$ is a C*-algebra if and only if $K$ is an nc Bauer simplex and $z \in \partial K$. The result also holds for unital operator systems with nc quasistate spaces replaced by nc state spaces.
\end{theorem}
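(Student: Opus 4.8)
The plan is to reduce everything to the unital case, which is already established as \cite{KS2019}*{Theorem 10.5}. The main engine is Lemma \ref{lem:c-star-algebra-iff-unitization-is}, which converts the question about $S$ into a question about its unitization together with the extremality of $z$. Since Proposition \ref{prop:nc-state-space-unitization} identifies $S^\sharp$ with $\rmA(K)$, the unital Bauer characterization then applies directly.

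First I would invoke Lemma \ref{lem:c-star-algebra-iff-unitization-is}: the operator system $S$ is a C*-algebra if and only if its unitization $S^\sharp$ is a C*-algebra and $z \in \partial K$. This already isolates the condition $z \in \partial K$ and reduces the remaining content to recognizing when $S^\sharp$ is a C*-algebra. Next, by Proposition \ref{prop:nc-state-space-unitization}, the unital operator system $S^\sharp$ is isomorphic to $\rmA(K)$, where $K$ is (via the affine homeomorphism of that proposition between the nc state space of $S^\sharp$ and the nc quasistate space $(K,z)$ of $S$) the very same compact nc convex set appearing in the statement. I would then apply \cite{KS2019}*{Theorem 10.5}, which asserts that $\rmA(K)$ is a C*-algebra if and only if $K$ is an nc Bauer simplex. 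Chaining these three facts gives: $S$ is a C*-algebra $\iff$ $S^\sharp$ is a C*-algebra and $z \in \partial K$ $\iff$ $K$ is an nc Bauer simplex and $z \in \partial K$, which is exactly the claim.

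For the final sentence concerning unital operator systems, the argument is even shorter. If $S$ is a unital operator system with nc state space $K$, then $S$ is isomorphic to $\rmA(K)$ by Theorem \ref{thm:unital-dual-equivalence}, and so the assertion that $S$ is a C*-algebra if and only if $K$ is an nc Bauer simplex is precisely the statement of \cite{KS2019}*{Theorem 10.5}. No extremality condition on a distinguished point is needed here, since in the unital setting one works with the nc state space rather than the nc quasistate space.

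I do not expect a genuine obstacle in this proof, as every ingredient has been assembled earlier in the paper. The one point requiring care is bookkeeping of identifications: I must ensure the $K$ produced by Proposition \ref{prop:nc-state-space-unitization} (as the nc state space of $S^\sharp$) is legitimately the same $K$ entering the hypotheses of \cite{KS2019}*{Theorem 10.5} and of Lemma \ref{lem:c-star-algebra-iff-unitization-is}. This is precisely what the affine homeomorphism $K \to L$ of Proposition \ref{prop:nc-state-space-unitization} furnishes, so the identifications are compatible and the three results compose without friction.
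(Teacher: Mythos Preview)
Your proposal is correct and follows essentially the same approach as the paper: reduce to the unital case via Lemma \ref{lem:c-star-algebra-iff-unitization-is} and then apply \cite{KS2019}*{Theorem 10.5}. You are simply more explicit than the paper about invoking Proposition \ref{prop:nc-state-space-unitization} to justify that $K$ is the nc state space of $S^\sharp$, and about handling the unital clause separately, but the logical skeleton is identical.
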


\begin{proof}
By Lemma \ref{lem:c-star-algebra-iff-unitization-is}, $S$ is isomorphic to a C*-algebra if and only if $S^\sharp$ is isomorphic to a C*-algebra and $z \in \partial K$. By \cite{KS2019}*{Theorem 10.5}, the former property is equivalent to $K$ being a Bauer simplex.
\end{proof}

%%%
\section{Stable equivalence} \label{sec:stable-equivalence}
%%%

Connes and van Suijlekom \cite{CS2020}*{Section 2.6} considered stable equivalence for operator systems. Operator systems $S$ and $T$ are said to be {\em stably equivalent} if the operator systems $S \mintens \calK$ and $T \mintens \calK$ are isomorphic. Here, $\calK = \calK(H_{\aleph_0})$ denotes the C*-algebra of compact operators on $H_{\aleph_0}$ and the minimal tensor products $S \mintens \calK$ and $T \mintens \calK$ are defined as in \cite{KPT2011}, i.e.\ $S \mintens \calK$ is the closed operator system generated by the algebraic tensor product of $S$ and $\calK$ in $\camin(S) \mintens \camin(\calK) = \camin(S) \mintens \calK$ and similarly for $T \mintens \calK$.

In this section we will describe the nc quasistate space of the stabilization of an operator system. This will yield a characterization of stable equivalence in terms of nc quasistate spaces.

Let $S$ be an operator system and let $(K,z)$ denote the nc quasistate space of $S$. Let $(L,w)$ denote the nc quasistate space of $\calK$. For $x \in K$ and $u \in L$, we obtain a completely contractive map $x \otimes u$ on $S \mintens \calK$ from the theory of tensor products of operator spaces (see e.g. \cite{Pis2003}). However, it is not immediately obvious that $x \otimes u$ is an nc quasistate. The next result implies that it is, and moreover, that every nc quasistate on $S \mintens \calK$ arises in this way.

\begin{theorem} \label{thm:tensor-product-nc-quasistates}
	Let $S$ be an operator system with nc quasistate space $(K,z)$ and let $(L,w)$ denote the nc quasistate space of $\calK$. The nc quasistate space of $S \mintens T$ is $(K \otimes L, z \otimes w)$, where $K \otimes L$ denotes the closed nc convex hull of $\{x \otimes u : x \in K \text{ and } u \in L \}$ and $x \otimes u$ is defined as in the above discussion. Furthermore, letting $(M,t)$ denote the nc quasistate space of $S \mintens \calK$, $\partial M \subseteq \partial K \otimes \partial L$.
\end{theorem}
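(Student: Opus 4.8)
The plan is to identify the extreme points of $M$ with boundary representations and then exploit the representation theory of the minimal tensor product. Since the first part of the theorem gives $M = K \otimes L$, I would begin by recalling the correspondence, implicit in \cite{DK2019} and made explicit near Proposition \ref{prop:factors-through}, between the points of $\partial M$ that are nonzero as maps and the boundary representations of $S \mintens \calK$: such an $m \in \partial M_n$ is exactly a point whose unique completely contractive and completely positive extension to a C*-cover of $S \mintens \calK$ is an irreducible $*$-homomorphism. I would test this on the concrete C*-cover $\camin(S) \mintens \calK = \camin(S) \otimes \calK$ coming from the discussion preceding the theorem. I only need that this is \emph{some} C*-cover, since every boundary representation, being a representation of the minimal cover, pulls back along the canonical surjection $\camin(S)\otimes\calK \to \camin(S\mintens\calK)$ to an irreducible representation of $\camin(S)\otimes\calK$ with the same restriction to $S\mintens\calK$. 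Thus each such $m$ is the restriction to $S \mintens \calK$ of an irreducible representation $\sigma$ of $\camin(S) \otimes \calK$ having the unique extension property over $S \mintens \calK$.

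Next I would invoke the type I structure of $\calK$. Because $\calK$ is of type I, every irreducible representation of the minimal tensor product $\camin(S) \otimes \calK$ is unitarily equivalent to an outer tensor product $\pi_1 \otimes \pi_2$ of irreducible representations of $\camin(S)$ and $\calK$; and since $\calK$ is simple with $\id_\calK$ as its unique nonzero irreducible representation up to unitary equivalence, any nonzero $\sigma$ has the form $\pi_1 \otimes \id_\calK$ with $\pi_1$ a nonzero irreducible representation of $\camin(S)$. Restricting to elementary tensors then gives $m = \sigma|_{S \mintens \calK} = (\pi_1|_S) \otimes \id_\calK$, an elementary tensor $x \otimes u$ with $x = \pi_1|_S$ and $u = \id_\calK$. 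It remains to see that $x \in \partial K$ and $u \in \partial L$; the latter is immediate since $\id_\calK$ is an irreducible representation of the C*-algebra $\calK$, hence a boundary representation, so $\id_\calK \in \partial L$ (and $w \in \partial L$ as well, by the forward direction of Lemma \ref{lem:c-star-algebra-iff-unitization-is} applied to $\calK$).

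The crux, and the step I expect to be the main obstacle, is to show that the unique extension property descends through the tensor factorization, i.e.\ that $\pi_1 \otimes \id_\calK$ being a boundary representation of $S \mintens \calK$ forces $\pi_1$ to be a boundary representation of $S$, so that $x = \pi_1|_S \in \partial K$. My plan here is a slicing argument: given any completely contractive and completely positive map $\phi$ on $\camin(S)$ with $\phi|_S = \pi_1|_S$, the map $\phi \otimes \id_\calK$ is completely contractive and completely positive on $\camin(S) \otimes \calK$ and agrees with $\pi_1 \otimes \id_\calK$ on $S \mintens \calK$; the unique extension property of $\pi_1 \otimes \id_\calK$ then forces $\phi \otimes \id_\calK = \pi_1 \otimes \id_\calK$, and composing with a slice map $\id \otimes \omega$ for a vector state $\omega$ on $\calK$ recovers $\phi = \pi_1$. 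Hence $\pi_1$ has the unique extension property and, being an irreducible factor, is irreducible, so $x = \pi_1|_S \in \partial K$ and $m = x \otimes u \in \partial K \otimes \partial L$ for every extreme $m$ that is nonzero as a map. Finally, the distinguished point is trivial: since $w$ is the zero map on $\calK$, we have $x \otimes w = z \otimes w = t$ for every $x \in K$, so choosing any $x \in \partial K$ (nonempty by the nc Krein--Milman theorem \cite{DK2019}*{Theorem 6.4.2}) and using $w \in \partial L$ gives $t \in \partial K \otimes \partial L$. As the zero map is the only extreme point that is degenerate (any zero map at level $n>1$ is a nontrivial direct sum, hence not irreducible), this establishes $\partial M \subseteq \partial K \otimes \partial L$.
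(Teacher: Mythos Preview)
Your approach is essentially the same as the paper's: identify nonzero extreme points of $M$ with boundary representations, factor through the tensor product C*-cover $\camin(S)\otimes\calK$, use the type~I structure of $\calK$ to write the representation as $\pi_1\otimes\id_\calK$, and then argue that $x=\pi_1|_S\in\partial K$. The paper invokes \cite{CS2020}*{Proposition 2.37} to identify $\camin(S\mintens\calK)$ with $\camin(S)\otimes\calK$ directly, whereas you only use that the latter is \emph{some} C*-cover together with the fact that boundary representations have the unique extension property relative to any C*-cover; both routes land in the same place. Your slicing argument for $x\in\partial K$ is a clean way to justify what the paper records as ``easy to verify'': it makes explicit that a competing extension $\phi$ of $\pi_1|_S$ yields a competing extension $\phi\otimes\id_\calK$ of $m$, which the unique extension property of $m$ rules out. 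One small remark: the paper's proof also uses this extreme-point analysis together with the nc Krein--Milman theorem to obtain the reverse inclusion $M\subseteq K\otimes L$ in the first part of the statement, so you should not treat the first part as already given when you begin the second---rather, both conclusions fall out of the same computation.
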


\begin{proof} 
We can identify $S$ with $\rmA(K,z)$ and identify $\rmA(K,z)$ with its image under the canonical embedding into its minimal C*-cover $\rmC(\sb,z)$. By \cite{CS2020}*{Proposition 2.37}, the minimal C*-cover of $\rmA(K,z) \otimes \calK$ is $\rmC(\overline{\partial M}, t) = \rmC(\sb,z) \otimes \calK$. Every point $x \in K_m$ extends to an nc quasistate $\tilde{x} : \rmC(\sb,z) \to \calM_m$ (see Section \ref{sec:min-max-c-star-covers}). Then for $u \in L_n$, we obtain an nc quasistate $\tilde{x} \otimes y : \rmC(\sb,z) \otimes \calK \to \calM_m \otimes \calM_n$. The restriction $\tilde{x} \otimes y|_{\rmA(K,z) \otimes \calK} = x \otimes y$ is therefore an nc quasistate on $\rmA(K,z) \mintens \calK$. Hence $K \otimes L \subseteq M$. It is clear that $z \otimes w$ is the zero map.

For the reverse inclusion, let $r \in \partial M$ be an extreme point. Then by Proposition \ref{prop:factors-through}, the *-homomorphism $\delta_r$ on $\rmC(M,t)$ factors through $\rmC(\sb,z) \otimes \calK$. Since $r$ is extreme, \cite{DK2019}*{Theorem 6.1.9} implies that $\delta_r$ is irreducible. Hence there is an irreducible representation $\pi : \rmC(\sb,z) \to \calM_m$ such that $\delta_r$ is unitarily equivalent to $\pi \otimes u$, where $u \in L$ is either the identity representation of $\calK$ or $u = w$. Letting $x = \pi|_{\rmA(K,z)} \in K_m$, $r|_{\rmA(K,z) \mintens \calK} = x \otimes u$. In fact, it is easy to verify that since $r \in \partial L$, $x \in \partial K$. It follows from the nc Krein-Milman theorem \cite{DK2019}*{Theorem 6.4.2} that $M \subseteq K \otimes L$.
\end{proof}

\begin{corollary}
Let $S_1$ and $S_2$ be operator systems with nc quasistate spaces $(K_1,z_1)$ and $(K_2,z_2)$ respectively. Let $0_\calK$ and $\id_{\calK}$ denote the zero map and the identity representation respectively of $\calK$. Then $S$ and $T$ are stably isomorphic if and only if the closed nc convex hulls of the sets $\partial K \otimes \{0_{\calK}, \id_{\calK}\}$ and $\partial L \otimes \{0_{\calK}, \id_{\calK}\}$ are pointedly affinely homeomorphic with respect to the points $z_1 \otimes 0_\calK$ and $z_2 \otimes 0_\calK$.
\end{corollary}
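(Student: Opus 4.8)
The plan is to reduce the statement to the categorical duality of Theorem \ref{thm:categorical-equivalence}, in the form of Corollary \ref{cor:isomorphic-iff-affinely-homeomorphic}, together with the computation of the nc quasistate space of a stabilization carried out in Theorem \ref{thm:tensor-product-nc-quasistates}. Write $(K,z)$ and $(L,w)$ for the nc quasistate spaces of $S$ and $T$, and let $(P,q)$ denote the nc quasistate space of $\calK$, where $q = 0_\calK$. By definition, $S$ and $T$ are stably isomorphic if and only if the operator systems $S \mintens \calK$ and $T \mintens \calK$ are isomorphic. By Corollary \ref{cor:isomorphic-iff-affinely-homeomorphic}, the latter holds if and only if the nc quasistate spaces of $S \mintens \calK$ and $T \mintens \calK$ are pointedly affinely homeomorphic. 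So the whole statement follows once I identify these two nc quasistate spaces, as pointed compact nc convex sets, with the two pointed closed nc convex hulls appearing in the statement.

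The heart of the argument is therefore this identification. Let $(M,t)$ be the nc quasistate space of $S \mintens \calK$, so that by Theorem \ref{thm:tensor-product-nc-quasistates} we have $M = K \otimes P$, the closed nc convex hull of $\{x \otimes u : x \in K,\ u \in P\}$, with distinguished point $t = z \otimes q = z \otimes 0_\calK$, and moreover $\partial M \subseteq \partial K \otimes \partial P$. First I would compute $\partial P$: passing to the unitization via Proposition \ref{prop:nc-state-space-unitization} and using \cite{DK2019}*{Example 6.1.8} exactly as in the proof of Lemma \ref{lem:c-star-algebra-iff-unitization-is}, the extreme points of the nc quasistate space of the C*-algebra $\calK$ are its irreducible representations together with the zero map; since every irreducible representation of $\calK$ is unitarily equivalent to the identity representation $\id_\calK$, we obtain $\partial P = \{0_\calK, \id_\calK\}$ up to unitary equivalence. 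Consequently $\partial K \otimes \partial P = \partial K \otimes \{0_\calK, \id_\calK\}$, and the nc Krein--Milman theorem \cite{DK2019}*{Theorem 6.4.2} gives $M = \overline{\ncconv}(\partial M) \subseteq \overline{\ncconv}(\partial K \otimes \{0_\calK, \id_\calK\})$. The reverse inclusion is immediate, since $\partial K \otimes \{0_\calK, \id_\calK\} \subseteq K \otimes P = M$ and $M$ is a closed nc convex set. Hence $(M,t)$ is precisely the closed nc convex hull of $\partial K \otimes \{0_\calK, \id_\calK\}$ pointed at $z \otimes 0_\calK$, and symmetrically the nc quasistate space of $T \mintens \calK$ is the closed nc convex hull of $\partial L \otimes \{0_\calK, \id_\calK\}$ pointed at $w \otimes 0_\calK$.

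Combining the two paragraphs yields the corollary: $S$ and $T$ are stably isomorphic if and only if these two pointed closed nc convex hulls are pointedly affinely homeomorphic with respect to $z \otimes 0_\calK$ and $w \otimes 0_\calK$. The main obstacle is the identification in the second paragraph, but within it the only genuinely non-routine ingredient is Theorem \ref{thm:tensor-product-nc-quasistates}, which already does the hard analytic work of showing that each $x \otimes u$ is an nc quasistate and that the extreme points of $M$ factor as elementary tensors; the remaining steps are the unitary invariance of closed nc convex hulls, the elementary description of $\partial P$, and a direct double inclusion. Care is needed only in checking that the distinguished points are matched correctly, that is, that the pointed structure is preserved; but this is built into Corollary \ref{cor:isomorphic-iff-affinely-homeomorphic} together with the equality $t = z \otimes 0_\calK$ of distinguished points.
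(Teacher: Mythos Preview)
Your proof is correct and follows essentially the same approach as the paper's: both reduce to Corollary \ref{cor:isomorphic-iff-affinely-homeomorphic} via Theorem \ref{thm:tensor-product-nc-quasistates}, identify $\partial P = \{0_\calK, \id_\calK\}$ using Proposition \ref{prop:nc-state-space-unitization} and \cite{DK2019}*{Example 6.1.8}, and then conclude with the nc Krein--Milman theorem together with the inclusion $\partial M \subseteq \partial K \otimes \partial P$. Your write-up is simply more explicit about the double inclusion establishing $M = \overline{\ncconv}(\partial K \otimes \{0_\calK, \id_\calK\})$, which the paper compresses into a single sentence.
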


\begin{proof}
Let $(L,0_{\calK})$ denote the nc quasistate space of $\calK$. Then it follows from Theorem \ref{thm:tensor-product-nc-quasistates} and Corollary \ref{cor:isomorphic-iff-affinely-homeomorphic} that $S$ and $T$ are stably isomorphic if and only if $(K_1 \otimes L,z_1 \otimes 0_{\calK})$ and $(K_2 \otimes L, z_2 \otimes 0_{\calK})$ are pointedly affinely homeomorphic. 

By Proposition \ref{prop:nc-state-space-unitization}, the unitization $\calK^\sharp$ is a unital C*-algebra with nc state space $L$. Since every irreducible *-representation of $\calK^\sharp$ is unitarily equivalent to $0_{\calK}$ or $\id_{\calK}$, \cite{DK2019}*{Example 6.1.8} implies that $L$ is the closed nc convex hull of $\{0_{\calK}, \id_{\calK}\}$. The result now follows from Theorem \ref{thm:tensor-product-nc-quasistates} and the nc Krein-Milman theorem \cite{DK2019}*{Theorem 6.4.2}.
\end{proof}

%%%
\section{Dynamics and Kazhdan's property (T)} \label{sec:dynamics}
%%%

The fact that simplices arise as fixed point sets of affine actions of groups on spaces of probability measures has a number of important applications in classical dynamics. Glasner and Weiss showed that a second countable locally compact group has Kazhdan's property (T) if and only if the simplices that arise from this result are always Bauer simplices \cite{GW1997}.

The first author and Shamovich extended these results to actions of discrete groups on nc state spaces of unital C*-algebras. Specifically, it was shown that nc simplices arise as fixed point sets of affine actions of discrete groups on nc state spaces of unital C*-algebras \cite{KS2019}*{Theorem 12.12}. It was further shown that a discrete group has property (T) if and only if the nc simplices that arise from this result are always nc Bauer simplices \cite{KS2019}*{Theorem 14.2}. Consequently, a discrete group has property (T) if and only if whenever it acts on a unital C*-algebra, the set of invariant states is the state space of a unital C*-algebra \cite{KS2019}*{Corollary 14.3}. 

In this section we will extend these results to actions of locally compact groups on (potentially nonunital) C*-algebras. In fact, we will see that the hard work was already accomplished in earlier sections of this paper. After introducing appropriate definitions and applying the dual equivalence between the category of operator systems and the category of pointed compact nc convex sets, the proofs in \cite{KS2019} will apply essentially verbatim. 

The next definition is a slight generalization of \cite{KS2019}*{Definition 12.1} and \cite{KS2019}*{Definition 12.2}.

\begin{definition}
\mbox{}
\begin{enumerate}
	\item An {\em nc dynamical system} is a triple $(S,G,\sigma)$ consisting of an operator system $S$, a locally compact group $G$ and a group homomorphism $\sigma : G \to \Aut(S)$ with the property that the orbit map $G \to S : g \to \sigma_g(s)$ is continuous for all $s \in S$.
	\item A {\em affine nc dynamical system} is a triple $(K,G,\kappa)$ consisting of a compact nc convex set $K$, a locally compact group $G$ and a group homomorphism $\kappa : G \to \Aut(K)$ with the property that for each $n$, the orbit map $G \to K_n : g \to \kappa_g(x)$ is continuous for all $x \in K_n$.
\end{enumerate}
\end{definition}

\begin{remark}
Unless we need to refer to $\sigma$, we will write $(S,G)$ for $(S,G,\sigma)$ and $g s$ for $\sigma_g(s)$. Similarly, unless we need to refer to $\kappa$, we will write $(K,G)$ for $(K,G,\kappa)$ and $g x$ for $\kappa_g(x)$. If $S$ is a C*-algebra, say $A$, then we will refer to $(A,G)$ as a {\em C*-dynamical system}.
\end{remark}

We will utilize the fact that if $(K,z)$ is a pointed compact nc convex set and $(\rmA(K,z),G)$ is an nc dynamical system, then the dual equivalence from Theorem \ref{thm:categorical-equivalence} gives rise to an affine nc dynamical system $(K,G)$, determined by
\[
a(\kappa_g(x)) = \sigma_{g^{-1}}(a)(x), \quad \text{for} \quad a \in \rmA(K),\ g \in G \text{ and } x \in K.
\]

It seems worth pointing out that an nc dynamical system over an operator system lifts to an nc dynamical system on its unitization.

\begin{lemma}
	Let $(S,G,\sigma)$ be an nc dynamical system. Define $\sigma^\sharp : G \to \Aut(S)$ by $(\sigma^\sharp)_g = (\sigma_g)^\sharp$. Then $(S^\sharp,G,\sigma^\sharp)$ is an nc dynamical system.
\end{lemma}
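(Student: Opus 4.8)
The plan is to verify, one at a time, the three defining requirements of an nc dynamical system for the triple $(S^\sharp, G, \sigma^\sharp)$, relying throughout on the functoriality properties of the partial unitization recorded in Theorem \ref{thm:partial-unitization}. The lemma is essentially a formal consequence of that theorem, so most of the work is bookkeeping; the only genuinely substantive point is matching up the topology.

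First I would confirm that each $(\sigma^\sharp)_g$ really lands in $\Aut(S^\sharp)$. Since $\sigma$ takes values in $\Aut(S)$, every $\sigma_g$ is a completely isometric complete order isomorphism of $S$, so Theorem \ref{thm:partial-unitization}(3) applies directly and shows that the unitization $(\sigma_g)^\sharp$ is a unital complete order isomorphism of $S^\sharp$. Because $S^\sharp$ is a unital operator system, such a map is automatically an isomorphism in $\OpSys$, hence an automorphism of $S^\sharp$.

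Next I would establish that $\sigma^\sharp$ is a group homomorphism. This reduces to functoriality of the unitization, which is immediate from the explicit formula $\phi^\sharp(x,\alpha) = (\phi(x),\alpha)$: for composable completely contractive completely positive maps one reads off $(\phi \circ \psi)^\sharp = \phi^\sharp \circ \psi^\sharp$ and $(\id_S)^\sharp = \id_{S^\sharp}$. Applying the first identity to $\sigma_{gh} = \sigma_g \circ \sigma_h$ gives $(\sigma^\sharp)_{gh} = (\sigma^\sharp)_g \circ (\sigma^\sharp)_h$, and the second identity shows $(\sigma^\sharp)_{g^{-1}}$ is inverse to $(\sigma^\sharp)_g$.

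Finally I would treat the continuity of the orbit maps, which is where the one subtlety lies. For fixed $(s,\alpha) \in S^\sharp$ we have $(\sigma^\sharp)_g(s,\alpha) = (\sigma_g(s),\alpha)$, so the scalar coordinate is constant in $g$ while the $S$-coordinate is the map $g \mapsto \sigma_g(s)$, which is norm-continuous by hypothesis. The point requiring attention—and the closest thing to an obstacle—is that continuity must be measured in the norm of $S^\sharp$, not merely coordinatewise. However, since $S$ is an operator system the canonical inclusion $\iota : S \to S^\sharp$ is completely isometric, and the split exact sequence of Theorem \ref{thm:partial-unitization}(2) exhibits $S^\sharp = S \oplus \C$ as a topological direct sum with bounded projections onto each summand. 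Consequently the norm topology on $S^\sharp$ coincides with the product topology, and continuity of the orbit map for $\sigma^\sharp$ follows from continuity in each coordinate. This would complete the verification.
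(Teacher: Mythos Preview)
Your proof is correct and follows essentially the same approach as the paper, which simply writes down the formula $(\sigma_g)^\sharp(s,\alpha) = (\sigma_g(s),\alpha)$ and declares both the homomorphism property and continuity immediate. Your version is considerably more detailed; in particular, the continuity step can be shortened by observing that the difference $(\sigma^\sharp)_g(s,\alpha) - (\sigma^\sharp)_h(s,\alpha) = \iota(\sigma_g(s) - \sigma_h(s))$ lies in the isometric image of $S$, so its $S^\sharp$-norm equals $\|\sigma_g(s) - \sigma_h(s)\|_S$ directly, without invoking the product-topology argument.
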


\begin{proof}
For $g \in G$, $(\sigma_g)^\sharp (s,\alpha) = (\sigma_g(s),\alpha)$ for $s \in S^\sharp$. It follows immediately that  $\sigma^\sharp : G \to \Aut(S^\sharp)$ is a group homomorphism and that the corresponding orbit maps are continuous.
\end{proof}

Let $G$ be a locally compact group. Recall that a {\em continuous} unitary representation of $G$ on a Hilbert space $H$ is a group homomorphism $\rho : G \to \mathcal{U}(H)$ such that the orbit map $G \to H: g \to \rho(g) \xi$ is continuous for every $\xi \in H$. Here $\mathcal{U}(H)$ denotes the set of unitary operators on $H$. 

The next result follows immediately from \cite{KS2019}*{Theorem 12.12}, since we can view the action of a non-discrete locally compact group as an action by its discretization.

\begin{theorem} \label{thm:fixed-point-set-nc-simplex}
Let $(K,G)$ be an affine nc dynamical system such that $K$ is an nc simplex. Then the fixed point set
\[
K^G = \{x \in K : gx = x \text{ for all } g \in G \}
\]
is an nc simplex. 
\end{theorem}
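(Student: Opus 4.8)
The plan is to reduce the statement to the case of a discrete group, where it has already been established in \cite{KS2019}*{Theorem 12.12}, and then to observe that nothing is lost in the reduction because the fixed point set is insensitive to the topology on $G$. Thus the genuine work — that the fixed point set of an affine action of a discrete group on an nc simplex is again an nc simplex — is imported wholesale from \cite{KS2019}, and what remains is only to check that the passage to the discretization is legitimate.

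First I would record that $K^G$ is itself a compact nc convex set. Each $\kappa_g \in \Aut(K)$ is an affine nc homeomorphism, so it is continuous on each graded component and commutes with the formation of direct sums and nc convex combinations. Consequently the fixed point set of a single $\kappa_g$ is closed and closed under nc convex combinations, and the same properties pass to the intersection
\[
K^G = \bigcap_{g \in G} \{x \in K : \kappa_g(x) = x\}.
\]
Hence $K^G$ is a closed nc convex subset of $K$ and so is a compact nc convex set in its own right. Next I would pass to the discretization: let $G_d$ denote the group $G$ equipped with the discrete topology, and regard $\kappa$ as a homomorphism $\kappa : G_d \to \Aut(K)$. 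Since every map out of a discrete group is continuous, the orbit maps $g \mapsto \kappa_g(x)$ are automatically continuous, so $(K,G_d)$ is again an affine nc dynamical system, and $K$ is still an nc simplex by hypothesis. Crucially, the defining relation $\kappa_g(x) = x$ for all $g$ makes no reference to the topology on $G$, so the two fixed point sets literally coincide, $K^{G_d} = K^G$.

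Finally I would apply \cite{KS2019}*{Theorem 12.12} to the discrete affine nc dynamical system $(K,G_d)$ to conclude that $K^{G_d}$ is an nc simplex, and therefore that $K^G$ is an nc simplex. I expect no substantive obstacle here: the only points requiring verification are that discretizing preserves the structure of an affine nc dynamical system (immediate, since discrete orbit maps are trivially continuous) and that it leaves the fixed point set unchanged (immediate from the definition). In effect, the role of the earlier sections is to have set up the dual equivalence so that dynamical statements about operator systems translate faithfully into statements about their nc quasistate spaces; granting the discrete result of \cite{KS2019}, the present theorem is obtained simply by forgetting the topology on $G$.
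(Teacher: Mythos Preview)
Your proposal is correct and follows exactly the paper's own approach: the paper simply remarks that the result follows immediately from \cite{KS2019}*{Theorem 12.12} by viewing the action of the locally compact group as an action of its discretization. Your write-up spells out the easy verifications (that $K^G$ is a compact nc convex set and that $K^{G_d} = K^G$) more explicitly than the paper does, but there is no difference in strategy.
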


\begin{corollary} \label{cor:fixed-point-set-nc-simplex}
Let $(A,G)$ be a C*-algebra and let $(K,z)$ denote the nc quasistate space of $A$. Then the fixed point set $K^G$ is an nc simplex.
\end{corollary}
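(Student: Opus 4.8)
The plan is to obtain the result as a direct consequence of Theorem~\ref{thm:fixed-point-set-nc-simplex}, which asserts that the fixed point set of an affine nc dynamical system whose underlying compact nc convex set is an nc simplex is again an nc simplex. Thus the task reduces to verifying two hypotheses in the present situation: that $K$ is an nc simplex, and that the $G$-action furnishes an affine nc dynamical system $(K,G)$.

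First I would check that $K$ is an nc simplex. Since $A$ is a C*-algebra, Theorem~\ref{thm:c-star-alg-iff-nc-bauer-simplex}, applied to $A$ regarded as an operator system, shows that its nc quasistate space $K$ is an nc Bauer simplex with $z \in \partial K$. As every nc Bauer simplex is in particular an nc simplex, this hypothesis holds.

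Next I would produce the affine nc dynamical system. Identifying $A$ with $\rmA(K,z)$ via Theorem~\ref{thm:duality-theorem}, the C*-dynamical system $(A,G,\sigma)$ becomes an nc dynamical system on $\rmA(K,z)$. By the dual equivalence of Theorem~\ref{thm:categorical-equivalence}, as recorded in the discussion preceding Theorem~\ref{thm:fixed-point-set-nc-simplex}, this transposes to an affine nc dynamical system $(K,G,\kappa)$ with $\kappa_g$ determined by $a(\kappa_g(x)) = \sigma_{g^{-1}}(a)(x)$ for $a \in \rmA(K)$, $g \in G$, and $x \in K$; the orbit maps $g \mapsto \kappa_g(x)$ on each $K_n$ inherit continuity from those of $\sigma$ through this relation, since the topology on $K_n$ is determined by the evaluations $x \mapsto a(x)$.

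With both hypotheses in place, Theorem~\ref{thm:fixed-point-set-nc-simplex} immediately yields that $K^G$ is an nc simplex. I do not expect a genuine obstacle, as the substantive work already resides in Theorem~\ref{thm:c-star-alg-iff-nc-bauer-simplex} and Theorem~\ref{thm:fixed-point-set-nc-simplex}; the only point meriting care is confirming that the transposed action is truly an affine nc dynamical system with continuous orbit maps in the correct topology on each graded component, which is guaranteed by the dual-equivalence framework already established.
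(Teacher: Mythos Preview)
Your proposal is correct and follows essentially the same approach as the paper. The paper's proof is simply ``By Theorem~\ref{thm:c-star-alg-iff-nc-bauer-simplex}, $K$ is an nc Bauer simplex, so the result follows immediately from Theorem~\ref{thm:fixed-point-set-nc-simplex}''; you have merely made explicit the passage from the C*-dynamical system to the affine nc dynamical system $(K,G)$, which the paper leaves implicit since it was already described in the discussion preceding Theorem~\ref{thm:fixed-point-set-nc-simplex}.
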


\begin{proof}
By Theorem \ref{thm:c-star-alg-iff-nc-bauer-simplex}, $K$ is an nc Bauer simplex, so the result follows immediately from Theorem \ref{thm:fixed-point-set-nc-simplex}.	
\end{proof}

\begin{definition}
Let $G$ be a second countable locally compact group.
\begin{enumerate}
\item A continuous unitary representation $\rho : G \to \mathcal{U}(H)$ is said to have {\em almost invariant vectors} if there is a net of unit vectors $\{\xi_i \in H\}$ such that for every compact subset $C \subseteq G$,
\[
\lim_i \sup_{g \in C} \|\rho(g)\xi_i - \xi_i\| = 0.
\]
\item The group $G$ is said to have {\em Kazhdan's property (T)} if every unitary representation of $G$ with almost invariant vectors has a nonzero invariant vector. 
\end{enumerate}
\end{definition}

The next result is a generalization for (potentially nonunital) C*-algebras and second countable locally compact groups of \cite{KS2019}*{Theorem 14.2}.

\begin{theorem} \label{thm:property-t}
Let $A$ be a C*-algebra with nc quasistate space $(K,z)$ and let $G$ be a second countable locally compact group with Kazhdan's property (T) such that $(A,G)$ is a C*-dynamical system. The set $K^G$ of invariant nc quasistates on $A$ is an nc Bauer simplex. If $A$ is unital, then the result also holds for the nc state space of $A$ instead of its nc quasistate space.
\end{theorem}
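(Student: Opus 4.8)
The plan is to reduce to the unital case and then run the argument of \cite{KS2019}*{Theorem 14.2}, the only genuine change being that the discrete ``finite subset'' approximation must be replaced by a continuity argument over compact subsets of $G$. First I would eliminate the nonunital case by unitization. Since $A$ is a C*-algebra, Lemma \ref{lem:c-star-algebra-iff-unitization-is} shows that $z \in \partial K$ and that $A^\sharp$ is a unital C*-algebra whose nc state space is $K$ (Proposition \ref{prop:nc-state-space-unitization}). The action lifts to $(A^\sharp, G, \sigma^\sharp)$, and since $\sigma^\sharp$ fixes the scalar summand, the induced affine action on $K$ agrees on fixed points with the dual action of $(A,G)$; hence the invariant quasistate set $K^G$ of $A$ is exactly the invariant state set of the unital C*-algebra $A^\sharp$. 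As being an nc Bauer simplex is an intrinsic property of the nc convex set $K^G$, it suffices to prove the statement for nc state spaces of unital C*-algebras and then apply it to $A^\sharp$.

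So assume $A$ is unital. By Corollary \ref{cor:fixed-point-set-nc-simplex} the set $K^G$ is already an nc simplex, so the entire content is the Bauer property, namely that $\partial(K^G)$ is closed in the spectral topology on $\Irr(K^G)$. Following \cite{KS2019}, the extreme points of $K^G$ are the \emph{ergodic} invariant nc states: writing an invariant $x \in K^G$ as a $G$-covariant ucp map and passing to its minimal dilation, which carries a continuous unitary representation $u$ of $G$, extremality of $x$ is equivalent to the absence of nonzero $G$-invariant vectors in the associated ``mean-zero'' subspace. The spectral topology is controlled by the minimal C*-cover of $\rmA(K^G)$, and convergence $x_i \to x$ of extreme points in this topology corresponds to convergence of the attached covariant data.

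The key step, where property (T) is used, is to show this extreme boundary is closed, and I would argue by contradiction. Suppose $x_i \in \partial(K^G)$ with $x_i \to x \in K^G \setminus \partial(K^G)$. Then the dilation of $x$ has a nonzero $G$-invariant vector $\xi$ in its mean-zero subspace. Using the continuity of the orbit maps $g \mapsto \sigma_g(a)$ together with the convergence $x_i \to x$, I would transfer $\xi$ to vectors $\xi_i$ in the mean-zero subspaces of the dilations of $x_i$; the continuity of each representation $u^{(i)}$ then gives, for a fixed Kazhdan pair $(Q,\epsilon)$ with $Q \subseteq G$ compact, that $\sup_{g \in Q} \|u^{(i)}_g \xi_i - \xi_i\| \to 0$, so the $\xi_i$ are almost invariant. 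But each $x_i$ is ergodic, so these representations have no nonzero invariant vector, contradicting property (T). Therefore $x \in \partial(K^G)$, and $K^G$ is an nc Bauer simplex.

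I expect the main obstacle to be precisely this transfer step. In \cite{KS2019} the group is discrete and almost invariance is tested on finite sets, so the approximation is elementary; here one must produce the $\xi_i$ uniformly over a compact Kazhdan set $Q$, which forces genuine use of the continuity hypotheses built into the definitions of an nc dynamical system and of a continuous unitary representation. Concretely, the difficulty is upgrading the weak* convergence $x_i \to x$ of states to a uniform-on-$Q$ estimate on the matrix coefficients of the $u^{(i)}$, and this is the one place where ``second countable locally compact with property (T)'' is used in an essential, non-discretizable way. Once this continuity bookkeeping is arranged, the remainder of the proof is formally identical to the discrete case treated in \cite{KS2019}.
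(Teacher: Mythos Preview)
Your proposal is correct and follows essentially the same route as the paper: reduce to \cite{KS2019}*{Theorem 14.2} and handle the passage from discrete to locally compact groups via continuity. The paper's proof is more terse and locates the one new ingredient more narrowly than you do: it says the only thing to verify beyond \cite{KS2019} is that the unitary representation produced in the dilation theorem for invariant nc states, \cite{KS2019}*{Lemma 12.6}, is continuous, and observes that this is an easy consequence of the continuity of the orbit maps $g \mapsto \sigma_g(a)$. Once that representation is continuous, the ``transfer step'' you flag as the main obstacle---upgrading finite-set almost invariance to compact-set almost invariance---is automatic by a routine compactness argument, so you are somewhat overestimating the difficulty there. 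Your explicit reduction to the unital case via $A^\sharp$ is not spelled out in the paper, but it is a clean way to make ``works essentially verbatim'' precise and is entirely consistent with the paper's intent.
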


\begin{proof}
The proof of \cite{KS2019}*{Theorem 14.2} works essentially verbatim here. If $G$ is non-discrete, then it is necessary to verify that the unitary representation constructed in the proof of the dilation theorem for invariant nc states \cite{KS2019}*{Lemma 12.6} is continuous. However, this is an easy consequence of the continuity of the orbit maps.
\end{proof}

The following corollary extends a result of Glasner and Weiss for commutative C*-algebras (see \cite{GW1997}*{Theorem 1'} and \cite{GW1997}*{Theorem 2'}).

\begin{corollary}
Let $G$ be a second countable locally compact group. Then $G$ has Kazhdan's property (T) if and only if whenever $A$ is a C*-algebra with nc quasistate space $(K,z)$ and $(A,G)$ is a C*-dynamical system, then the set $K_1^G$ of invariant quasistates is pointedly affinely homeomorphic to the quasistate space of a C*-algebra. If $A$ is unital, then the result also holds with the quasistate space of $A$ replaced by its state space.
\end{corollary}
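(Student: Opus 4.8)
The plan is to obtain the corollary as the degree-one shadow of the nc-level dichotomy already packaged in Theorem \ref{thm:property-t}, together with the characterization of C*-algebras in Theorem \ref{thm:c-star-alg-iff-nc-bauer-simplex}. Fix a C*-dynamical system $(A,G)$ and let $(K,z)$ be the nc quasistate space of $A$. Since $A$ is a C*-algebra, Theorem \ref{thm:c-star-alg-iff-nc-bauer-simplex} shows that $K$ is an nc Bauer simplex with $z \in \partial K$; in particular $K$ is an nc simplex, so Corollary \ref{cor:fixed-point-set-nc-simplex} gives that the fixed point set $K^G$ is an nc simplex. The zero map is invariant, so $z \in K^G$, and since extremeness is inherited by nc convex subsets, $z \in \partial(K^G)$. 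The whole argument then rests on the following bridge between the nc level and degree one: the fixed point set $K^G$ is an nc Bauer simplex if and only if $K_1^G$ is pointedly affinely homeomorphic to the quasistate space of a C*-algebra.

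For the forward half of this bridge, suppose $K^G$ is an nc Bauer simplex. Then $\rmA(K^G)$ is a unital C*-algebra by \cite{KS2019}*{Theorem 10.5}, and since $z \in \partial(K^G)$ the point $z$ corresponds to a character of $\rmA(K^G)$ whose kernel is exactly $\rmA(K^G,z)$. Hence $\rmA(K^G,z)$ is a C*-ideal, and one checks $\rmA(K^G,z)^\sharp = \rmA(K^G)$, so by Corollary \ref{cor:pointed-conditions} the pair $(K^G,z)$ is a pointed compact nc convex set whose operator system $\rmA(K^G,z)$ is a C*-algebra with nc quasistate space $(K^G,z)$. Reading off the first level, $K_1^G$ is precisely the quasistate space of this C*-algebra. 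Combined with Theorem \ref{thm:property-t}, this already yields the ``only if'' direction of the corollary: if $G$ has property (T) then every $K^G$ is an nc Bauer simplex, hence every $K_1^G$ is the quasistate space of a C*-algebra.

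The ``if'' direction is the contrapositive, and here the main obstacle lies in the reverse half of the bridge, where one only has degree-one data. Rather than recover the full nc structure, I would follow the strategy that already underlies \cite{GW1997} and \cite{KS2019}: if $G$ does not have property (T), produce the counterexample on a \emph{commutative} C*-algebra $A = C_0(X)$, so that $K_1^G$ is the classical simplex of invariant measures, which fails to be a Bauer simplex exactly because property (T) fails. The delicate point to verify is then that such a non-Bauer simplex cannot be the quasistate space of any C*-algebra: this uses the classification fact that a C*-algebra whose (quasi)state space is a Choquet simplex must be commutative, and the (quasi)state space of a commutative C*-algebra is a Bauer simplex. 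Thus the non-Bauer simplex $K_1^G$ witnesses the failure of the condition, completing the equivalence.

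Finally, the unital statement follows by the same argument with nc quasistate spaces replaced by nc state spaces: one invokes \cite{KS2019}*{Theorem 10.5} in place of Theorem \ref{thm:c-star-alg-iff-nc-bauer-simplex}, drops the auxiliary point $z$ and the condition $z \in \partial K$ throughout, and uses the unital versions of Theorem \ref{thm:property-t} and of the Glasner--Weiss construction. The reverse bridge remains the only genuine difficulty and is handled identically, with state spaces in place of quasistate spaces.
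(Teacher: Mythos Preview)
Your proposal is correct and follows essentially the same route as the paper: for the forward direction you invoke Theorem~\ref{thm:property-t} to get that $K^G$ is an nc Bauer simplex, then pass through Theorem~\ref{thm:c-star-alg-iff-nc-bauer-simplex} (equivalently \cite{KS2019}*{Theorem 10.5}) to realize $K_1^G$ as a quasistate space; for the converse you appeal to the Glasner--Weiss commutative counterexample and the fact that a C*-algebra whose quasistate space is a simplex must be commutative with a Bauer-simplex quasistate space. Your explicit attention to why $z$ stays extreme in $K^G$ and why $(K^G,z)$ is pointed are useful clarifications of steps the paper passes over quickly, but the overall architecture is the same.
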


\begin{proof}
If $G$ has Kazhdan's property (T), then Theorem \ref{thm:property-t} implies that $K^G$ is an nc Bauer simplex. By Lemma \ref{lem:c-star-algebra-iff-unitization-is}, $z \in \partial K$. Hence by Theorem \ref{thm:c-star-alg-iff-nc-bauer-simplex}, $(K^G,z)$ is pointedly affinely homeomorphic to the nc quasistate space of a C*-algebra. In particular, the set $K_1^G$ of invariant quasistates of $A$ is pointedly affinely homeomorphic to the quasistate space of a C*-algebra.

Conversely, if $G$ does not have Kazhdan's property (T), then it follows from \cite{GW1997}*{Theorem 2'} that there is a compact Hausdorff space $X$ and a commutative C*-dynamical system $(\rmC(X),G)$ such that the space $\operatorname{Prob}(X)^G$ of invariant probability measures on $X$ is a Poulsen simplex. Equivalently, the set $\partial (\operatorname{Prob}(X)^G)$ of extreme points of $\operatorname{Prob}(X)^G$ is not closed. 

We need to translate this to a statement about the quasistate space $Q$ of $\rmC(X)$. Since $Q$ is a compact convex set, the set $Q^G$ of invariant quasistates is a simplex (see e.g.\ \cite{KS2019}*{Corollary 12.13}). Note that $\operatorname{Prob}(X) \subseteq Q$. In fact, $Q$ is the closed convex hull of $\operatorname{Prob}(X) \cup \{z\}$, where $z$ denotes the zero map on $\rmC(X)$. For nonzero $\mu \in \partial (Q^G)$, since $\mu(X)^{-1} \mu \in Q^G$, it follows that $\mu(X) = 1$. Hence $\mu \in \partial (\operatorname{Prob}(X)^G)$. On the other hand, it is clear that $\partial (\operatorname{Prob}(X)^G) \subseteq \partial (Q^G)$. Hence $\partial (Q^G) \subseteq \partial (\operatorname{Prob}(X)^G) \cup \{z\}$. Since $\partial (\operatorname{Prob}(X)^G)$ is not closed and $z$ is isolated from $\operatorname{Prob}(X)$, it follows that $\partial (Q^G)$ is not closed. Therefore, $Q^G$ is not a Bauer simplex.

The result now follows from the fact that if the quasistate space of a C*-algebra (equivalently, the state space of its unitization) is a simplex, then the C*-algebra is commutative and its quasistate space is a Bauer simplex (see e.g.\ Theorem \ref{thm:c-star-alg-iff-nc-bauer-simplex}). 
\end{proof}

%%%
% Bibliography
%%%

\end{document}